\DeclareMathOperator{\ad}{ad}
\DeclareMathOperator{\Id}{Id}
\DeclareMathOperator{\diag}{diag}
\DeclareMathOperator{\Ad}{Ad}
\DeclareMathOperator{\Lin}{Lin}
\renewenvironment{proof}[1][Proof]{\textbf{#1.} }
{\ \rule{0.5em}{0.5em}}
\newtheorem{theorem}{Theorem}
\newtheorem{prop}{Proposition}
\newtheorem{lemma}{Lemma}
\newtheorem{corollary}{Corollary}
\theoremstyle{definition}
\newtheorem{remark}{Remark}
\begin{document}

\title
[Geodesic orbit Riemannian spaces with two isotropy summands \dots] {Geodesic orbit Riemannian spaces \\ with two isotropy summands. I}

%\author{Zhiqi~Chen, Yu.G.~Nikonorov}

\author{Zhiqi~Chen}
\address{Zhiqi~Chen\newline
School of Mathematical Sciences and LPMC, Nankai University, \newline
Tianjin 300071, China}
\email{chenzhiqi@nankai.edu.cn}

\author{Yuri\u{i}~Nikonorov}
\address{Yu.\,G. Nikonorov \newline
Southern Mathematical Institute of Vladikavkaz Scientific Centre \newline
of the Russian Academy of Sciences, Vladikavkaz, Markus st. 22, \newline
362027, Russia}
\email{nikonorov2006@mail.ru}

\begin{abstract}
The paper is devoted to the study of geodesic orbit Riemannian spaces that could be characterize by the property that
any geodesic is an orbit of a 1-parameter group of isometries.
The main result is the classification of compact simply connected geodesic orbit Riemannian spaces $G/H$ with
two irreducible submodules in the isotropy representation.

\vspace{2mm} \noindent 2010 Mathematical Subject Classification:
53C20, 53C25, 53C35.

\vspace{2mm} \noindent Key words and phrases: homogeneous Riemannian manifolds, geodesic orbit
spaces, normal homogeneous spaces, naturally reductive spaces, weakly symmetric spaces.
\end{abstract}

\maketitle

\section{Introduction and the main results}

A Riemannian manifold $(M,g)$ is called {\it a  manifold with
homogeneous geodesics or a geodesic orbit manifold} (shortly,  {\it GO-manifold}) if any
geodesic $\gamma $ of $M$ is an orbit of a 1-parameter subgroup of
the full isometry group of $(M,g)$. A Riemannian manifold $(M=G/H,g)$, where $H$ is a compact subgroup
of a Lie group $G$ and $g$ is a $G$-invariant Riemannian metric,
is called {\it a space with homogeneous geodesics or a geodesic orbit space}
(shortly,  {\it GO-space}) if any geodesic $\gamma $ of $M$ is an orbit of a
1-parameter subgroup of the group $G$. All manifolds in this paper are supposed to be connected.
Hence, a Riemannian manifold $(M,g)$ is  a geodesic orbit Riemannian manifold,
if it is a geodesic orbit space with respect to its full connected isometry group. This terminology was introduced in
\cite{KV} by O.~Kowalski and L.~Vanhecke, who initiated a systematic study on such spaces.
In the same paper, O.~Kowalski and L.~Vanhecke classified all GO-spaces
of dimension $\leq 6$. One can find many interesting results  about  GO-manifolds
and its subclasses in \cite{AA, AN, AV, BerNik,
BerNik3, BerNikClif, DuKoNi, Gor96, Tam, Ta}, and in the references
therein.
\smallskip

All homogeneous spaces in this paper are assumed to be almost effective unless otherwise stated.
Let $(G/H, g)$ be a homogeneous Riemannian space. It is well known that there is an $\Ad(H)$-invariant decomposition
\begin{equation}\label{reductivedecomposition}
\mathfrak{g}=\mathfrak{h}\oplus \mathfrak{m},
\end{equation}
where $\mathfrak{g}={\rm Lie }(G)$ and $\mathfrak{h}={\rm Lie}(H)$.
The Riemannian metric $g$ is $G$-invariant and is determined
by an $\Ad(H)$-invariant Euclidean metric $g = (\cdot,\cdot)$ on
the space $\mathfrak{m}$ which is identified with the tangent
space $T_oM$ at the initial point $o = eH$. By $[\cdot, \cdot]$ we denote the Lie bracket in $\mathfrak{g}$, and by
$[\cdot, \cdot]_{\mathfrak{m}}$ its $\mathfrak{m}$-component according to (\ref{reductivedecomposition}). The following is (in the above terms) a well-known criteria of GO-spaces, see other details and useful facts in \cite{Nik2016}.

\begin{lemma}[\cite{KV}]\label{GO-criterion}
A homogeneous Riemannian space   $(G/H,g)$ with the reductive
decomposition  {\rm(\ref{reductivedecomposition})} is a GO-space if and
only if  for any $X \in \mathfrak{m}$ there is $Z \in \mathfrak{h}$ such that
$$([X+Z,Y]_{\mathfrak{m}},X) =0  \text{ for all } Y\in \mathfrak{m}.$$
\end{lemma}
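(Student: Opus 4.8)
The plan is to reduce the assertion to the classical ``geodesic lemma'', which describes exactly which orbits of one-parameter subgroups of $G$ are geodesics, and then to read off the stated criterion.

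First I would reduce to geodesics through the base point $o=eH$: if $\gamma$ is an arbitrary geodesic with $\gamma(0)=a\cdot o$, then $a^{-1}\cdot\gamma$ is again a geodesic through $o$, and it is an orbit of a one-parameter subgroup of $G$ exactly when $\gamma$ is. A geodesic through $o$ is determined by its initial velocity $X\in\mathfrak{m}\cong T_oM$, and the only possible homogeneous geodesics through $o$ are the curves $\gamma_W(t):=\exp(tW)\cdot o$ with $W\in\mathfrak{g}$ (any orbit passing through $o$ can be reparametrized to this form). Since $\gamma_W'(0)$ equals the $\mathfrak{m}$-component $W_{\mathfrak{m}}$ of $W$, the orbit $\gamma_W$ has initial velocity $X$ precisely when $W=X+Z$ with $Z\in\mathfrak{h}$. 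Hence $(G/H,g)$ is a GO-space if and only if for every $X\in\mathfrak{m}$ there is $Z\in\mathfrak{h}$ such that $\gamma_{X+Z}$ is a geodesic, and it remains to characterize when this happens.

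For that I would establish the geodesic lemma: $\gamma_W$ is a geodesic of $(G/H,g)$ if and only if $([W,Y]_{\mathfrak{m}},W_{\mathfrak{m}})=0$ for all $Y\in\mathfrak{m}$. Let $W^{*}$ be the Killing vector field on $M$ generated by $W$; then $\gamma_W$ is the integral curve of $W^{*}$ through $o$, so $\nabla_{\gamma_W'}\gamma_W'=(\nabla_{W^{*}}W^{*})\circ\gamma_W$, where $\nabla$ is the Levi-Civita connection. For any Killing field $K$ one has $\nabla_{K}K=-\tfrac12\operatorname{grad}\|K\|^{2}$; moreover, since the flow $(\exp(sW))$ of $W^{*}$ commutes with every $\exp(tW)$, the isometries $\exp(tW)$ preserve the function $\|W^{*}\|^{2}$ and hence its gradient, so $\operatorname{grad}\|W^{*}\|^{2}$ vanishes along $\gamma_W$ as soon as it vanishes at $o$. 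Finally, transporting $W^{*}$ back to $o$ along $s\mapsto\exp(sV)\cdot o$ by the isometry $\exp(-sV)$ gives $\|W^{*}(\exp(sV)\cdot o)\|^{2}=\|(\Ad(\exp(-sV))W)_{\mathfrak{m}}\|^{2}$, and differentiating at $s=0$ yields $d(\|W^{*}\|^{2})_o(V)=2([W,V]_{\mathfrak{m}},W_{\mathfrak{m}})$ for $V\in\mathfrak{m}$; this proves the lemma.

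Combining the two steps, $\gamma_{X+Z}$ is a geodesic if and only if $([X+Z,Y]_{\mathfrak{m}},X)=0$ for all $Y\in\mathfrak{m}$, which is precisely the asserted condition. (It is worth noting that enlarging the range of $Y$ from $\mathfrak{m}$ to $\mathfrak{g}$ would change nothing, since for $Y\in\mathfrak{h}$ one has $([X+Z,Y]_{\mathfrak{m}},X)=([X,Y],X)=0$ automatically by $\Ad(H)$-invariance of $(\cdot,\cdot)$.) I expect the main obstacle to be the geodesic lemma itself: namely, justifying that the geodesic equation need only be verified at the single point $o$, and carrying out the gradient computation; once the homogeneous structure is exploited through the commuting one-parameter groups, what remains is routine bookkeeping.
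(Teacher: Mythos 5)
The paper does not give a proof of this lemma; it is stated as a citation to Kowalski--Vanhecke \cite{KV}, so there is no internal argument to compare against. Your proof is correct and is, in essence, the classical one: reduce by transitivity to geodesics through $o$, observe that a homogeneous geodesic through $o$ must be $\gamma_W(t)=\exp(tW)\cdot o$ with $W_{\mathfrak m}$ equal to the prescribed initial velocity $X$ (after the harmless affine reparametrization that matches constant speeds, so $W=X+Z$ with $Z\in\mathfrak h$), and then characterize which $\gamma_W$ are geodesics using the Killing-field identity $\nabla_K K=-\tfrac12\operatorname{grad}\|K\|^2$. Your observation that the flow $\exp(tW)$ preserves $\|W^*\|^2$, so the geodesic equation need only be checked at $o$, is exactly the right reduction, and the computation
$d\bigl(\|W^*\|^2\bigr)_o(V)=2\bigl([W,V]_{\mathfrak m},W_{\mathfrak m}\bigr)$
(obtained by differentiating $\Ad(\exp(-sV))W$ at $s=0$) is carried out correctly for $V\in\mathfrak m\cong T_oM$. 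The closing remark that nothing changes if $Y$ ranges over all of $\mathfrak g$ is also right, by $\Ad(H)$-invariance of $(\cdot,\cdot)$. I see no genuine gap; the only point one might spell out further is why matching initial velocities costs nothing (both curves have constant speed, so the reparametrization is affine), but you indicate awareness of this.
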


There are some important subclasses of geodesic orbit manifolds.
Indeed, GO-spaces  may be considered as  a natural generalization of Riemannian symmetric spaces.
On the other hand, the class of GO-spaces is much larger than the class of symmetric spaces.
Any homogeneous space
$M = G/H$ of a compact Lie group $G$  admits a Riemannian metric $g$ such that $(M,g)$ is a GO-space.
It suffices to take the metric
$g$ induced by a biinvariant Riemannian metric $g_0$ on the Lie group  $G$ such that
$ (G,g_0) \to (M=G/H, g)$ is a Riemannian submersion
with totally geodesic fibres. Such geodesic orbit space $(M = G/H, g)$ is called  a {\it normal homogeneous space}.
\smallskip

It should be noted also that  any  naturally  reductive  Riemannian manifold is geodesic orbit. Recall that a Riemannian manifold $(M,g)$ is
{\it naturally reductive} if it admits a transitive Lie group $G$ of isometries with a biinvariant pseudo-Riemannian (non necessarily Riemannian) metric $g_0$,
which induces the metric $g$ on $M = G/H$ (see  \cite{Bes} and \cite{KN}). Clearly symmetric spaces and normal homogeneous spaces are naturally reductive.
The first example of non-naturally reductive GO-manifolds had been constructed by A.~Kaplan \cite{Kap}. In \cite{KV}, O.~Kowalski and L.~Vanhecke
classified all geodesic orbit spaces of dimension $\leq 6$. In particular, they proved that every GO-manifold of dimension $\leq 5$ is naturally reductive.
\smallskip

Another important class of GO-spaces consist of weakly symmetric spaces introduced by A.~Selberg \cite{S}.
A homogeneous Riemannian manifold  $(M = G/H, g)$ is a {\it  weakly symmetric space}
if any two points $p,q \in M$ can be interchanged by
an isometry $a \in G$. This  property does not depend on the particular $G$-invariant metric $g$.
Note that weakly symmetric Riemannian manifolds are geodesic orbit by a result of J.~Berndt, O.~Kowalski, and L.~Vanhecke \cite{BKV}.
Weakly symmetric spaces have many interesting properties and are closely related  with spherical spaces, commutative
spaces, and Gelfand pairs etc (see papers \cite{AV, Yakimova} and book \cite{W1} by J.A.~Wolf).
The classification of weakly symmetric reductive homogeneous  Riemannian spaces was given by O.S.~Yakimova \cite{Yakimova} based on the paper \cite{AV}
(see also \cite{W1}).
\smallskip

{\it Generalized normal homogeneous Riemannian manifolds}
({\it $\delta$-homogeneous manifold}, in another terminology) constitute another important
subclass of geodesic orbit manifolds.
All metrics from this subclass are of non-negative sectional curvature and have some other interesting properties
(see details in \cite{BerNik, BerNik3, BerNik2012}).
In the paper \cite{BerNik2012}, the classification of  generalized normal
homogeneous metrics on spheres and projective spaces is obtained.
Finally, we notice  that {\it Clifford--Wolf homogeneous
Riemannian manifolds} constitute a partial subclass of generalized normal homogeneous Riemannian manifolds \cite{BerNikClif}.
\smallskip

Include the above examples, every isotropy irreducible Riemannian space is naturally reductive, and hence geodesic orbit, see e.g.~\cite{Bes}.
A natural problem is to {\it classify geodesic orbit Riemannian spaces $(G/H, \rho)$ such that the isotropy representation
$\chi:H \rightarrow O(\mathfrak{m})$, $\chi(a)=\Ad(a)|_{\mathfrak{m}}$ has exactly two irreducible components}. This paper is to solve the above problem when
{\it $G/H$ is compact and simply connected}. The general problem need special tools and will be considered in the next paper.
Indeed, the property to be geodesic orbit is related to classes of locally isomorphic homogeneous spaces due to Lemma \ref{GO-criterion}.
First we have the following proposition.
\smallskip

\begin{prop}\label{case5.1p}
Suppose a compact homogeneous space $G/H$ with connected compact $H$ has two irreducible components in the isotropy representation. Then one of the following possibilities holds:
\begin{enumerate}
\item $\mathfrak{g}=\mathfrak{f}\oplus \mathfrak{f}\oplus \mathfrak{f}$ and $\mathfrak{h}=\diag(\mathfrak{f})\subset \mathfrak{g}$
for a compact simple Lie algebra $\mathfrak{f}$;
\item $\mathfrak{g}=\mathfrak{g}_1\oplus \mathfrak{g}_2$ and $\mathfrak{h}=\mathfrak{h}_1\oplus \mathfrak{h}_2$,
where $\mathfrak{h}_i \subset \mathfrak{g}_i$ and the pair $(\mathfrak{g}_i,\mathfrak{h}_i)$ is isotropy irreducible with simple compact Lie algebras
$\mathfrak{g}_i$ for $i=1,2$;
\item $\mathfrak{g}=\mathfrak{f}\oplus \mathfrak{f}\oplus \mathfrak{g}_1$ for simple compact Lie algebras $\mathfrak{g}_1$ and $\mathfrak{f}$,
$\mathfrak{h}=\diag(\mathfrak{f}) \oplus \mathfrak{h}_1$, where $\mathfrak{h}_1 \subset \mathfrak{g}_1$ and the pair $(\mathfrak{g}_1,\mathfrak{h}_1)$
is isotropy irreducible;
\item $\mathfrak{g}=\mathfrak{l}\oplus \mathfrak{k}$, where $\mathfrak{l}$ is a simple compact Lie algebra,
$\mathfrak{k}$ is either a simple compact Lie algebra or $\mathbb{R}$, and there exist
a Lie algebra $\mathfrak{k}_1$ such that
$\mathfrak{k}\oplus \mathfrak{k}_1$ is a subalgebra in $\mathfrak{l}$ such that the pair $(\mathfrak{l}, \mathfrak{k}\oplus \mathfrak{k}_1)$
is isotropy irreducible, whereas $\mathfrak{h}=\diag(\mathfrak{k})\oplus \mathfrak{k}_1 \subset \mathfrak{l}\oplus \mathfrak{k}$;
\item $\mathfrak{g}=\mathbb{R}^2$, $\mathfrak{h}=0$, $G/H=G=S^1\times S^1$;
\item $\mathfrak{g}=\mathbb{R} \oplus \mathfrak{g}_1$, where $\mathfrak{g}_1$ is a semisimple compact Lie algebra,
$\mathfrak{h} \subset \mathfrak{g}_1$ and the pair $(\mathfrak{g}_1,\mathfrak{h})$ is isotropy irreducible for $i=1,2$;
\item $\mathfrak{g}$ is a simple compact Lie algebra.
\end{enumerate}
\end{prop}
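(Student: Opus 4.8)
The plan is to argue entirely on the level of Lie algebras, organising the proof around the splitting of $\mathfrak{g}$ into its centre and its simple ideals, and invoking the known classification of isotropy irreducible compact homogeneous spaces to pin down the remaining freedom. First I would dispose of the centre. Since $G$ is compact, $\mathfrak{g}=\mathfrak{z}(\mathfrak{g})\oplus\mathfrak{s}$ with $\mathfrak{s}=[\mathfrak{g},\mathfrak{g}]$ semisimple and compact. Every subspace of $\mathfrak{z}(\mathfrak{g})$ is an ideal of $\mathfrak{g}$, so almost effectiveness forces $\mathfrak{z}(\mathfrak{g})\cap\mathfrak{h}=0$; replacing $\mathfrak{m}$ by an $\Ad(H)$-invariant complement of $\mathfrak{h}$ containing $\mathfrak{z}(\mathfrak{g})$, which does not change the equivalence class of the isotropy representation, we may assume $\mathfrak{z}(\mathfrak{g})\subseteq\mathfrak{m}$. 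As $\mathfrak{z}(\mathfrak{g})$ is a sum of trivial one-dimensional $\Ad(H)$-submodules, $\dim\mathfrak{z}(\mathfrak{g})\leq 2$. If $\dim\mathfrak{z}(\mathfrak{g})=2$ then $\mathfrak{m}=\mathfrak{z}(\mathfrak{g})$, hence $\mathfrak{s}\subseteq\mathfrak{h}$, which again by almost effectiveness forces $\mathfrak{s}=0$ and we land in possibility~(5). If $\dim\mathfrak{z}(\mathfrak{g})=1$, then $\mathfrak{g}=\mathbb{R}\oplus\mathfrak{g}_1$ with $\mathfrak{g}_1=\mathfrak{s}$ semisimple and one isotropy summand equal to the trivial module $\mathfrak{z}(\mathfrak{g})$; distinguishing whether $\mathfrak{h}\subseteq\mathfrak{g}_1$ or the projection of $\mathfrak{h}$ onto $\mathbb{R}$ is onto leads respectively to possibility~(6) and to possibility~(4) with $\mathfrak{k}=\mathbb{R}$.

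It then remains to treat $\mathfrak{g}$ semisimple, say $\mathfrak{g}=\mathfrak{g}^{(1)}\oplus\cdots\oplus\mathfrak{g}^{(k)}$ with simple ideals $\mathfrak{g}^{(j)}$. Each $\mathfrak{g}^{(j)}$ is $\Ad(H)$-invariant, the projection $p_j\colon\mathfrak{g}\to\mathfrak{g}^{(j)}$ is an $\Ad(H)$-equivariant Lie algebra homomorphism, and almost effectiveness gives $\mathfrak{g}^{(j)}\not\subseteq\mathfrak{h}$ for every $j$. I would study how $\mathfrak{m}_1$ and $\mathfrak{m}_2$ are distributed among the ideals: for each $i,j$ the module $p_j(\mathfrak{m}_i)$ is either $0$ or $\Ad(H)$-isomorphic to $\mathfrak{m}_i$. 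The crucial step is a counting argument. A simple ideal $\mathfrak{g}^{(j)}$ that $\mathfrak{h}$ does not identify diagonally with another ideal contributes, through $\mathfrak{g}^{(j)}\ominus(\mathfrak{h}\cap\mathfrak{g}^{(j)})$, a summand not carried by the other ideals; on the other hand, for simple compact factors Goursat's lemma shows that a subalgebra projecting onto two of them is either block with respect to that pair or contains a diagonal identification of the two. Combining these observations one gets that $k\leq 3$, or $\mathfrak{g}/\mathfrak{h}$ is a product of two isotropy irreducible spaces as in possibility~(2).

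Finally I would enumerate the admissible shapes of $\mathfrak{h}$ relative to the ideal decomposition. For $k=1$ there is nothing further to prove: this is possibility~(7). For $k=2$ either $\mathfrak{h}$ respects the splitting, $\mathfrak{h}=(\mathfrak{h}\cap\mathfrak{g}^{(1)})\oplus(\mathfrak{h}\cap\mathfrak{g}^{(2)})$, in which case both pairs $(\mathfrak{g}^{(j)},\mathfrak{h}\cap\mathfrak{g}^{(j)})$ must be isotropy irreducible, giving possibility~(2); or $\mathfrak{h}$ carries a nontrivial diagonal component, which forces one of the factors, say $\mathfrak{k}=\mathfrak{g}^{(2)}$, to embed into the other, $\mathfrak{l}=\mathfrak{g}^{(1)}$, with $\mathfrak{h}=\diag(\mathfrak{k})\oplus\mathfrak{k}_1$, $\mathfrak{k}\oplus\mathfrak{k}_1\subseteq\mathfrak{l}$ and $(\mathfrak{l},\mathfrak{k}\oplus\mathfrak{k}_1)$ isotropy irreducible, giving possibility~(4). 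For $k=3$ at least two ideals must be tied together diagonally, since otherwise three independent summands appear, and the analysis leaves only $\mathfrak{g}=\mathfrak{f}\oplus\mathfrak{f}\oplus\mathfrak{f}$ with $\mathfrak{h}=\diag(\mathfrak{f})$, which is possibility~(1), or $\mathfrak{g}=\mathfrak{f}\oplus\mathfrak{f}\oplus\mathfrak{g}_1$ with $\mathfrak{h}=\diag(\mathfrak{f})\oplus\mathfrak{h}_1$ and $(\mathfrak{g}_1,\mathfrak{h}_1)$ isotropy irreducible, which is possibility~(3). Throughout one uses repeatedly that a nonzero $\Ad(H)$-equivariant map between irreducible modules is an isomorphism.

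The main obstacle will be the bookkeeping in the semisimple case: establishing the bound on $k$ and, for $k=2,3$, excluding partial diagonal embeddings into three or more ideals at once while keeping exact track of which $\Ad(H)$-submodules are irreducible. A second delicate point is showing that the complementary module $\mathfrak{l}\ominus(\mathfrak{k}\oplus\mathfrak{k}_1)$, respectively $\mathfrak{g}_1\ominus\mathfrak{h}_1$, is $\Ad(H)$-irreducible exactly when the pair $(\mathfrak{l},\mathfrak{k}\oplus\mathfrak{k}_1)$, respectively $(\mathfrak{g}_1,\mathfrak{h}_1)$, is isotropy irreducible; for this one observes that $\mathfrak{h}$ acts on that module through its projection to the relevant ideal and then appeals to the classification of isotropy irreducible spaces. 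This is, in substance, the classification of compact homogeneous spaces with two isotropy summands due to Dickinson and Kerr, which the argument can either reproduce directly or invoke.
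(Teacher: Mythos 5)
Your outline follows the same route as the paper: split off the centre (ruling in cases (5), (6), and (4) with $\mathfrak{k}=\mathbb{R}$), then for semisimple $\mathfrak{g}$ decompose into simple ideals, study the projections $\varphi_j\colon\mathfrak{h}\to\mathfrak{g}^{(j)}$, and count irreducible submodules to constrain the configuration. That is exactly the paper's plan.

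The part you flag as the "main obstacle," however, is precisely the content of the paper's argument and you do not actually carry it out; as written, "combining these observations one gets that $k\leq 3$" and "the analysis leaves only" the two shapes for $k=3$ are assertions, not deductions. The paper makes this counting sharp. Writing $\mathfrak{h}=\mathbb{R}^l\oplus\mathfrak{h}_1\oplus\cdots\oplus\mathfrak{h}_m$, letting $p$ be the number of simple ideals of $\mathfrak{g}$ onto which $\mathfrak{h}$ does not project surjectively, $u=\sum_i\dim\varphi_i(\mathbb{R}^l)$, and $v_i$ the number of simple ideals onto which the factor $\mathfrak{h}_i$ projects nontrivially, one gets the inequality $p+(u-l)+\sum_{i=1}^m(v_i-1)\leq 2$, because the orthogonal complements to $\varphi_j(\mathfrak{h})$ inside $\mathfrak{g}^{(j)}$ for $j\leq p$, the complement of $\mathbb{R}^l$ inside $\bigoplus_i\varphi_i(\mathbb{R}^l)$, and the complement of $\diag(\mathfrak{h}_i)$ inside $\bigoplus_j\varphi_j(\mathfrak{h}_i)$ each produce that many distinct irreducible summands in $\mathfrak{m}$. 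The finite enumeration of nonnegative-integer solutions then \emph{is} the casework leading to (1)--(4) and (7). Note in particular the term $u-l$: it records trivial submodules arising when the centre of $\mathfrak{h}$ spreads diagonally across several simple factors. Your Goursat-flavoured sketch tracks only how the simple factors $\mathfrak{h}_i$ tie ideals of $\mathfrak{g}$ together, and does not account for this contribution; without it you cannot, for instance, exclude an $\mathbb{R}$-centre of $\mathfrak{h}$ embedded diagonally in two ideals alongside the module sources you already list. So: right approach, right casework, but the proof needs the explicit counting inequality (or an equivalent) rather than an appeal to Goursat plus bookkeeping yet to be done.
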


Furthermore we have the following theorem:

\begin{theorem}\label{maimres0}
Assume that $G/H$ is a compact and simply connected homogeneous space with non-simple $G$ and the isotropy representation is the direct sum
of two irreducible representations {\rm(}it corresponds to cases {\rm (1)--(6)}
in Proposition {\rm \ref{case5.1p})}. Then $G/H$, supplied with any $G$-invariant Riemannian metric, is naturally reductive, hence, geodesic orbit.
\end{theorem}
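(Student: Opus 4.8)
The plan is to treat the six cases of Proposition~\ref{case5.1p} one by one, in each case exhibiting an explicit bi-invariant (possibly indefinite) metric $g_0$ on a suitable transitive group $G'$ with stabilizer $H'$ such that the given $G$-invariant metric on $G/H$ is induced by $g_0$ on $G'/H'$; by the very definition of natural reductivity recalled in the introduction, this establishes the claim, and geodesic orbitness then follows since naturally reductive spaces are GO. The basic observation is that in all cases (1)--(6) the isotropy module splits as $\mathfrak{m}=\mathfrak{m}_1\oplus\mathfrak{m}_2$ with two inequivalent summands (inequivalence is forced, since two equivalent summands would give rise to a continuum of invariant metrics with extra structure, but more to the point the cases are arranged so that the two summands sit in different ideals or are of visibly different type), so every $G$-invariant metric has the form $g=x_1\,B|_{\mathfrak{m}_1}+x_2\,B|_{\mathfrak{m}_2}$ for positive scalars $x_1,x_2$ and a fixed bi-invariant form $B$ on $\mathfrak g$ (the negative of the Killing form on each simple ideal, the standard form on abelian summands). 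Thus one only has a two-parameter family of metrics to handle.

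Next I would dispose of the cases where $\mathfrak g$ is a direct sum of ideals each of which already carries the pair $(\mathfrak g_i,\mathfrak h_i)$ as isotropy irreducible --- namely cases (2), (5), (6), and the abelian-factor bookkeeping in (3). Here $\mathfrak{m}_1$ and $\mathfrak{m}_2$ lie in distinct ideals $\mathfrak a_1,\mathfrak a_2$ of $\mathfrak g$, and the metric $g=x_1B|_{\mathfrak{m}_1}+x_2B|_{\mathfrak{m}_2}$ is induced by the bi-invariant metric $x_1B|_{\mathfrak a_1}+x_2B|_{\mathfrak a_2}$ on $G$ itself (after possibly adjusting the bi-invariant form on an $\mathbb R$-factor, which is automatic). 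So such spaces are even normal homogeneous, hence naturally reductive. This is routine.

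The substantive cases are (1) and (4), where a diagonal subalgebra is involved. In case (1), $\mathfrak g=\mathfrak f\oplus\mathfrak f\oplus\mathfrak f$, $\mathfrak h=\diag(\mathfrak f)$, and $\mathfrak m$ can be parametrized by the two summands complementary to the diagonal; here one knows that $G/H$ is, up to a change of transitive group, a realization of the group $F$ (three copies of $\mathfrak f$ modulo the diagonal is the "three-fold" analogue of $F\times F/\diag$), and the relevant metrics are exactly the left-invariant metrics on $F$ that are $\Ad(F)$-invariant under the diagonal action --- these are naturally reductive by a direct construction (take $G'=F\times F\times F$ again but with a suitably chosen indefinite bi-invariant form whose restriction to $\mathfrak m$ realizes $g$: one needs the $3\times 3$ symmetric form on the "$\mathfrak f$-multiplicity space" to be chosen so that it is negative definite on the two-dimensional complement of the diagonal and the diagonal is $B$-orthogonal to $\mathfrak m$). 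In case (4), $\mathfrak g=\mathfrak l\oplus\mathfrak k$ with $(\mathfrak l,\mathfrak k\oplus\mathfrak k_1)$ isotropy irreducible and $\mathfrak h=\diag(\mathfrak k)\oplus\mathfrak k_1$; writing $\mathfrak l=\mathfrak k\oplus\mathfrak k_1\oplus\mathfrak p$ with $\mathfrak p$ the isotropy-irreducible complement, one has $\mathfrak m=\mathfrak m_1\oplus\mathfrak p$ where $\mathfrak m_1$ is the anti-diagonal copy of $\mathfrak k$ inside $\mathfrak k\oplus\mathfrak k$. The metric scales $\mathfrak m_1$ and $\mathfrak p$ independently; the point is to produce an indefinite bi-invariant form on $\mathfrak l\oplus\mathfrak k$ --- necessarily of the shape $\alpha B_{\mathfrak l}+\beta B_{\mathfrak k}$ on the $\mathfrak k$-part and $\alpha B_{\mathfrak l}$ on $\mathfrak p$ --- and to check that for the correct choice of $\alpha,\beta$ (with $\beta$ possibly negative) its restriction to $\mathfrak m$ reproduces $g$ while $\mathfrak h$ stays in the kernel-free orthogonal complement.

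The main obstacle I anticipate is precisely this last verification in cases (1) and (4): one must show that the linear system for the bi-invariant-form parameters is always solvable with the resulting restriction to $\mathfrak m$ \emph{positive definite}, even though the ambient form is indefinite; this amounts to checking a couple of inequalities relating the scaling of the Killing forms of $\mathfrak k$ inside $\mathfrak l$ (an "eigenvalue of the Casimir" / index computation) against the freedom in $x_1,x_2$. If for some configuration the naive ansatz fails, one falls back on the criterion of Lemma~\ref{GO-criterion} directly: decompose $X=X_1+X_2\in\mathfrak m_1\oplus\mathfrak m_2$, seek $Z\in\mathfrak h$ of the form determined by the $\mathfrak k$-component of $X_1$, and verify $([X+Z,Y]_{\mathfrak m},X)=0$ using that $\mathfrak p$ is isotropy irreducible (so $[\mathfrak h,\cdot]$ and the bracket structure are tightly constrained) — but I expect the bi-invariant-metric construction to go through in all cases, making the space not merely GO but naturally reductive as claimed.
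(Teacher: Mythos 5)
Your overall strategy — exhibit, in each case, a bi-invariant (possibly indefinite) metric on a transitive group whose restriction realizes the given invariant metric — is exactly the route the paper takes, and your dispatch of cases (2), (3), (5), (6) as products of isotropy irreducible spaces, hence normal homogeneous, is correct and matches the paper. Your sketch of case (4) also coincides with the paper's Proposition~\ref{variant4} (an indefinite form $\alpha B_{\mathfrak l}+\beta B_{\mathfrak k}$, with $\beta$ chosen to hit the two scaling parameters; the paper even notes that for the degenerate value one instead gets natural reductivity with respect to $L$ alone).

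However, there is a concrete error in your framing that undermines the argument for case (1). You assert that in all cases (1)--(6) the two isotropy summands are inequivalent, so every invariant metric is $x_1 B|_{\mathfrak m_1}+x_2 B|_{\mathfrak m_2}$ and the metric space is two-dimensional. This is false for case (1): in $F^3/\diag(F)$ both summands are copies of $\ad(\mathfrak f)$, the summands \emph{are} equivalent, and the space of invariant metrics is three-dimensional (parametrized by a positive definite quadratic form $Ax^2+2Bxy+Cy^2$ in the paper's Proposition~\ref{case5.1pp}). So the claim that "one only has a two-parameter family of metrics to handle" omits the genuinely hard part of case (1), namely the off-diagonal parameter $B$. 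You do later gesture at a $3\times3$ symmetric form, which contradicts your own two-parameter reduction, but you never resolve the inconsistency or carry out the verification. The paper's proof of Proposition~\ref{case5.1pp} is not a formality: it first treats the sub-cases $B=0$, $A+B=0$, $B+C=0$ (where the metric is normal with respect to some $F^2\subset F^3$), and then, for $B(A+B)(B+C)\neq 0$, solves an explicit linear system to find a three-parameter bi-invariant pseudo-Riemannian form $(\alpha,\beta,\gamma)=\bigl(\tfrac{D}{B+C},-\tfrac{D}{B},\tfrac{D}{A+B}\bigr)$ with $D=AC-B^2$ whose restriction reproduces the given metric, checking $\alpha\beta\gamma\neq 0$ and $\alpha+\beta+\gamma\neq 0$. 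Your proposal explicitly flags this verification as "the main obstacle I anticipate" and then leaves it undone, so as written it is a plan rather than a proof: the missing computation is precisely the content of the paper's Proposition~\ref{case5.1pp}, and without it (and the analogous computation in case (4)) the claim of natural reductivity is not established.
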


In fact, those spaces $G/H$ in cases (2), (3), (5), and (6) of Proposition \ref{case5.1p} are normal homogeneous.
It is known also that $G/H$ in case (4) are naturally reductive (see details in Proposition \ref{variant4} below).
Case (1) of Proposition \ref{case5.1p} is more complicated.
For this case, we prove that all invariant metrics on the Ledger--Obata spaces $F^3/ \diag(F)$,
where $F$ is any connected simple compact Lie group, are naturally reductive, see  Proposition \ref{case5.1pp} below.
Note that for  a simply connected compact homogeneous space $G/H$ the group $H$ is connected, but the cases (5) and (6) are impossible.
\smallskip

Finally, we have the following theorem for $G$ simple.

\begin{theorem}\label{maimres}
Assume that $G/H$ is a compact and simply connected homogeneous space with $G$ simple and the isotropy representation is the direct sum
of two irreducible representations. If $G/H$, supplied with an $G$-invariant Riemannian metric which isn't normal homogeneous with respect to $G$,
is a geodesic orbit space, then there exists $K\subset G$ such that $H\subset K\subset G$, $G/K$ is symmetric, and $(H,K,G)$ is one of the following cases:
\begin{enumerate}
  \item $G_2\subset Spin(7) \subset Spin(8)$;
  \item $SO(2)\times G_2\subset SO(2)\times SO(7) \subset SO(9)$;
  \item $U(k)\subset SO(2k) \subset SO(2k+1)$ for $k\geq 2$;
  \item $SU(2r+1)\subset U(2r+1) \subset SO(4r+2)$ for $r\geq 2$;
  \item $Spin(7)\subset SO(8) \subset SO(9)$;
  \item $SU(m)\times SU(n)\subset S(U(m)U(n))\subset SU(m+n)$ for $m>n\geq 1$;
  \item $Sp(n)U(1)\subset S(U(2n)U(1))\subset SU(2n+1)$ for $n\geq 2$;
  \item $Sp(n)U(1)\subset Sp(n)\times Sp(1)\subset Sp(n+1)$ for $n\geq 1$;
  \item $Spin(10)\subset Spin(10)SO(2)\subset E_6$.
\end{enumerate}
Moreover, every such space $G/H$, supplied with any $G$-invariant Riemannian metric, is geodesic orbit. The space of $G$-invariant Riemannian metrics on $G/H$ has dimension $2$ for all spaces except case {\rm(1)}, where it has dimension $3$.
\end{theorem}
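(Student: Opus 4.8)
The plan is to classify, case by case, all compact simply connected $G/H$ with $G$ simple whose isotropy representation splits into exactly two irreducible summands $\mathfrak{m} = \mathfrak{m}_1 \oplus \mathfrak{m}_2$, and then decide for each which $G$-invariant metrics $g = x_1(\cdot,\cdot)|_{\mathfrak{m}_1} + x_2(\cdot,\cdot)|_{\mathfrak{m}_2}$ are geodesic orbit. The starting point is to invoke the GO-criterion (Lemma \ref{GO-criterion}): writing a general vector as $X = X_1 + X_2$ with $X_i \in \mathfrak{m}_i$, one needs, for every such $X$, an element $Z \in \mathfrak{h}$ so that the bilinear form $Y \mapsto ([X+Z,Y]_{\mathfrak{m}}, X)$ vanishes. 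Because there are only two summands, the brackets $[\mathfrak{m}_i,\mathfrak{m}_j]_\mathfrak{m}$ are heavily constrained, and the condition reduces to a manageable algebraic identity involving $x_1/x_2$ and the structure constants; this is the standard reduction used in the GO literature (e.g.\ \cite{Nik2016}).

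Next I would organize the classification input. The list of compact irreducible symmetric pairs $G/K$ with $G$ simple, together with the further isotropy-irreducible quotients $K/H$, provides the skeleton: indeed, whenever $\mathfrak{h} \subset \mathfrak{k} \subset \mathfrak{g}$ with $G/K$ symmetric and the isotropy $K$-module $\mathfrak{p} = T_o(G/K)$ remaining irreducible while $\mathfrak{k}/\mathfrak{h}$ contributes the second summand, one gets a two-summand space that is a candidate for being non-normal GO. So the first step of the proof is to enumerate all pairs $(\mathfrak{h},\mathfrak{g})$ with two isotropy summands — using Proposition \ref{case5.1p}(7) to restrict to $G$ simple — and to observe that each such pair either is isotropy-irreducible-over-an-intermediate-symmetric-$K$ (yielding the nine families in the statement) or falls into a short list of other configurations (fibrations over non-symmetric isotropy irreducible bases, or spaces where both summands are ``horizontal''). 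For the latter configurations one shows directly via Lemma \ref{GO-criterion} that geodesic orbit forces the metric to be normal, so they contribute nothing new.

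For the nine listed families the remaining work is twofold. First, verify the metric count: the invariant metrics form a $2$-parameter family (ratios on $\mathfrak{m}_1$, $\mathfrak{m}_2$, up to scaling one parameter) except in case (1), $G_2 \subset Spin(7) \subset Spin(8)$, where an exceptional coincidence — $Spin(8)$ triality, which makes the isotropy of $Spin(8)/G_2 = S^7 \times S^7$ decompose with an extra invariant — produces a $3$-dimensional family; this is checked by computing $\dim (\operatorname{End}_{\mathfrak{h}}(\mathfrak{m}))$ directly. Second, prove the GO property for \emph{all} invariant metrics in each family: here one exhibits, for each $X = X_1 + X_2$, the compensating $Z \in \mathfrak{h}$ explicitly. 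The natural candidate comes from the fact that $G/K$ is symmetric (so $[\mathfrak{p},\mathfrak{p}] \subset \mathfrak{k}$) together with the intermediate structure: one takes $Z$ to be essentially the $\mathfrak{h}$-part of $[X_2, X_1]$ or a scalar multiple thereof, and then the symmetric-space bracket relations collapse the GO-identity to a single numerical condition on $x_1, x_2$ that turns out to hold identically — this is the mechanism behind naturally reductive (indeed, generalized normal) metrics on these spaces.

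The main obstacle, I expect, is the \emph{converse} direction in each family and the completeness of the classification: one must show that no \emph{other} intermediate subalgebra $\mathfrak{k}$ or no other two-summand pair $(\mathfrak{h},\mathfrak{g})$ with $G$ simple supports a non-normal GO metric. This requires (a) a careful case analysis of all $\mathfrak{h} \subset \mathfrak{g}$ producing two summands — leaning on known classifications of isotropy irreducible spaces (Wolf, Wang--Ziller) and of homogeneous spaces with few isotropy summands — and (b) for each non-listed candidate, a short computation with Lemma \ref{GO-criterion} showing that the GO-equations force $x_1 = x_2$ (normality) or are inconsistent. Handling the low-rank and exceptional coincidences (the $G_2$, $Spin(7)$, $E_6$ cases) where extra invariants or extra intermediate subgroups appear, and making sure triality-type phenomena are accounted for exactly once, is the delicate part; everything else is a finite, if lengthy, bookkeeping exercise built on the GO-criterion and the symmetric-space bracket identities.
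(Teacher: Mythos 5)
Your high‑level strategy (enumerate two‑summand spaces with $G$ simple, then test which admit non‑normal GO metrics) mirrors the paper's, but the proposal is missing the two tools the paper actually needs to close the argument, and it misidentifies the mechanism behind the positive cases.

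First, the paper does not prove completeness by ``a short computation with Lemma~\ref{GO-criterion}'' for each of the many two‑summand spaces. It refines Lemma~\ref{GO-criterion} into a quantitative statement (Lemma~\ref{GO-criterion2}, Theorem~\ref{two modules}) that, for a non‑normal GO metric, forces the existence of nontrivial centralizers $C_{\mathfrak{h}}(X)$ and $C_{\mathfrak{h}}(Y)$ for generic $X\in\mathfrak{m}_1$, $Y\in\mathfrak{m}_2$. This is translated into the statement that the \emph{principal isotropy algebras} of the isotropy representations $\chi_1,\chi_2$ (and of a related representation $\eta$ of the intermediate $K$) must be nontrivial and of a prescribed minimal dimension (Propositions~\ref{piar1}--\ref{piar3}). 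Matching this against the Hsiang--Hsiang classification of linear actions with nontrivial connected principal isotropy groups rules out, in one uniform stroke, all the Dickinson--Kerr cases with $H$ maximal in $G$ (Table~1, eighteen families) and all cases where $\mathfrak{k}=\mathfrak{h}\oplus\mathfrak{m}_1$ is a subalgebra but $G/K$ is not symmetric (Table~2, twenty‑one more families). Your proposal does not contain this idea, and without it the "finite bookkeeping exercise" would in practice be a long series of ad hoc bracket computations with no guiding principle; you identify completeness as the main obstacle but offer no concrete mechanism to overcome it.

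Second, your explanation of why the nine listed families \emph{are} GO is incorrect. You propose that the compensating $Z$ comes from the $\mathfrak{h}$‑part of $[X_2,X_1]$ and that this exhibits these spaces as naturally reductive. But the paper explicitly notes that only cases (6) and (8) are naturally reductive (via Ziller), that case (2) is not even weakly symmetric, and that the general positive direction is settled by appealing to Tamaru's classification of GO metrics on spaces $G/H$ fibered over irreducible symmetric spaces $G/K$ (Table~3), together with Proposition~\ref{prop.zil} for $Spin(8)/G_2$. So the "naturally reductive" mechanism simply does not apply to most of the list, and you would need to replace it with Tamaru's theorem (or reprove it) to establish the GO property.

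In short: the skeleton of your plan (two summands $\Rightarrow$ enumerate $\Rightarrow$ test) is right, but the load‑bearing steps — the principal‑isotropy‑algebra obstruction with the Hsiang--Hsiang tables for the converse, and Tamaru's theorem for the positive direction — are absent, and the natural‑reductivity mechanism you substitute is false for most of the nine families.
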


Note that the spaces of cases (1), (3)--(9) in Theorem \ref{maimres} are weakly symmetric, whereas
the space of case (2) is not, see details in  \cite{Ngu}, \cite{W1} or \cite{Yakimova}. The spaces of cases (3), (5), (6) for $n=1$, and (7)
in the above theorem admits generalized normal homogeneous metrics, see details in \cite{BerNik} and \cite{BerNik2012}. Finally, the spaces of cases (6) and (8) in the above theorem are naturally reductive according to Theorem 3 in \cite{Z2}.
\smallskip

It should be noted also that simply connected compact homogeneous spaces $G/H$ with simple $G$ and two
components in the isotropy representation (that corresponds to case (7) of Proposition \ref{case5.1p})
are classified by W.~Dickinson and M.~Kerr in the paper \cite{DickKerr}. In order to prove Theorem~\ref{maimres},
we use this classification substantially together with special new methods. Another important ingredient is the classification
of geodesic orbit spaces fibered over irreducible symmetric spaces by H.~Tamaru \cite{Ta}.
\smallskip

The paper is organized as follows. In Section 2, we classify compact homogeneous spaces $G/H$ with connected $H$ and two modules in the isotropy representation,
i.~e. we prove Proposition~\ref{case5.1p}. Then in Section 3, we study all invariant metrics
on homogeneous manifolds $G/H$ for non-simple $G$ , i.~e. cases (1)--(6) in Proposition~\ref{case5.1p}.
That is, we prove Theorem~\ref{maimres0}. Section 4 is to study case (7) in Proposition~\ref{case5.1p}.
First we develop a theory for a homogenous space of case (7) to be geodesic orbit.
Then we prove Theorem~\ref{maimres} based on the classification given by W.~Dickinson and M.~Kerr in the paper \cite{DickKerr}
and the study on principal isotropy groups of representations in the paper \cite{Hsiang}.

\section{On two irreducible submodules in the isotropy representation}

In this section we prove Proposition~\ref{case5.1p}, hence, describe the structure of compact homogeneous spaces $G/H$ with
connected $H$ and two modules in the isotropy representation.
\smallskip

{\bf The proof of Proposition~\ref{case5.1p}.}
Recall that the properties of a module $\mathfrak{q}\subset \mathfrak{p}$ to be $\Ad(H)$-invariant and
$\ad(\mathfrak{h})$-invariant are equivalent for a connected group $H$.
\smallskip

Since $G/H$ is compact, we know that the Lie group $G$ is compact and $\mathfrak{g}$ is a direct sum of its center $\mathfrak{c(g)}$
and a semisimple ideal $[\mathfrak{g},\mathfrak{g}]$. Since there are two irreducible submodules in the isotropy representation,
we have $\dim \mathfrak{c(g)} \leq 2$.
If $\dim \mathfrak{c(g)} =2$, then $\mathfrak{h}=0$ and we get case~(5) in Proposition~\ref{case5.1p}, because $H$ is trivial due to connectedness.
\smallskip

If $\dim \mathfrak{c(g)} =1$, then we have two possibilities: $\mathfrak{h}\subset \mathfrak{g_1}:=[\mathfrak{g},\mathfrak{g}]$ or
$\mathfrak{h}\not\subset \mathfrak{g_1}$.
In the first case we get case (6) in Proposition~\ref{case5.1p}, in the second we get case (4) in Proposition~\ref{case5.1p} with
$\mathfrak{k}=\mathbb{R}$. It is clear that $\mathfrak{g_1}$ is either simple or $\mathfrak{g_1}=\mathfrak{f} \oplus \mathfrak{f}$ and
$\mathfrak{h}=\diag(\mathfrak{f})$ for some simple Lie algebra $\mathfrak{f}$ in case (6).
\smallskip

Now, we assume that $\mathfrak{c(g)}$ is trivial. We will use the ideas from Section 3 in \cite{Nik2016n}.
Denote by $B=B(\boldsymbol{\cdot}\,,\boldsymbol{\cdot})$ the Killing form of $\mathfrak{g}$. Since $G$ is compact and semisimple,
$B$ is negatively definite on $\mathfrak{g}$.
Therefore, $\langle\boldsymbol{\cdot}\,,\boldsymbol{\cdot}\rangle:=-B(\boldsymbol{\cdot}\,,\boldsymbol{\cdot})$ is a positive definite inner product
on $\mathfrak{g}$.
Since the Lie algebra $\mathfrak{g}$ is semisimple, we can decompose it into a ($\langle\boldsymbol{\cdot}\,,\boldsymbol{\cdot}\rangle$-orthogonal)
sum of simple ideals
$$
\mathfrak{g}=\mathfrak{g}_1\oplus\mathfrak{g}_2\oplus \cdots \oplus \mathfrak{g}_s.
$$
Let $\varphi_i:\mathfrak{h} \rightarrow \mathfrak{g}_i$ be the $\langle\boldsymbol{\cdot}\,,\boldsymbol{\cdot}\rangle$-orthogonal projection.
It is easy to see that all these projections are Lie algebra homomorphisms. We rearrange indices so that $\varphi_i(\mathfrak{h})\neq \mathfrak{g}_i$
for $i=1,2,\dots,p$ and $\varphi_i(\mathfrak{h})= \mathfrak{g}_i$ for $i=p+1,\dots,s$.
\smallskip

Since the Lie algebra $\mathfrak{h}$ is compact, we can decompose it into a
($\langle\boldsymbol{\cdot}\,,\boldsymbol{\cdot}\rangle$-orthogonal) sum of the center and simple ideals
$$
\mathfrak{h}= \mathbb{R}^l\oplus \mathfrak{h}_1\oplus\mathfrak{h}_2\oplus \cdots \oplus \mathfrak{h}_m.
$$
For $i=1,\dots,m$, we denote by $a^i$ the vector $(a^i_1,a^i_2,\dots, a^i_s)\in \mathbb{R}^s$, where
$a^i_j=1$, if $\varphi_j(\mathfrak{h}_i)$ is isomorphic to $\mathfrak{h}_i$, and $a^i_j=0$, if $\varphi_j(\mathfrak{h}_i)$ is a trivial Lie algebra
(there is no other possibility, because $\varphi_j$ is a Lie algebra homomorphism).
It is easy to see that $\sum_{i=1}^m a^i_j=1$ for $j=p+1,\dots,s$, since $\varphi_j(\mathfrak{h})= \mathfrak{g}_j$ is a simple Lie algebra.
Denote also the number $\dim(\varphi_i(\mathbb{R}^l))$ by $u_i$ for $i=1,\dots,s$, and put $u=\sum_{i=1}^s u_i$, $v_i=\sum_{j=1}^s a^i_j$ for $i=1,\dots,m$.
It is clear that $u \geq l$ and $v_i\geq 1$ for all $i$.
\smallskip

Note that (in the above notation and suggestions) the following inequality holds:
$$
p+u+\sum_{i=1}^m v_i - l-m =p+(u-l)+\sum_{i=1}^m (v_i-1) \leq 2.
$$
Indeed, for $i=1,2,\dots,p$, every $\mathfrak{g}_i$ contains at least one irreducible modules $\mathfrak{p}_j \subset \mathfrak{p}$, since
$\varphi_i(\mathfrak{h})\neq \mathfrak{g}_i$ and $\langle\boldsymbol{\cdot}\,,\boldsymbol{\cdot}\rangle$-orthogonal complement to
$\varphi_i(\mathfrak{h})$ in $\mathfrak{g}_i$ is a subset of $\mathfrak{p}$. This gives at least $p$ irreducible modules.
Further, an $\langle\boldsymbol{\cdot}\,,\boldsymbol{\cdot}\rangle$-orthogonal complement to
$\mathbb{R}^l$ in $\oplus_{i=1}^s \varphi_i(\mathbb{R}^l)$ is also a subset of $\mathfrak{p}$. It is clear that $\ad(\mathfrak{h})$ acts trivially
on this complement, hence we get exactly $u-l$ one-dimensional irreducible submodules in it.
Finally, for any $i=1,\dots,m$, an $\langle\boldsymbol{\cdot}\,,\boldsymbol{\cdot}\rangle$-orthogonal complement to
$\mathfrak{h}_i$ in $\oplus_{j=1}^s \varphi_j(\mathfrak{h}_i)$ is also a subset of $\mathfrak{p}$.
In fact, we deal with compliment to $\diag(\mathfrak{h}_i)$ in $\mathfrak{h}_i\oplus \mathfrak{h}_i\oplus \cdots \oplus \mathfrak{h}_i$
($v_i$ pairwise isomorphic summands).
In this case we have exactly $(v_i-1)$ $\ad(\mathfrak{h})$-irreducible modules. Summing all numbers of irreducible submodules, we get the inequality.
\smallskip

Without loss of generality we may rearrange the indices so that $v_1 \geq v_2 \geq \cdots \geq v_{m-1}\geq v_m (\geq 1)$.
\smallskip

First, we prove that for any $j$ with
$v_j=1$ we get $a_j^i=0$ for all $i> p$.
Indeed, if $a_j^i=1$ for some $i >p $, then $\mathfrak{g}_i=\mathfrak{h}_j$
(since $\mathfrak{g}_i=\varphi_i(\mathfrak{h})\supset \varphi_i(\mathfrak{h}_j)\neq 0$, whereas $\varphi_k(\mathfrak{h}_j)=0$ for $k \neq i$ due to $v_j=1$),
that contradicts to the effectiveness of the pair $(\mathfrak{g},\mathfrak{h})$.
Therefore, $v_j=1$ implies $\mathfrak{h}_j \subset \mathfrak{g}_1\oplus \cdots \oplus \mathfrak{g}_p$.
\smallskip

It is clear that $v_1\leq 3$, moreover, $v_1=3$ implies $l=0$, $m=1$ and we get case {\rm (1)} of Proposition~\ref{case5.1p} since the pair
$(\mathfrak{h}_1\oplus \mathfrak{h}_1\oplus \mathfrak{h}_1,\diag(\mathfrak{h}_1))$ determines the homogeneous space with two irreducible submodules.
\smallskip

If $v_1=2$ and $v_2=2$, then $v_j=1$ for all $j \geq 3$, $p=0$ and $u=l$. It is easy to see that $m=2$, $l=0$,
$\mathfrak{h}_i=\diag(\mathfrak{f}_i) \subset \mathfrak{f}_i \oplus \mathfrak{f}_i$ for a simple compact Lie algebra $\mathfrak{f}_i$, $i=1,2$, and
$\mathfrak{g}= \mathfrak{f}_1 \oplus \mathfrak{f}_1 \oplus  \mathfrak{f}_2 \oplus \mathfrak{f}_2$, hence we get case {\rm (2)} of Proposition~\ref{case5.1p}.
\smallskip

If $v_1=2$ and  $v_j=1$ for all $j \geq 2$, then $p\leq 1$ and (as it is proved above)
we get $a_j^i=0$ for all $i\geq 2$.
Therefore, $a_j^1=1$ and $\mathfrak{h}_j \subset \mathfrak{g}_1$ for all $i\geq 2$.
If $a_1^1=0$ then we get case {\rm (3)}, if $a_1^1=1$ then we get case {\rm (4)} of Proposition~\ref{case5.1p}.
\smallskip

Finally, suppose that  $v_j=1$ for all $j \geq 1$. Clear that $p\leq 2$. For any $j$ we get $a_j^i=0$ for all $i>p$
and $\mathfrak{h}_j \subset \mathfrak{g}_1\oplus \cdots \oplus \mathfrak{g}_p$ (see above). Hence, $p=0$ is impossible and
$p=1$ implies that $\mathfrak{h}_j \subset \mathfrak{g}_1$ for all $j$, therefore, $G$ is simple, and we get the {\rm (7)} of the proposition.
If $p=2$, then $u=l$, $\mathfrak{g}=\mathfrak{g}_1\oplus \mathfrak{g}_2$ and $\mathfrak{h}=\mathfrak{k}_1\oplus \mathfrak{k}_2$ with
$\mathfrak{k}_i \subset \mathfrak{g}_i$, whereas the pair $(\mathfrak{g}_i,\mathfrak{h}_i)$ is isotropy irreducible, $i=1,2$.
Hence, we get case {\rm (2)} again.

\begin{remark}
The problem of the classification of compact homogeneous spaces $G/H$ with two components in the isotropy representation
and non-connected $H$, is more complicated and is not studied here.
\end{remark}

\section{The cases for non-simple $G$}

This section is to prove Theorem~\ref{maimres0}. We will discuss case by case. Firstly, for cases (2), (3), (5) and (6) of Proposition \ref{case5.1p},
we deal with the product of two isotropy irreducible spaces, hence normal homogeneous, and therefore geodesic orbit.
Recall also that the cases (5) and (6) are impossible simply connected compact homogeneous spaces $G/H$.
\smallskip

Secondly, for case (4), we get naturally reductive, hence geodesic orbit metrics. This result is well-known (see e.~g. Theorem 3 in \cite{Z2}),
but for reader's convenience we add also a short proof.

\begin{prop}\label{variant4} Suppose that
$\mathfrak{g}=\mathfrak{l}\oplus \mathfrak{k}$, where
where $\mathfrak{l}$ is a simple Lie algebra,
$\mathfrak{k}$ is either a simple Lie algebra or $\mathbb{R}$, and there exists
a Lie algebra $\mathfrak{k}_1$ such that
$\mathfrak{k}\oplus \mathfrak{k}_1$ is a subalgebra in~$\mathfrak{l}$ and the pair $(\mathfrak{l}, \mathfrak{k}\oplus \mathfrak{k}_1)$
is isotropy irreducible, whereas $\mathfrak{h}=\diag(\mathfrak{k})\oplus \mathfrak{k}_1 \subset \mathfrak{l}\oplus \mathfrak{k}$.
Then every $G$-invariant Riemannian metric on the corresponding homogeneous space $G/H$ with connected $H$ is naturally reductive either with respect to
$G=L\times K$ or with respect to $L$.
\end{prop}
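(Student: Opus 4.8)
The plan is to analyze the structure $\mathfrak{g} = \mathfrak{l} \oplus \mathfrak{k}$ with $\mathfrak{h} = \diag(\mathfrak{k}) \oplus \mathfrak{k}_1$ by writing down the isotropy module explicitly. Let $\mathfrak{m}$ be the orthogonal complement of $\mathfrak{h}$ in $\mathfrak{g}$ with respect to the negative Killing form (or a bi-invariant metric on the compact group $G = L \times K$). The key observation is that $\mathfrak{m}$ splits naturally into two pieces: one piece $\mathfrak{m}_1$ is the orthogonal complement of $\mathfrak{k} \oplus \mathfrak{k}_1$ inside $\mathfrak{l}$ (the isotropy module of the isotropy irreducible pair $(\mathfrak{l}, \mathfrak{k} \oplus \mathfrak{k}_1)$), and the other piece $\mathfrak{m}_0$ is the "anti-diagonal" copy of $\mathfrak{k}$ sitting inside $\mathfrak{k}_{\mathfrak{l}} \oplus \mathfrak{k}$, where $\mathfrak{k}_{\mathfrak{l}}$ denotes the copy of $\mathfrak{k}$ inside $\mathfrak{l}$. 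A general $G$-invariant metric is then determined by a positive scalar on $\mathfrak{m}_1$ and a positive scalar on $\mathfrak{m}_0$ (since $\mathfrak{m}_1$ is $\Ad(H)$-irreducible and $\mathfrak{m}_0$ is $\Ad(H)$-irreducible when $\mathfrak{k}$ is simple, or one-dimensional when $\mathfrak{k} = \mathbb{R}$), so the space of invariant metrics is two-parameter.

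First I would verify that the bracket relations are compatible with a naturally reductive presentation. The case where the two scalar parameters are equal (up to the natural normalization) gives the normal homogeneous metric with respect to $G = L \times K$; here natural reductivity is automatic. For the general case, the strategy is to exhibit the metric as naturally reductive with respect to the larger group $L$ acting on $G/H$: indeed, since $\mathfrak{h} = \diag(\mathfrak{k}) \oplus \mathfrak{k}_1$ projects isomorphically onto a subalgebra of $\mathfrak{l}$ (namely $\mathfrak{k}_{\mathfrak{l}} \oplus \mathfrak{k}_1 = \mathfrak{k} \oplus \mathfrak{k}_1$), the group $L$ acts transitively on $G/H$ with isotropy subalgebra $\mathfrak{k} \oplus \mathfrak{k}_1 \subset \mathfrak{l}$. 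But $(\mathfrak{l}, \mathfrak{k} \oplus \mathfrak{k}_1)$ is isotropy irreducible, so there is only a one-parameter family of $L$-invariant metrics on $L/(K K_1)$ — which accounts for only one of our two parameters. The resolution is that the second parameter comes from the fact that $G/H$ fibers over $L/(K K_1)$ with fiber $K$ (a group, hence with a bi-invariant metric), and we should instead present the metric via a one-parameter subgroup: more precisely, for each fixed ratio of the two scalars there is a pseudo-Riemannian bi-invariant bilinear form on $\mathfrak{l}$ — a linear combination of the Killing form of $\mathfrak{l}$ and (the pullback of) the Killing form of $\mathfrak{k}$ via the projection $\mathfrak{l} \supset \mathfrak{k}_{\mathfrak{l}} \to \mathfrak{k}$ — whose restriction to the isotropy complement induces exactly the given metric. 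Checking that such a bilinear form is $\ad(\mathfrak{l})$-invariant and non-degenerate on a complement to $\mathfrak{k} \oplus \mathfrak{k}_1$ is the technical heart.

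Concretely, I would write $B_{\mathfrak{l}}$ for the Killing form of $\mathfrak{l}$, let $\pi: \mathfrak{k}_{\mathfrak{l}} \oplus \mathfrak{k}_1 \to \mathfrak{k}_{\mathfrak{l}}$ be the projection followed by the isomorphism $\mathfrak{k}_{\mathfrak{l}} \cong \mathfrak{k}$, and consider bilinear forms of the shape $Q_t = B_{\mathfrak{l}} - t\, \pi^* B_{\mathfrak{k}}$ on $\mathfrak{l}$ (with $t$ small or of the appropriate sign). Since $\mathfrak{k}_{\mathfrak{l}}$ is an ideal of the subalgebra $\mathfrak{k}_{\mathfrak{l}} \oplus \mathfrak{k}_1$ and $[\mathfrak{l}, \mathfrak{k}_{\mathfrak{l}}]$ need not lie in $\mathfrak{k}_{\mathfrak{l}}$, the form $\pi^* B_{\mathfrak{k}}$ is generally not $\ad(\mathfrak{l})$-invariant — so this naive attempt as stated on all of $\mathfrak{l}$ fails. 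The correct approach, following Wolf and the references cited (Theorem 3 in \cite{Z2}), is rather to use the group $L \times K$ itself but with a bi-invariant \emph{pseudo}-Riemannian form on $\mathfrak{l} \oplus \mathfrak{k}$: take $Q = -B_{\mathfrak{l}} \oplus (c \cdot (-B_{\mathfrak{k}}))$ for a suitable constant $c$, possibly negative, so that its restriction to $\mathfrak{m} = \mathfrak{m}_1 \oplus \mathfrak{m}_0$ reproduces the two independent scalars while $Q$ stays non-degenerate on a reductive complement to $\mathfrak{h}$. The main obstacle — and where I would spend most of the effort — is checking that for every choice of the two positive metric parameters there exists such a $c$ (necessarily negative in a range, which is why we genuinely need the pseudo-Riemannian freedom) making the restriction of $Q$ to a $Q$-orthogonal complement of $\mathfrak{h}$ positive definite and equal to the given metric; this amounts to a short explicit linear-algebra computation on the two-dimensional cone of invariant metrics, and to verifying non-degeneracy of $Q$ on $\mathfrak{h}$ so that the $Q$-orthogonal complement is well-defined.
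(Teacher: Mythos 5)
Your final idea — using the bi-invariant pseudo-Riemannian form $Q=-B_{\mathfrak{l}}\oplus c\,(-B_{\mathfrak{k}})$ on $\mathfrak{g}=\mathfrak{l}\oplus\mathfrak{k}$ with $c$ allowed to be negative, computing its $Q$-orthogonal complement $\widetilde{\mathfrak{m}}_2=\{(\beta X,-\alpha X)\}$ to $\mathfrak{h}$, and matching the induced metric to the given one — is exactly the paper's core argument. But you stop at the level of a plan, and the explicit linear-algebra step you defer is precisely where the subtlety lies: solving $\alpha=a$ and the second compatibility equation for $\beta$ yields a rational expression in $a,b$ whose denominator vanishes on a nontrivial codimension-one slice of the two-parameter cone of metrics (in the paper's normalization this is $3a=b$). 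On that slice no choice of $\alpha,\beta\neq 0$ works, so the $L\times K$ presentation genuinely fails there. That is the reason the proposition's conclusion is a disjunction ``naturally reductive w.r.t.\ $L\times K$ \emph{or} w.r.t.\ $L$''; your writeup does not account for the second branch at all, so there is a real gap.

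Ironically, the tool needed to close the gap is the $L$-action you set up and then discarded, and you discarded it on an incorrect premise. You claim $L$ acts transitively on $G/H$ with isotropy algebra $\mathfrak{k}\oplus\mathfrak{k}_1$ (``since $\mathfrak{h}$ projects isomorphically onto $\mathfrak{k}_{\mathfrak l}\oplus\mathfrak{k}_1$''). The relevant object is not the projection of $\mathfrak{h}$ into $\mathfrak{l}$ but the intersection $\mathfrak{l}\cap\mathfrak{h}$: a vector $(X,0)$ lies in $\mathfrak{h}=\{(Y+W,Y):Y\in\mathfrak{k},\,W\in\mathfrak{k}_1\}$ only if $Y=0$, so $\mathfrak{l}\cap\mathfrak{h}=\mathfrak{k}_1$. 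Thus $L$ acts on $G/H$ with isotropy $K_1$, and the $\ad(\mathfrak{h})$-invariant complement $\mathfrak{m}_1\oplus\mathfrak{k}_{\mathfrak l}\subset\mathfrak{l}$ is exactly the $B_{\mathfrak l}$-orthogonal complement of $\mathfrak{k}_1$ in $\mathfrak{l}$. The restriction of $a\cdot(-B_{\mathfrak l})$ to this complement is an $L$-normal (hence naturally reductive) metric, and a short computation using $(X,-X)\equiv(2X,0)\pmod{\mathfrak h}$ shows that it coincides with $(\cdot,\cdot)=a\langle\cdot,\cdot\rangle|_{\mathfrak{m}_1}+b\langle\cdot,\cdot\rangle|_{\mathfrak{m}_2}$ precisely on the exceptional slice where the $L\times K$ ansatz breaks down. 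So: keep your $L\times K$ computation, carry it out to exhibit the forbidden slice explicitly, correct the identification of the $L$-isotropy, and use the $L$-normal presentation to cover that slice.
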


\begin{proof}
Let $\langle \cdot,\cdot \rangle$ be the minus Killing form of $\mathfrak{l}$ and suppose that $\mathfrak{k}$
(in the sum $\mathfrak{g}=\mathfrak{l}\oplus \mathfrak{k}$) is supplied with the restriction of
$\langle \cdot,\cdot \rangle$ (we use the fact that $\mathfrak{k}\oplus \mathfrak{k}_1$ is a subalgebra in~$\mathfrak{l}$).
We have $\ad(\mathfrak{h})$-invariant complement $\mathfrak{m}=\mathfrak{m}_1\oplus \mathfrak{m}_2$, where
$\mathfrak{m}_1$ is the $\langle \cdot,\cdot \rangle$-orthogonal complement to $\mathfrak{k}\oplus \mathfrak{k}_1$  in $\mathfrak{l}$ and
$\mathfrak{m}_2=\{(X,-X)\in \mathfrak{k}\oplus \mathfrak{k} \subset \mathfrak{l}\oplus \mathfrak{k}\}$.
Any  $\ad(\mathfrak{h})$-invariant inner product $(\cdot,\cdot)$ has the form
$(\cdot,\cdot)=a\cdot \langle \cdot,\cdot \rangle_{\mathfrak{m}_1}+b\cdot \langle \cdot,\cdot \rangle_{\mathfrak{m}_2}$ for some $a,b >0$.
Now, let us consider an $\ad(\mathfrak{g})$-invariant inner product $\langle \cdot,\cdot \rangle_{\ast}=
\alpha \cdot \langle \cdot,\cdot \rangle +\beta\cdot \langle \cdot,\cdot \rangle$ on $\mathfrak{g}=\mathfrak{l}\oplus \mathfrak{k}$,
$\alpha,\beta \in \mathbb{R}\setminus \{0\}$.
The $\langle \cdot,\cdot \rangle_{\ast}$-orthogonal complement to $\mathfrak{h}$ is $\mathfrak{m}_1 \oplus \widetilde{\mathfrak{m}}_2$, where
$\widetilde{\mathfrak{m}}_2=\{(\beta X,-\alpha X)\in \mathfrak{k}\oplus \mathfrak{k} \subset \mathfrak{l}\oplus \mathfrak{k}\}$.
Clear that $\langle \cdot,\cdot \rangle_{\ast}$ generates $(\cdot,\cdot)$ if and only if $\alpha =a$ (we compare the metrics on $\mathfrak{m}_1$) and
$4\alpha\beta(\alpha+\beta)=(\alpha+\beta)^2(a+b)$ (we compare the metrics on vectors $\bigl((\beta+\alpha) X, -(\beta+\alpha) X\bigr) \in \mathfrak{m}_2$
and on vectors $(2\beta X, -2\alpha X) \in \widetilde{\mathfrak{m}}_2$ for $X\in \mathfrak{k}$, since
$(2\beta X, -2\alpha X)-\bigl((\beta+\alpha) X, -(\beta+\alpha) X\bigr) \in \mathfrak{h}$).
This means that $\langle \cdot,\cdot \rangle_{\ast}$ with $\alpha=a >0$ and $\beta=\frac{a+b}{3a-b}$ does generate the metric $(\cdot,\cdot)$ if $3a \neq \beta$.
In particular $(\cdot,\cdot)$ is naturally reductive with respect to the Lie group $G=L\times K$ with the Lie algebra
$\mathfrak{g}=\mathfrak{l}\oplus \mathfrak{k}$.
\smallskip

If $3a=b$ then $(\cdot,\cdot)$ is generate by the restriction of $a\cdot \langle \cdot,\cdot \rangle$ on $\mathfrak{m}_1\oplus \mathfrak{k} \subset \mathfrak{l}$,
that is also $\ad(\mathfrak{h})$-invariant complement to $\mathfrak{h}$ in $\mathfrak{g}$.
Indeed $(2X,0)-(X,-X)\in \mathfrak{h}$ for $X\in \mathfrak{k}$ and $4a\langle X, X \rangle=(a+b)\langle X, X \rangle$.
In this case $(\cdot,\cdot)$ is naturally reductive with respect to the Lie group $L$.
\end{proof}
\medskip

Finally, for case (1) of Proposition \ref{case5.1p}, we claim that all invariant metrics on $F^3/\diag(F)$ (the Ledger--Obata space)
are naturally reductive, hence, geodesic orbit.
\smallskip

The spaces $F^m/\diag(F)$ are called Ledger--Obata spaces, where $F$ is a connected compact simple Lie group,
$F^m=F\times F\times\cdots\times F$ ($m$ factors and $m\geq 2$), and $\diag(F)=\{(X,X,\dots,X)|X\in F\}$. Ledger--Obata spaces were first introduced
in \cite{LO1968} as a natural generalization of symmetric spaces, since $F^2/\diag(F)$ is an irreducible symmetric space. For more details see \cite{CNN2017}.
Here we deal with the spaces $G/H=F^3/\diag(F)$. Note also that these spaces give examples of generalised Wallach spaces, see \cite{Nik2016n}.
It is not difficult to get that the spaces of all invariant metrics on $F^3/\diag(F)$ and naturally reductive metrics
both have dimension $3$. This hints to the assertion of the following proposition.

\begin{prop}\label{case5.1pp}
For any compact connected simple Lie group $F$, every invariant Riemannian metric on the Ledger--Obata space $G/H=F^3/\diag(F)$ is naturally reductive,
hence geodesic orbit.
\end{prop}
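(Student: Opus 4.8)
The plan is to realize every invariant metric on $F^3/\diag(F)$ as a metric induced by an appropriate (possibly indefinite) bi-invariant form on a suitable transitive group, exactly in the spirit of the proof of Proposition~\ref{variant4}. Write $\mathfrak{f}$ for $\Lie(F)$, fix $\langle\cdot,\cdot\rangle$ the minus Killing form on $\mathfrak{f}$, and decompose $\mathfrak{g}=\mathfrak{f}\oplus\mathfrak{f}\oplus\mathfrak{f}$ with $\mathfrak{h}=\diag(\mathfrak{f})$. A convenient $\ad(\mathfrak{h})$-invariant complement is $\mathfrak{m}=\mathfrak{m}_1\oplus\mathfrak{m}_2$, where we take two linearly independent ``trace-free'' copies of $\mathfrak{f}$ inside the sum of three copies; since $\mathfrak{f}$ is simple, each $\mathfrak{m}_i$ is $\ad(\mathfrak{h})$-irreducible and the two modules are equivalent, so $\ad(\mathfrak{h})$-invariant inner products on $\mathfrak{m}$ form a $3$-parameter family (a positive-definite symmetric $2\times2$ ``matrix of $\langle\cdot,\cdot\rangle$''). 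The cleaner coordinates, however, are the three pairwise ``difference'' subspaces: for $i\neq j$ let $\mathfrak{d}_{ij}=\{(\dots,X,\dots,-X,\dots)\}$ sit in slots $i,j$; any invariant metric is determined by how it weights these, subject to one linear relation coming from $\mathfrak{d}_{12}+\mathfrak{d}_{23}+\mathfrak{d}_{31}=0$ at the level of the associated quadratic forms.

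First I would parametrize the invariant metrics by a triple $(a_1,a_2,a_3)$ of positive reals attached to the three ordered pairs, using the $S_3$-symmetry of $F^3/\diag(F)$ that permutes the factors; this reduces the verification to a single normal form up to that symmetry. Second, I would write down the general $\Ad(F^3)$-invariant bilinear form on $\mathfrak{g}$, namely $\langle\cdot,\cdot\rangle_\ast=\lambda_1\langle\cdot,\cdot\rangle\oplus\lambda_2\langle\cdot,\cdot\rangle\oplus\lambda_3\langle\cdot,\cdot\rangle$ with $\lambda_i\in\mathbb{R}\setminus\{0\}$ (not required positive). Its $\langle\cdot,\cdot\rangle_\ast$-orthogonal complement to $\mathfrak{h}$ is an explicit ``skew'' copy $\widetilde{\mathfrak{m}}$ of $\mathfrak{f}\oplus\mathfrak{f}$, and projecting along $\mathfrak{h}$ gives, for each difference vector, a quadratic expression in the $\lambda_i$. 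Equating these with $(a_1,a_2,a_3)$ yields a system of three rational equations in three unknowns $\lambda_i$; I expect that after clearing denominators it becomes a genuinely solvable (generically invertible) system, so that for every $(a_1,a_2,a_3)$ there is a choice of $(\lambda_1,\lambda_2,\lambda_3)$ producing that metric, whence naturally reductive with respect to $G=F^3$.

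The main obstacle is exactly the locus where that $3\times3$ system degenerates — the analogue of the case $3a=b$ in Proposition~\ref{variant4}. On that degeneracy locus one must instead exhibit a different transitive group: the natural candidate is $F\times F$ acting on $F^3/\diag(F)$ (say via $(x,y)\mapsto(x,y,e)$-type embeddings, identifying $F^3/\diag(F)$ with $F^2$ equipped with a left-$F\times F$, right-$\diag$ structure), or $F^2$ with a bi-invariant pseudo-metric chosen to hit the remaining parameters; one then checks that the degenerate metrics are induced by a bi-invariant form on this smaller group, again giving natural reductivity. I would organize the endgame as: (i) solve the system on the open dense set; (ii) identify the degeneracy locus as one or two explicit hypersurfaces; (iii) on each, produce the alternative group and bi-invariant form, using the $S_3$-symmetry to cut down the number of cases. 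Since the dimensions of the space of invariant metrics and of the space of naturally reductive metrics both equal $3$, one expects no metric to be left out, which is the structural reason the argument must close; the work is in making the linear algebra of step (ii)–(iii) explicit, and I would keep those computations terse, mirroring the style of the proof of Proposition~\ref{variant4}.
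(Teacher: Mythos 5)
Your plan is essentially the paper's own argument: write the general $\Ad(F^3)$-invariant pseudo-Riemannian form $(\alpha,\beta,\gamma)\mapsto\alpha\langle\cdot,\cdot\rangle\oplus\beta\langle\cdot,\cdot\rangle\oplus\gamma\langle\cdot,\cdot\rangle$ on $\mathfrak{f}^3$, solve the resulting $3\times3$ system to match a given invariant metric (the paper gets the explicit solution $(\alpha,\beta,\gamma)=(D/(B+C),-D/B,D/(A+B))$ with $D=AC-B^2$), and on the degeneracy locus where a coefficient or $\alpha+\beta+\gamma$ vanishes, exhibit one of the three $F\times F$ subgroups as the group making the metric normal homogeneous. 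The only inessential differences are your choice of coordinates (pairwise difference subspaces and $S_3$-symmetry rather than the paper's quadratic form $Ax^2+2Bxy+Cy^2$ on $\mathfrak{f}\oplus\mathfrak{f}\oplus 0$) and the fact that your sketch leaves the final algebra to be verified, whereas the paper carries it out explicitly.
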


\begin{proof} Note that any irreducible $\Ad(\diag(F))$-invariant submodule in $\mathfrak{g}=\mathfrak{f}\oplus \mathfrak{f}\oplus \mathfrak{f}$ has the form
$\left\{(\alpha_1 X, \alpha_2 X, \alpha_{3} X)\subset \mathfrak{g}\,|\, X \in \mathfrak{f}\right\}$ for some fixed $\alpha_i$,
see details e.~g. in~\cite{CNN2017}.
In particular, $\mathfrak{h}=\diag(\mathfrak{f})=\left\{(X,X,X)\subset \mathfrak{g}\,|\, X \in \mathfrak{f}\right\}$.
We can choose $\Ad(\diag(F))$-invariant complement to $\mathfrak{h}$ in $\mathfrak{g}$ as follows:
$\mathfrak{p}_1=\mathfrak{f}\oplus \mathfrak{f}\oplus 0\subset \mathfrak{g}$.
Any inner product $(\cdot,\cdot)$ on $\mathfrak{p}_1$ is determined by a positive definite form $f(x,y)=Ax^2+2Bxy+Cy^2$, $x,y \in \mathbb{R}$, by the following
equality:
$$
\Bigl((x Z, y Z,0), (x Z, y Z,0) \Bigr)=f(x,y)\cdot \langle Z, Z \rangle, \qquad Z \in \mathfrak{f},
$$
where $\langle \cdot, \cdot \rangle$ is the minus Killing form of $\mathfrak{f}$. Let us fix such $(\cdot,\cdot)$.
If $B=0$, then $(\cdot, \cdot)$ is normal homogeneous (and even bi-invariant) with respect to $F\times F=F\times F\times e \subset F^3$ (since $A>0$ and $C>0$).
Therefore, all metrics with $B=0$ are naturally reductive.
\smallskip

Let us consider also the following two $\Ad(\diag(F))$-invariant complement to $\mathfrak{h}$ in $\mathfrak{g}$:
$\mathfrak{p}_2=\mathfrak{f}\oplus 0\oplus \mathfrak{f}\subset \mathfrak{g}$ and $\mathfrak{p}_3=0\oplus \mathfrak{f}\oplus \mathfrak{f}\subset \mathfrak{g}$.
\smallskip

If $A+B=0$, then $C>B$ (since $AC>B^2=A^2$) and $Ax^2+2Bxy+Cy^2=B(x-y)^2+(C-B)y^2$. Since the vector
$(x,y,0)\in \mathfrak{p}_1$ is projected to $(x-y,0,-y)\in \mathfrak{p}_2$ along  $\mathfrak{h}$, we know that the metric $(\cdot,\cdot)$ for $A+B=0$
is normal homogeneous (and even bi-invariant) with respect to $F\times F=F\times e\times  F \subset F^3$.
\smallskip

If $B+C=0$, then $A>B$ (since $AC>B^2=C^2$) and $Ax^2+2Bxy+Cy^2=C(y-x)^2+(A-C)x^2$. Since the vector
$(x,y,0)\in \mathfrak{p}_1$ is projected to $(0,y-x,-x)\in \mathfrak{p}_3$ along  $\mathfrak{h}$, we know that the metric $(\cdot,\cdot)$ for $B+C=0$
is normal homogeneous (and even bi-invariant) with respect to $F\times F=e\times F\times  F \subset F^3$.
\smallskip

Hence, for $B(A+B)(B+C)=0$ we get naturally reductive metrics $(\cdot,\cdot)$ with respect to some subgroups $F^2$ in $F^3$.
\smallskip

Now, let us assume that $B\neq 0$, $A+B\neq 0$, and $B+C\neq 0$.
We can supply $\mathfrak{g}$ with a 3-parameter family of bi-invariant pseudo-Riemannian metrics of the form
$$
(\cdot, \cdot)_1=\alpha \langle \cdot, \cdot \rangle|_{(\mathfrak{f},0,0)}+\beta \langle \cdot, \cdot \rangle|_{(0,\mathfrak{f},0)}+
\gamma \langle \cdot, \cdot \rangle|_{(0,0,\mathfrak{f})}\,, \qquad \alpha,\beta,\gamma \neq 0.
$$
Let $\mathfrak{p}_1$ is the $(\cdot, \cdot)_1$-orthogonal complement to $\mathfrak{h}$ in $\mathfrak{g}$ (for the case $\alpha+\beta+\gamma\neq 0$), i.~e.
$$
\mathfrak{p}_1=\left\{\Lin \bigl((a X, b X,c X)\bigr) \subset \mathfrak{g}\,|\, X \in \mathfrak{f},\, a\alpha+b\beta+c\gamma=0\right\}.
$$
The restriction of $(\cdot, \cdot)_1$ to $\mathfrak{p}_1$ determines a naturally reductive homogeneous Riemannian metric on $G/H$.
Now, we will prove that there are $\alpha,\beta,\gamma \neq 0$, $\alpha+\beta+\gamma\neq 0$,
such that this metric is the same as the metric generated with $(\cdot,\cdot)$.
If $\pi$ is the projection $\mathfrak{p}_1 \rightarrow \mathfrak{p}$ along $\mathfrak{h}$,
then $\pi \bigl((a X, b X,c X)\bigr)=\bigl((a-c)X, (b-c)X, 0 \bigr)$.
Further,
\begin{eqnarray*}
\Bigr((a X, b X,c X),(a X, b X,c X)\Bigl)_1&=&(a^2\alpha+b^2\beta+c^2\gamma)\cdot \langle X, X \rangle, \\
\Bigl(\big((a-c)X, (b-c)X, 0 \bigr),\bigl( (a-c)X, (b-c)X, 0 \bigr) \Bigr)&=&f((a-c),(b-c))\cdot \langle X, X \rangle,
\end{eqnarray*}
where $X \in \mathfrak{f}$.
These two values are equal (hence, $(\cdot, \cdot)_1$ and $(\cdot, \cdot)$ are isometric)
if and only if $a^2\alpha+b^2\beta+c^2\gamma=f((a-c),(b-c))$. Recall that $c=-(a\alpha+b\beta)/\gamma$, hence we get the equality
\begin{eqnarray*}
&& a^2\alpha\gamma^2+b^2\beta\gamma^2+(a\alpha+b\beta)^2\gamma \\
&=& A((\alpha+\gamma)a+\beta b)^2+2B((\alpha+\gamma)a+\beta b)(\alpha a+(\beta+\gamma)b)+C(\alpha a+(\beta+\gamma)b)^2
\end{eqnarray*}
for every $a,b \in \mathbb{R}$, which is equivalent to the following system (since $a$ and $b$ could be arbitrary):
\begin{eqnarray*}
\alpha \gamma (\alpha+\gamma)&=&A(\alpha+\gamma)^2+2B(\alpha+\gamma) \alpha+ C\alpha^2; \\
\beta \gamma (\beta+\gamma)&=&A\beta^2+2B\beta(\beta+\gamma)+ C(\beta+\gamma)^2; \\
\alpha \beta \gamma&=&A\beta(\alpha+\gamma)+B((\alpha+\gamma)(\beta+\gamma)+\alpha \beta)+ C\alpha(\beta+\gamma).
\end{eqnarray*}
It is easy to see that this system is equivalent to the following one:
\begin{eqnarray*}
A=\frac{\alpha(\beta+\gamma)}{\alpha+\beta+\gamma}, \quad
B=\frac{-\alpha\beta}{\alpha+\beta+\gamma}, \quad
C=\frac{\beta(\alpha+\gamma)}{\alpha+\beta+\gamma}\,,
\end{eqnarray*}
and has the following solution:
$(\alpha, \beta,\gamma)=\left(\frac{D}{B+C}, -\frac{D}{B}, \frac{D}{A+B}\right)$, where $D=AC-B^2>0$. Obviously,
$\alpha \beta \gamma\neq 0$.
Note also that $\alpha+\beta+\gamma=-\frac{D^2}{B(A+B)(B+C)}\neq 0$.
Therefore, all invariant Riemannian metrics with the property $B(A+B)(B+C)\neq 0$
are naturally reductive with respect to $F\times F\times F$.
\end{proof}

\begin{remark}
The classification of geodesic orbit metrics on the Ledger--Obata spaces $F^m/\diag(F)$ for $m \geq 4$ is essentially much more complicated
than that for $m=3$ and is not known by now.
\end{remark}

\section{The case for $G$ simple}
This section is to prove Theorem~\ref{maimres}. The proof is based on the classification of $G/H$ with $G$ simple and two irreducible
submodules in the isotropy representation which is given by W.~Dickinson and M.~Kerr in the paper \cite{DickKerr}. In that paper, they
divide such homogeneous spaces into two classes:
\begin{enumerate}
  \item[(i)] $H$ is maximal in $G$;
  \item[(ii)] there exists a subgroup $K$ of $G$ such that $H\subset K\subset G$.
\end{enumerate}
We will prove Theorem~\ref{maimres} by the following steps.
\begin{enumerate}
   \item Firstly, we give some necessary conditions for $G/H$ to be geodesic orbit.
   \item Furthermore, by the result in (1), we prove that if $G/H$ is geodesic orbit, there exists a subgroup $K$ of $G$ such that $H\subset K\subset G$
   and $G/K$ is symmetric. In fact, such geodesic orbit spaces $G/H$ belong to the list given in \cite{Ta}.
   \item Finally, by comparing the table of \cite{Ta} with the list in \cite{DickKerr}, we get Theorem~\ref{maimres}.
\end{enumerate}

Suppose that $G/H$ is compact, $\langle \cdot,\cdot \rangle$ is an $\Ad(G)$-invariant inner product on $\mathfrak{g}$, $\mathfrak{m}$ is
the $\langle \cdot,\cdot \rangle$-orthogonal complement to $\mathfrak{h}$ in $\mathfrak{g}$.
Let $\mathfrak{m}=\mathfrak{m}_1\oplus \mathfrak{m}_2$, where $\langle \mathfrak{m}_1, \mathfrak{m}_2 \rangle=0$ and both
$\mathfrak{m}_1$ and $\mathfrak{m}_2$ are $\Ad(G)$-invariant. Consider the following two-parameter family of $G$-invariant metrics:
\begin{equation}\label{twopar1}
(\cdot,\cdot)=\lambda \langle \cdot,\cdot \rangle|_{\mathfrak{m}_1}+\mu \langle \cdot,\cdot \rangle|_{\mathfrak{m}_2},\quad \lambda \neq 0,\, \mu \neq 0.
\end{equation}
Such metrics correspond to the metric operators $A=A_{\lambda,\mu}=\lambda \Id|_{\mathfrak{m}_1}+\mu  \Id|_{\mathfrak{m}_2}$, i.~e.
$(\cdot,\cdot)=\langle A \cdot,\cdot \rangle|_{\mathfrak{m}}$.
If $\lambda = \mu$, we always get $G$-normal, hence geodesic orbit metrics.

\begin{remark}
Among compact simply connected spaces $G/H$ with simple $G$ and two irreducible submodules in the isotropy representation,
only the space $Spin(8)/G_2$ (which is a double cover of $SO(8)/G_2$) has two isomorphic irreducible submodules, see \cite{DickKerr,Kerr98}.
Hence any $G$-invariant metric on such spaces $G/H$ except $Spin(8)/G_2$ has the form~(\ref{twopar1}).
\end{remark}

First we study the space $Spin(8)/G_2$, which is diffeomorphic to $S^7 \times S^7$.
Clearly the space of $Spin(8)$-invariant metric on $Spin(8)/G_2$ has dimension $3$, see a detailed exposition in~\cite{Kerr98}.
Moreover, the following is a well-known fact.

\begin{prop}[\cite{Zil96}]\label{prop.zil}
Every $Spin(8)$-invariant metric on $Spin(8)/G_2$ is weakly symmetric, hence, geodesic orbit.
\end{prop}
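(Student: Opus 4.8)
The plan is to prove weak symmetry by producing, for every tangent vector at the base point, an isotropy element that reverses it. Recall that a homogeneous Riemannian manifold $(M,g)$ is weakly symmetric as soon as for every $p\in M$ and every $v\in T_pM$ there is an isometry $s$ with $s(p)=p$ and $ds_p(v)=-v$: given two points $p,q$, one applies this to the midpoint $m$ of a minimizing geodesic joining them and to the geodesic's tangent vector at $m$, and the resulting isometry reverses that geodesic about $m$, hence interchanges $p$ and $q$ (see \cite{BKV,Zil96}). By $Spin(8)$-homogeneity it therefore suffices to treat $p=o=eH$, and I will in fact produce the required $s$ inside the isotropy group $H=G_2$, whose elements fix $o$ and act on $T_oM=\mathfrak m$ via $\Ad(g)|_{\mathfrak m}$. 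Once every $X\in\mathfrak m$ is reversed by some $\Ad(g)|_{\mathfrak m}$, $g\in G_2$, the space is weakly symmetric, and hence geodesic orbit by the result of Berndt, Kowalski and Vanhecke quoted in the introduction.

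The first step is to fix the structure of the isotropy module. Here $\mathfrak g=\mathfrak{spin}(8)$, $\mathfrak h=\mathfrak g_2$, and $\mathfrak m$ splits as the sum of two \emph{isomorphic} $7$-dimensional irreducible $\Ad(G_2)$-modules (cf. the remark preceding the proposition and \cite{Kerr98,DickKerr}). Since $G_2$ has a unique irreducible $7$-dimensional representation $\rho\colon G_2\to SO(7)$ — the standard one on $\mathbb{R}^7=\mathrm{Im}\,\mathbb{O}$ — one can identify $\mathfrak m$ with $\mathbb{R}^7\otimes\mathbb{R}^2$, where $\mathbb{R}^2$ is a two-dimensional multiplicity space with trivial $G_2$-action, in such a way that $\Ad(g)|_{\mathfrak m}=\rho(g)\otimes\Id_{\mathbb{R}^2}$ for all $g\in G_2$. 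By Schur's lemma every $Spin(8)$-invariant metric then corresponds to $\langle\cdot,\cdot\rangle_{\mathbb{R}^7}\otimes Q$ for a positive definite symmetric form $Q$ on $\mathbb{R}^2$ (a $3$-dimensional family, in accordance with the dimension count stated above), and in particular each operator $\rho(g)\otimes\Id$ is an isometry of it, whatever $Q$ is.

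The geometric heart of the argument is then elementary. The group $G_2$ acts transitively on the Stiefel manifold $V_2(\mathbb{R}^7)$ of orthonormal $2$-frames of $\mathbb{R}^7$: indeed $G_2$ acts transitively on $S^6=G_2/SU(3)$, and the stabilizer $SU(3)$ acts transitively on the unit sphere $S^5=SU(3)/SU(2)$ of the orthogonal complement of the chosen vector. Hence for any orthonormal $2$-frame $\{e_1,e_2\}$ of $\mathbb{R}^7$ there is $g\in G_2$ with $\rho(g)e_1=-e_1$ and $\rho(g)e_2=-e_2$, because $\{-e_1,-e_2\}$ is again an orthonormal $2$-frame; equivalently, for every $2$-plane $W\subset\mathbb{R}^7$ there is $g\in G_2$ with $\rho(g)|_W=-\Id_W$. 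Now take an arbitrary $X\in\mathfrak m$ and write $X=v_1\otimes e_1+v_2\otimes e_2$ for a basis $\{e_1,e_2\}$ of $\mathbb{R}^2$; the vectors $v_1,v_2\in\mathbb{R}^7$ span a subspace of dimension at most $2$, which I enlarge if necessary to a $2$-plane $W$. Choosing $g\in G_2$ with $\rho(g)|_W=-\Id_W$ gives $\Ad(g)(X)=(\rho(g)\otimes\Id)(X)=-v_1\otimes e_1-v_2\otimes e_2=-X$, so the geodesic-reversing condition holds at $o$ for every $X$.

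I expect the only delicate points to be the two representation-theoretic inputs: that the isotropy module is genuinely two copies of the standard $7$-dimensional $G_2$-module with the tensor-product action above (so that the isotropy acts through $SO(\mathbb{R}^7)$-symmetries that do not see the ``scales'' recorded by $Q$), and the transitivity of $G_2$ on orthonormal $2$-frames; both are classical, so the proof should be genuinely short. No case analysis in $Q$ is needed, and triality — which one might have expected to use — is not required here; as a sanity check, when $Q$ is a multiple of the identity one recovers the fact that $Spin(8)/G_2$ with that metric is the symmetric space $S^7\times S^7$ equipped with a product metric of equal radii.
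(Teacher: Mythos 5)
The paper does not prove this proposition; it is stated as a citation to Ziller \cite{Zil96}, so there is no in-text argument to compare against. Your proposal supplies a correct, self-contained proof, and the route is sound. The key points all check out: the weak-symmetry criterion you invoke (for every $p$ and $v\in T_pM$ there is $s\in G$ with $s(p)=p$ and $ds_p(v)=-v$; then reverse a minimizing geodesic at its midpoint) is standard and correctly applied; the decomposition $\mathfrak{m}\cong\mathbb R^7\oplus\mathbb R^7$ into two copies of the standard $G_2$-module follows from $\mathfrak{so}(8)=\mathfrak{so}(7)\oplus\mathbb R^7$ and $\mathfrak{so}(7)=\mathfrak g_2\oplus\mathbb R^7$; the Schur-lemma description of the invariant metrics as $\langle\cdot,\cdot\rangle_{\mathbb R^7}\otimes Q$ on $\mathbb R^7\otimes\mathbb R^2$ uses that $\mathbb R^7$ is absolutely irreducible and self-dual over $\mathbb R$, so its invariant-form space is one-dimensional; and the transitivity of $G_2$ on $V_2(\mathbb R^7)$ via $G_2/SU(3)=S^6$ and $SU(3)/SU(2)=S^5$ is a classical fact, from which the existence of $g\in G_2$ with $\rho(g)|_W=-\Id_W$ for any $2$-plane $W\subset\mathbb R^7$ follows immediately. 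Writing an arbitrary $X\in\mathfrak m$ as $v_1\otimes e_1+v_2\otimes e_2$ and choosing $W\supset\operatorname{span}\{v_1,v_2\}$ then yields $\Ad(g)X=-X$, and since $g\in G_2=H\subset G$ this proves weak symmetry with respect to $G=Spin(8)$ itself, which is what the paper's definition requires. What your approach buys over the citation is an explicit, elementary verification that involves no appeal to the structure theory or classification of weakly symmetric spaces and, as you note, no use of triality; the only inputs are the $G_2$-isotypic structure of $\mathfrak m$ and the transitivity of $G_2$ on $2$-frames.
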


Note that $Spin(8)/G_2\cong S^7 \times S^7$ admits a one-parameter family of  $Spin(8)$-normal homogeneous metrics and a two-parameter family
of $(SO(8)\times SO(8))$-normal homogeneous metrics. Any other $Spin(8)$-invariant metric on $Spin(8)/G_2$ is not naturally reductive.

\smallskip

In the rest of the section we deal with
compact simply connected spaces $G/H$ with simple $G$ and two irreducible submodules in the isotropy representation except $Spin(8)/G_2$.
Recall that every $G$-invariant metric on every such spaces $G/H$ has the form (\ref{twopar1}).

\begin{lemma}[\cite{AA,AN}]\label{GO-criterion1}
The $G$-invariant Riemannian metric on  $G/H$ with the metric operator $A=A_{\lambda,\mu}$, $\lambda \neq \mu$,  is geodesic orbit if and
only if for any $X \in \mathfrak{m}_1$ and any $Y \in \mathfrak{m}_2$ there is
$Z \in \mathfrak{h}$ such that
$[Z +X+Y, A(X+Y)]\in \mathfrak{h}$, or equivalently $[X,Y]=\frac{\lambda}{\lambda-\mu}[Z,X]+\frac{\mu}{\lambda-\mu}[Z,Y]$.
\end{lemma}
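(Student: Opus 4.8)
The final statement to prove is Lemma~\ref{GO-criterion1}, the GO-criterion specialized to a two-parameter metric with $A=A_{\lambda,\mu}$.

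\textbf{Plan of proof.} The plan is to start from the general GO-criterion of Lemma~\ref{GO-criterion} and simply unwind it for the specific reductive decomposition $\mathfrak{m}=\mathfrak{m}_1\oplus\mathfrak{m}_2$ and the metric operator $A=A_{\lambda,\mu}$. Given an arbitrary tangent vector $W\in\mathfrak{m}$, write $W=X+Y$ with $X\in\mathfrak{m}_1$, $Y\in\mathfrak{m}_2$. The condition $([W+Z,V]_{\mathfrak{m}},W)=0$ for all $V\in\mathfrak{m}$, where $(\cdot,\cdot)=\langle A\cdot,\cdot\rangle$, is equivalent to $\langle [W+Z,V], AW\rangle=0$ for all $V\in\mathfrak{m}$ (the $\mathfrak{m}$-projection can be dropped since $AW\in\mathfrak{m}$ and $\langle\mathfrak{h},\mathfrak{m}\rangle=0$). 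Because $\langle\cdot,\cdot\rangle$ is $\Ad(G)$-invariant, we may move the bracket to the other side: $\langle [W+Z,V],AW\rangle=-\langle V,[W+Z,AW]\rangle$. Since $V$ ranges over all of $\mathfrak{m}$, the vanishing for all $V\in\mathfrak{m}$ is equivalent to $[W+Z,AW]\in\mathfrak{h}$, i.e. $[Z+X+Y,A(X+Y)]\in\mathfrak{h}$. This already gives the first equivalent form in the lemma, with the only observation that it suffices to check $X\in\mathfrak{m}_1$, $Y\in\mathfrak{m}_2$ rather than all $W$, which is automatic since every $W$ decomposes that way.

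\textbf{Passing to the second form.} Next I would expand $[Z+X+Y,A(X+Y)]$ using $A X=\lambda X$, $A Y=\mu Y$. One gets
\[
[Z+X+Y,\lambda X+\mu Y]=\lambda[Z,X]+\mu[Z,Y]+(\mu-\lambda)[X,Y],
\]
using $[X,X]=[Y,Y]=0$. Now I need to take the $\mathfrak{h}$-component condition apart. The key structural point (which holds for these spaces with two \emph{irreducible inequivalent} modules, and in fact already uses $\lambda\neq\mu$ only to the extent needed below) is to look at which summands the various brackets land in. Since $\mathfrak{m}_1,\mathfrak{m}_2$ are $\Ad(H)$-submodules, $[Z,X]\in\mathfrak{m}_1$ and $[Z,Y]\in\mathfrak{m}_2$ for $Z\in\mathfrak{h}$; the bracket $[X,Y]$ can have components in $\mathfrak{h}$, $\mathfrak{m}_1$, and $\mathfrak{m}_2$. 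Requiring the whole expression to lie in $\mathfrak{h}$ forces its $\mathfrak{m}_1$-part $\lambda[Z,X]+(\mu-\lambda)[X,Y]_{\mathfrak{m}_1}$ and its $\mathfrak{m}_2$-part $\mu[Z,Y]+(\mu-\lambda)[X,Y]_{\mathfrak{m}_2}$ to vanish separately, while the $\mathfrak{h}$-part is automatically in $\mathfrak{h}$. Adding these two vanishing identities and noting $[X,Y]_{\mathfrak{h}}$ contributes nothing to either $\mathfrak{m}_i$-component, one obtains $(\mu-\lambda)[X,Y]_{\mathfrak{m}}=\lambda[Z,X]+\mu[Z,Y]$ is not quite right — rather, summing the two gives $(\mu-\lambda)([X,Y]_{\mathfrak{m}_1}+[X,Y]_{\mathfrak{m}_2})+\lambda[Z,X]+\mu[Z,Y]=0$; since $[Z,X]\in\mathfrak{m}_1$, $[Z,Y]\in\mathfrak{m}_2$ and $[X,Y]-[X,Y]_{\mathfrak{h}}=[X,Y]_{\mathfrak{m}_1}+[X,Y]_{\mathfrak{m}_2}$, this is equivalent to $(\lambda-\mu)[X,Y]=\lambda[Z,X]+\mu[Z,Y]+(\lambda-\mu)[X,Y]_{\mathfrak{h}}$. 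To get the clean formula stated, I would absorb the $\mathfrak{h}$-valued term: replacing $Z$ by a suitable $Z'\in\mathfrak{h}$ (using that $[Z,X]$, $[Z,Y]$ run over enough of $\mathfrak{m}_1$, $\mathfrak{m}_2$, or more directly just observing that the statement "$[Z+X+Y,A(X+Y)]\in\mathfrak{h}$ for some $Z$" is invariant under modifying the conclusion by $\mathfrak{h}$), one arrives at the existence of $Z\in\mathfrak{h}$ with $[X,Y]=\tfrac{\lambda}{\lambda-\mu}[Z,X]+\tfrac{\mu}{\lambda-\mu}[Z,Y]$, dividing by $\lambda-\mu\neq0$.

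\textbf{Main obstacle.} The routine part is the invariance manipulation in the first paragraph; the delicate point is the bookkeeping in the second paragraph — namely, being careful that "the $\mathfrak{m}$-component of $[Z+X+Y,A(X+Y)]$ vanishes" is genuinely equivalent, after an appropriate redefinition of $Z$, to the displayed equation including the $\mathfrak{h}$-valued bracket $[X,Y]_{\mathfrak{h}}$. I expect this to be the only place requiring real care; it hinges on the fact that adding an element of $\mathfrak{h}$ to the left-hand side of $[X,Y]=\cdots$ can be compensated, which is immediate once we remember the equivalent membership formulation $[Z+X+Y,A(X+Y)]\in\mathfrak{h}$ and simply rearrange that relation. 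No case distinction among the spaces in Proposition~\ref{case5.1p} is needed; the argument is purely formal, relying only on $\Ad(G)$-invariance of $\langle\cdot,\cdot\rangle$, the $\Ad(H)$-invariance of $\mathfrak{m}_1,\mathfrak{m}_2$, and $\lambda\neq\mu$.
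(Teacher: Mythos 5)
Your first paragraph is correct and is the standard argument: starting from Lemma~\ref{GO-criterion}, using $(\cdot,\cdot)=\langle A\cdot,\cdot\rangle$, the $\Ad(G)$-invariance of $\langle\cdot,\cdot\rangle$, and the $\langle\cdot,\cdot\rangle$-orthogonality $\langle\mathfrak{h},\mathfrak{m}\rangle=0$, one arrives exactly at the membership condition $[Z+X+Y,A(X+Y)]\in\mathfrak{h}$. Since the paper merely cites this lemma from \cite{AA,AN} and gives no proof, this is essentially the only argument one can give, and you reproduce it correctly. The bracket expansion $[Z+X+Y,\lambda X+\mu Y]=\lambda[Z,X]+\mu[Z,Y]+(\mu-\lambda)[X,Y]$ is also right.

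The gap is in the passage from the membership form to the displayed equation. From $\lambda[Z,X]+\mu[Z,Y]+(\mu-\lambda)[X,Y]\in\mathfrak{h}$, since $[Z,X]\in\mathfrak{m}_1$ and $[Z,Y]\in\mathfrak{m}_2$, what one actually obtains (and all one obtains) is
$(\lambda-\mu)\,[X,Y]_{\mathfrak{m}}=\lambda[Z,X]+\mu[Z,Y]$,
i.e.\ the identity with the $\mathfrak{m}$-projection $[X,Y]_{\mathfrak{m}}$ on the left. Your ``absorption'' step — replacing $Z$ by some $Z'$, or ``modifying the conclusion by $\mathfrak{h}$'' — cannot upgrade this to the literal equation $[X,Y]=\frac{\lambda}{\lambda-\mu}[Z,X]+\frac{\mu}{\lambda-\mu}[Z,Y]$: for every $Z'\in\mathfrak{h}$ the terms $[Z',X]$ and $[Z',Y]$ lie in $\mathfrak{m}_1$ and $\mathfrak{m}_2$ respectively, so no choice of $Z$ can produce the $\mathfrak{h}$-component $[X,Y]_{\mathfrak{h}}$ on the right-hand side; and the second form is an equality in $\mathfrak{g}$, not a membership statement, so there is nothing to ``modify by $\mathfrak{h}$.'' In fact, the displayed equation taken literally is strictly stronger than the membership form: it additionally forces $[X,Y]_{\mathfrak{h}}=0$. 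The two are equivalent only under the extra hypothesis $[\mathfrak{m}_1,\mathfrak{m}_2]\cap\mathfrak{h}=0$ (which does hold, for instance, when $\mathfrak{k}=\mathfrak{h}\oplus\mathfrak{m}_1$ is a subalgebra with $[\mathfrak{m}_1,\mathfrak{m}_2]\subset\mathfrak{m}_2$, as in the Tamaru-type fibrations used later in the paper). The correct repair is to replace $[X,Y]$ by $[X,Y]_{\mathfrak{m}}$ in the second form — or to state the additional structural hypothesis under which the two coincide — not to reassign $Z$.
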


We will denote by $C_{\mathfrak{h}}(U)$ and $N_{\mathfrak{h}}(U)$ the centralizer and the normalizer  of a vector $U\in \mathfrak{g}$ in $\mathfrak{h}$
respectively.
Note that for any $U\in \mathfrak{g}$, we have the following direct sum of Lie algebras:
\begin{equation}\label{normcentr}
N_{\mathfrak{h}}(C_{\mathfrak{h}}(U))=C_{\mathfrak{h}}(U)\oplus \widetilde{C}_{\mathfrak{h}}(U),
\end{equation}
where $\widetilde{C}_{\mathfrak{h}}(U)$ is the $\langle \cdot,\cdot \rangle$-orthogonal complement to $C_{\mathfrak{h}}(U)$ in
$N_{\mathfrak{h}}(C_{\mathfrak{h}}(U))$, in particular, $[\widetilde{C}_{\mathfrak{h}}(U),C_{\mathfrak{h}}(U)]=0$.
\smallskip

We specify Lemma \ref{GO-criterion1}, using the idea of the geodesic graph, see \cite{KV}.
It should be noted that $C_{\mathfrak{h}}(X+Y)=C_{\mathfrak{h}}(X)\cap C_{\mathfrak{h}}(Y)$ for any $X \in \mathfrak{m}_1$ and any $Y \in \mathfrak{m}_2$.

\begin{lemma}\label{GO-criterion2}
The $G$-invariant Riemannian metric on  $G/H$ with the metric operator $A_{\lambda,\mu}$, $\lambda \neq \mu$,  is geodesic orbit if and
only if for any $X \in \mathfrak{m}_1$ and any $Y \in \mathfrak{m}_2$ there is a unique
$Z \in \widetilde{C}_{\mathfrak{h}}(X+Y)$ such that
$[X,Y]=\frac{\lambda}{\lambda-\mu}[Z,X]+\frac{\mu}{\lambda-\mu}[Z,Y]$.
\end{lemma}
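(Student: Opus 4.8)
The plan is to refine Lemma~\ref{GO-criterion1} by analyzing the structure of the set of admissible elements $Z\in\mathfrak{h}$ that satisfy the geodesic graph equation $[X,Y]=\frac{\lambda}{\lambda-\mu}[Z,X]+\frac{\mu}{\lambda-\mu}[Z,Y]$ for a fixed pair $X\in\mathfrak{m}_1$, $Y\in\mathfrak{m}_2$. Write $U=X+Y$. First I would observe that if $Z$ and $Z'$ are two such elements, then their difference $W=Z-Z'$ satisfies $\frac{\lambda}{\lambda-\mu}[W,X]+\frac{\mu}{\lambda-\mu}[W,Y]=0$; taking the $\mathfrak{m}_1$- and $\mathfrak{m}_2$-components separately (which is legitimate since the decomposition $\mathfrak{g}=\mathfrak{h}\oplus\mathfrak{m}_1\oplus\mathfrak{m}_2$ is $\Ad(H)$-invariant, so $[W,X]\in\mathfrak{h}\oplus\mathfrak{m}_1$ and $[W,Y]\in\mathfrak{h}\oplus\mathfrak{m}_2$, and the equation already forces the $\mathfrak{h}$-parts to vanish) yields $\lambda[W,X]=0$ and $\mu[W,Y]=0$, hence $[W,X]=[W,Y]=0$ because $\lambda,\mu\neq 0$. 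Thus $W\in C_{\mathfrak{h}}(X)\cap C_{\mathfrak{h}}(Y)=C_{\mathfrak{h}}(U)$, so the solution set, when nonempty, is a single coset $Z_0+C_{\mathfrak{h}}(U)$.

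Next I would show that one may always choose the representative inside $\widetilde{C}_{\mathfrak{h}}(U)$, and that this choice is unique. Decompose any solution $Z_0$ according to $\mathfrak{h}=N_{\mathfrak{h}}(C_{\mathfrak{h}}(U))\oplus N_{\mathfrak{h}}(C_{\mathfrak{h}}(U))^{\perp}$ and then refine the first summand via \eqref{normcentr} as $C_{\mathfrak{h}}(U)\oplus\widetilde{C}_{\mathfrak{h}}(U)$. The point is that the component of $Z_0$ lying in $N_{\mathfrak{h}}(C_{\mathfrak{h}}(U))^{\perp}$ must actually vanish: I would argue this by using the $\Ad(H)$-invariance of $\langle\cdot,\cdot\rangle$ together with the fact that $C_{\mathfrak{h}}(U)$ preserves $U$ (so the relevant bracket terms pair trivially against the orthogonal complement of the normalizer). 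Concretely, pairing the graph equation against elements of $C_{\mathfrak{h}}(U)$ and using $\ad$-skew-symmetry shows the solution can be corrected within $N_{\mathfrak{h}}(C_{\mathfrak{h}}(U))$; then subtracting the $C_{\mathfrak{h}}(U)$-part lands the unique representative in $\widetilde{C}_{\mathfrak{h}}(U)$. Since $\widetilde{C}_{\mathfrak{h}}(U)\cap C_{\mathfrak{h}}(U)=0$, distinct elements of $\widetilde{C}_{\mathfrak{h}}(U)$ give distinct cosets, so within $\widetilde{C}_{\mathfrak{h}}(U)$ the solution, if it exists, is unique.

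Combining these two observations: the metric $A_{\lambda,\mu}$ is geodesic orbit if and only if for every $X\in\mathfrak{m}_1$, $Y\in\mathfrak{m}_2$ there exists $Z\in\mathfrak{h}$ solving the graph equation (Lemma~\ref{GO-criterion1}), which by the coset description is equivalent to the existence of such $Z$ in the complement $\widetilde{C}_{\mathfrak{h}}(X+Y)$, and this $Z$ is then automatically unique. The identity $C_{\mathfrak{h}}(X+Y)=C_{\mathfrak{h}}(X)\cap C_{\mathfrak{h}}(Y)$ noted before the lemma is used to identify the centralizer appearing in \eqref{normcentr} with $U=X+Y$.

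The main obstacle I anticipate is the second paragraph: showing rigorously that one can push a solution $Z_0$ into $\widetilde{C}_{\mathfrak{h}}(X+Y)$, i.e.\ that the component outside $N_{\mathfrak{h}}(C_{\mathfrak{h}}(U))$ can be discarded without destroying the graph equation. This requires a careful use of the orthogonality and invariance properties encoded in \eqref{normcentr}, in particular the relation $[\widetilde{C}_{\mathfrak{h}}(U),C_{\mathfrak{h}}(U)]=0$, together with an argument that any admissible $Z_0$ normalizes $C_{\mathfrak{h}}(U)$ (which follows because $C_{\mathfrak{h}}(U)$ is the centralizer of $U$ and the graph equation ties $\ad Z_0$ to $\ad$ of the fixed vector $U$ up to the bracket $[X,Y]$). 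The uniqueness part and the coset computation are routine once the linear-algebra bookkeeping of the three-fold orthogonal decomposition is set up correctly.
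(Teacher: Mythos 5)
Your overall structure matches the paper's argument---reduce Lemma \ref{GO-criterion1} to the present statement via a coset description of the solution set, then locate a canonical representative in $\widetilde{C}_{\mathfrak{h}}(X+Y)$. Your first paragraph (uniqueness modulo $C_{\mathfrak{h}}(X+Y)$) is correct and is essentially what the paper does. But the decisive step---that any solution $Z$ of the graph equation automatically lies in $N_{\mathfrak{h}}(C_{\mathfrak{h}}(X+Y))$---is left as an unresolved ``obstacle'' in your write-up, and the mechanism you sketch for it is not the right one.

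Concretely: the graph equation is an identity between elements of $\mathfrak{m}$, while $C_{\mathfrak{h}}(X+Y)$ sits inside $\mathfrak{h}$, which is $\langle\cdot,\cdot\rangle$-orthogonal to $\mathfrak{m}$; so ``pairing the graph equation against elements of $C_{\mathfrak{h}}(X+Y)$'' produces only $0=0$ and the $\ad$-skew-symmetry you invoke cannot extract the normalizer condition. Likewise, the detour through the orthogonal decomposition $\mathfrak{h}=N_{\mathfrak{h}}(C_{\mathfrak{h}}(U))\oplus N_{\mathfrak{h}}(C_{\mathfrak{h}}(U))^{\perp}$ is unnecessary once the normalizer membership is known. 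What is actually needed is the following short Jacobi-identity computation, which is what the paper does: take any $U\in C_{\mathfrak{h}}(X+Y)=C_{\mathfrak{h}}(X)\cap C_{\mathfrak{h}}(Y)$ and apply $\ad(U)$ to both sides of the graph equation. Since $[U,X]=[U,Y]=0$, Jacobi gives $[U,[X,Y]]=0$ and $[U,[Z,X]]=[[U,Z],X]$, $[U,[Z,Y]]=[[U,Z],Y]$, so
$$0=\frac{\lambda}{\lambda-\mu}[[U,Z],X]+\frac{\mu}{\lambda-\mu}[[U,Z],Y].$$
The two summands lie in $\mathfrak{m}_1$ and $\mathfrak{m}_2$ respectively, hence both vanish (as $\lambda,\mu\neq 0$), giving $[U,Z]\in C_{\mathfrak{h}}(X+Y)$ and therefore $Z\in N_{\mathfrak{h}}(C_{\mathfrak{h}}(X+Y))$. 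From there, subtracting the $C_{\mathfrak{h}}(X+Y)$-component of $Z$ (which by \eqref{normcentr} does not affect the graph equation) lands the unique representative in $\widetilde{C}_{\mathfrak{h}}(X+Y)$, and your coset argument finishes the proof. You correctly diagnosed that the normalizer membership is the crux, and even alluded to ``$\ad Z_0$ being tied to $\ad$ of $U$'' at the end; replacing the vague pairing idea with this explicit $\ad(U)$/Jacobi computation closes the gap.
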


\begin{proof} If $Z \in \mathfrak{h}$ such that
$[X,Y]=\frac{\lambda}{\lambda-\mu}[Z,X]+\frac{\mu}{\lambda-\mu}[Z,Y]$, then $Z\in N_{\mathfrak{h}}(C_{\mathfrak{h}}(X+Y))$. Indeed, take any
$U\in C_{\mathfrak{h}}(X+Y)$, then
$$
0=[U,[X,Y]]=\frac{\lambda}{\lambda-\mu}[[U,Z],X]+\frac{\mu}{\lambda-\mu}[[U,Z],Y].
$$
Hence, $[U,Z]\in C_{\mathfrak{h}}(X+Y)$ and $Z\in N_{\mathfrak{h}}(C_{\mathfrak{h}}(X+Y))$. For any $Z_0\in \mathfrak{h}$ such that
$[X,Y]=\frac{\lambda}{\lambda-\mu}[Z_0,X]+\frac{\mu}{\lambda-\mu}[Z_0,Y]$, we have $\frac{\lambda}{\lambda-\mu}[Z-Z_0,X]+\frac{\mu}{\lambda-\mu}[Z-Z_0,Y]=0$ and,
therefore,
$Z-Z_0\in C_{\mathfrak{h}}(X+Y)$.
Hence, the $\widetilde{C}_{\mathfrak{h}}(X+Y)$-component of $Z$ (see (\ref{normcentr})) has the required property.
\end{proof}

\begin{remark} The map $\Xi :\mathfrak{m} \rightarrow \mathfrak{h}$ that sends any $X+Y \in \mathfrak{m}$ to the vector $Z$ from Lemma~\ref{GO-criterion2}
is called the geodesic graph. It is very useful in the study of geodesic orbit metrics, see~\cite{KV}.
\end{remark}

\begin{corollary}\label{cor1}
If the metric with $A_{\lambda,\mu}$,  $\lambda \neq \mu$, is geodesic orbit and $N_{\mathfrak{h}}(C_{\mathfrak{h}}(X+Y))=C_{\mathfrak{h}}(X+Y)$ for some
 $X \in \mathfrak{m}_1$ and $Y \in \mathfrak{m}_2$, then $[X,Y]=0$.
\end{corollary}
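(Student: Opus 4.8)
The plan is to read the statement off directly from Lemma~\ref{GO-criterion2}, so the proof will be short. Fix $X\in\mathfrak{m}_1$ and $Y\in\mathfrak{m}_2$ satisfying $N_{\mathfrak{h}}(C_{\mathfrak{h}}(X+Y))=C_{\mathfrak{h}}(X+Y)$. The first step is to observe what this hypothesis does to the decomposition~(\ref{normcentr}): by definition $\widetilde{C}_{\mathfrak{h}}(X+Y)$ is the $\langle\cdot,\cdot\rangle$-orthogonal complement of $C_{\mathfrak{h}}(X+Y)$ inside $N_{\mathfrak{h}}(C_{\mathfrak{h}}(X+Y))$, so when these two subalgebras coincide we get $\widetilde{C}_{\mathfrak{h}}(X+Y)=\{0\}$.

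The second step is to invoke Lemma~\ref{GO-criterion2}, which applies precisely because $\lambda\neq\mu$: since the metric with operator $A_{\lambda,\mu}$ is geodesic orbit, there exists $Z\in\widetilde{C}_{\mathfrak{h}}(X+Y)$ with $[X,Y]=\frac{\lambda}{\lambda-\mu}[Z,X]+\frac{\mu}{\lambda-\mu}[Z,Y]$. Combining with the first step forces $Z=0$, and substituting back gives $[X,Y]=0$, as claimed. There is no genuine obstacle here; the only point requiring a word of care is that Lemma~\ref{GO-criterion2} is stated only for $\lambda\neq\mu$, which is part of the hypothesis, so the argument is legitimate. (Alternatively, one could bypass Lemma~\ref{GO-criterion2} and argue from Lemma~\ref{GO-criterion1} together with the computation in its proof showing any admissible $Z$ lies in $N_{\mathfrak{h}}(C_{\mathfrak{h}}(X+Y))$ modulo $C_{\mathfrak{h}}(X+Y)$; but using Lemma~\ref{GO-criterion2} is cleanest.)
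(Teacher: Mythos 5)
Your proof is correct and is exactly the paper's argument: the hypothesis $N_{\mathfrak{h}}(C_{\mathfrak{h}}(X+Y))=C_{\mathfrak{h}}(X+Y)$ forces $\widetilde{C}_{\mathfrak{h}}(X+Y)=\{0\}$ via the decomposition~(\ref{normcentr}), so the vector $Z$ supplied by Lemma~\ref{GO-criterion2} must vanish and the formula $[X,Y]=\frac{\lambda}{\lambda-\mu}[Z,X]+\frac{\mu}{\lambda-\mu}[Z,Y]$ gives $[X,Y]=0$. The paper's one-line proof (``The vector $Z$ in Lemma \ref{GO-criterion2} should be trivial'') compresses precisely these steps.
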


\begin{proof} The vector $Z$ in Lemma \ref{GO-criterion2} should be trivial.
\end{proof}

\begin{theorem}\label{two modules}
Let $G/H$ be compact, $\langle \cdot,\cdot \rangle$ is $\Ad(G)$-invariant inner product on $\mathfrak{g}$, $\mathfrak{m}$ is the
$\langle \cdot,\cdot \rangle$-orthogonal complement to $\mathfrak{h}$ in $\mathfrak{g}$. Suppose that
$\mathfrak{m}=\mathfrak{m}_1\oplus \mathfrak{m}_2$, where $\langle \mathfrak{m}_1, \mathfrak{m}_2 \rangle=0$ and both
$\mathfrak{m}_1$ and $\mathfrak{m}_2$ are $\Ad(H)$-invariant.
Then the following conditions are equivalent:
\begin{enumerate}
\item the metric
$(\cdot,\cdot)=\lambda \langle \cdot,\cdot \rangle|_{\mathfrak{m}_1}+\mu \langle \cdot,\cdot \rangle|_{\mathfrak{m}_2}$
is geodesic orbit for some $\lambda\neq \mu$;
\item the metric
$(\cdot,\cdot)=\lambda \langle \cdot,\cdot \rangle|_{\mathfrak{m}_1}+\mu \langle \cdot,\cdot \rangle|_{\mathfrak{m}_2}$
is geodesic orbit for any $\lambda,\mu$;
\item for any $X\in \mathfrak{m}_1$ and $Y\in \mathfrak{m}_2$ there is a unique
$Z_X \in \widetilde{C}_{\mathfrak{h}}(X+Y)\cap C_{\mathfrak{h}}(X)$ and a unique
$Z_Y \in \widetilde{C}_{\mathfrak{h}}(X+Y)\cap C_{\mathfrak{h}}(Y)$
such that
$[X,Y]=[Z_Y,X]+[Z_X,Y]$.
\end{enumerate}
\end{theorem}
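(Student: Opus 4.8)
The plan is to establish the cycle of implications $(1) \Rightarrow (3) \Rightarrow (2) \Rightarrow (1)$, where $(2) \Rightarrow (1)$ is trivial. The main content is in the first two implications, and both will be obtained by carefully decomposing the single vector $Z$ provided by Lemma~\ref{GO-criterion2} into pieces that individually centralize $X$ or $Y$. The key algebraic observation I would use throughout is that $\widetilde{C}_{\mathfrak{h}}(X+Y)$ commutes with $C_{\mathfrak{h}}(X+Y) = C_{\mathfrak{h}}(X)\cap C_{\mathfrak{h}}(Y)$, so that any bracket computation involving these subalgebras is controlled.

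For $(1) \Rightarrow (3)$: assume the metric is geodesic orbit for some fixed $\lambda \neq \mu$. Fix $X \in \mathfrak{m}_1$, $Y \in \mathfrak{m}_2$, and let $Z \in \widetilde{C}_{\mathfrak{h}}(X+Y)$ be the unique vector from Lemma~\ref{GO-criterion2} with $[X,Y] = \frac{\lambda}{\lambda-\mu}[Z,X] + \frac{\mu}{\lambda-\mu}[Z,Y]$. The idea is that the compact Lie algebra $\widetilde{C}_{\mathfrak{h}}(X+Y)$ acts on $X+Y$ by the adjoint action, and I want to split $Z$ according to how it moves $X$ versus $Y$. Concretely, I would restrict attention to the subalgebra generated by $\widetilde{C}_{\mathfrak{h}}(X+Y)$ together with $X$ and $Y$ and analyze the orbit of $X+Y$; since $[Z, X+Y] \in \mathfrak{m}$ (it has no $\mathfrak{h}$-component — this is where the geodesic orbit condition, rephrased as $[Z+X+Y, A(X+Y)] \in \mathfrak{h}$, enters) and the decomposition $\mathfrak{m} = \mathfrak{m}_1 \oplus \mathfrak{m}_2$ is $\Ad(H)$-invariant, one has $[Z,X] \in \mathfrak{m}_1$ and $[Z,Y] \in \mathfrak{m}_2$. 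Now I want vectors $Z_X \in \widetilde{C}_{\mathfrak{h}}(X+Y) \cap C_{\mathfrak{h}}(X)$ and $Z_Y \in \widetilde{C}_{\mathfrak{h}}(X+Y) \cap C_{\mathfrak{h}}(Y)$ with $[X,Y] = [Z_Y, X] + [Z_X, Y]$. The natural candidates come from rescaling: set $Z_Y = \frac{\lambda}{\lambda-\mu} Z - W$ and $Z_X = \frac{\mu}{\lambda-\mu}Z + W$ for a correction term $W \in C_{\mathfrak{h}}(X+Y) \subset C_{\mathfrak{h}}(X) \cap C_{\mathfrak{h}}(Y)$ chosen to kill the unwanted components; since adding any element of $C_{\mathfrak{h}}(X+Y)$ changes neither $[Z_Y,X]$ nor the sum, the freedom in $W$ lets me arrange $[Z_Y,X] = \frac{\lambda}{\lambda-\mu}[Z,X]$ and $[Z_X,Y] = \frac{\mu}{\lambda-\mu}[Z,Y]$ while forcing $[Z_Y,Y] = 0$ and $[Z_X,X] = 0$. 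The crucial point that makes $W$ exist is that $\ad(Z)$ maps $\mathfrak{m}_1 \oplus \mathfrak{m}_2$ into itself and commutes with $\ad(U)$ for $U \in C_{\mathfrak{h}}(X+Y)$, so the relevant linear-algebra system is solvable; uniqueness follows because any two choices of $(Z_X, Z_Y)$ differ by an element of $C_{\mathfrak{h}}(X+Y)$ intersected with $\widetilde{C}_{\mathfrak{h}}(X+Y) = 0$. This splitting argument is the technical heart of the proof.

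For $(3) \Rightarrow (2)$: given the splitting $[X,Y] = [Z_Y,X] + [Z_X,Y]$ with $Z_X \in C_{\mathfrak{h}}(X)$ and $Z_Y \in C_{\mathfrak{h}}(Y)$, I need to produce, for an arbitrary pair $\lambda, \mu$, a vector $Z \in \mathfrak{h}$ with $[X,Y] = \frac{\lambda}{\lambda-\mu}[Z,X] + \frac{\mu}{\lambda-\mu}[Z,Y]$. The natural guess is $Z = \frac{\lambda-\mu}{\lambda} Z_Y + \frac{\lambda-\mu}{\mu} Z_X$: then $\frac{\lambda}{\lambda-\mu}[Z,X] = [Z_Y,X] + \frac{\lambda}{\mu}[Z_X,X] = [Z_Y,X]$ since $Z_X \in C_{\mathfrak{h}}(X)$, and similarly $\frac{\mu}{\lambda-\mu}[Z,Y] = [Z_X,Y] + \frac{\mu}{\lambda}[Z_Y,Y] = [Z_X,Y]$, so the two terms add to $[X,Y]$ exactly. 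This handles every $\lambda \neq \mu$; the case $\lambda = \mu$ gives a normal metric and is automatically geodesic orbit. Finally $(2) \Rightarrow (1)$ is immediate (specialize to any $\lambda \neq \mu$, which exists).

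**The main obstacle** I anticipate is the existence and uniqueness of the correction term $W$ in the $(1) \Rightarrow (3)$ step — that is, verifying that one can genuinely separate the single geodesic-graph vector $Z$ into an $X$-centralizing part and a $Y$-centralizing part. The subtlety is that $Z$ itself need centralize neither $X$ nor $Y$, and the decomposition relies on the interplay between $\ad(Z)|_{\mathfrak m_1}$, $\ad(Z)|_{\mathfrak m_2}$, and the centralizer $C_{\mathfrak{h}}(X+Y)$; one must check that $[Z,X]$ lies in the image of $\ad(\,\cdot\,)X$ restricted to an appropriate complement, which uses that $\widetilde C_{\mathfrak h}(X+Y)$ is reductive and commutes with $C_{\mathfrak h}(X+Y)$. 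I would handle this by working inside the compact subalgebra $\mathfrak{h}$ and using averaging/complete reducibility to split off exactly the piece of $\ad(Z)$ that acts trivially on $X$, respectively on $Y$.
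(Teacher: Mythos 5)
Your direction of implications $(1)\Rightarrow(3)\Rightarrow(2)\Rightarrow(1)$ matches the paper, and your proof of $(3)\Rightarrow(2)$ is correct and even a touch cleaner than the paper's: you take $Z=\tfrac{\lambda-\mu}{\lambda}Z_Y+\tfrac{\lambda-\mu}{\mu}Z_X$ directly from (3) applied to $X,Y$, while the paper applies (3) to the rescaled pair $\lambda X,\mu Y$ and then takes $Z=\tfrac{\lambda-\mu}{\lambda\mu}(Z_X+Z_Y)$; both reduce to Lemma~\ref{GO-criterion1}.

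The problem is your $(1)\Rightarrow(3)$. You propose $Z_Y=\tfrac{\lambda}{\lambda-\mu}Z-W$ and $Z_X=\tfrac{\mu}{\lambda-\mu}Z+W$ with a correction $W\in C_{\mathfrak h}(X+Y)$. But $C_{\mathfrak h}(X+Y)=C_{\mathfrak h}(X)\cap C_{\mathfrak h}(Y)$, so $[W,X]=[W,Y]=0$: the correction term cannot change any bracket, and therefore cannot ``kill'' the unwanted parts $[Z,Y]$ (needed for $Z_Y\in C_{\mathfrak h}(Y)$) or $[Z,X]$ (needed for $Z_X\in C_{\mathfrak h}(X)$). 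Worse, adding a nonzero $W\in C_{\mathfrak h}(X+Y)$ would take $Z_X$ and $Z_Y$ out of $\widetilde C_{\mathfrak h}(X+Y)$, since $\widetilde C_{\mathfrak h}(X+Y)\cap C_{\mathfrak h}(X+Y)=0$; so in fact $W$ is forced to vanish, and you are left requiring $[Z,X]=[Z,Y]=0$, which is false in general. Your closing ``main obstacle'' paragraph correctly senses this tension but the proposed ``averaging/complete reducibility'' fix does not supply a decomposition of $Z$ itself, only of the operator $\ad(Z)$.

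The missing idea is a reflection trick: apply Lemma~\ref{GO-criterion2} twice, to the pair $(X,Y)$ and to the pair $(X,-Y)$, obtaining $Z$ and $Z'$ in $\widetilde C_{\mathfrak h}(X+Y)$ (note $C_{\mathfrak h}(X-Y)=C_{\mathfrak h}(X+Y)$). Adding the two GO identities gives $\lambda[Z+Z',X]+\mu[Z-Z',Y]=0$; since the two terms live in the complementary spaces $\mathfrak m_1$ and $\mathfrak m_2$, each vanishes, so $Z+Z'\in C_{\mathfrak h}(X)$ and $Z-Z'\in C_{\mathfrak h}(Y)$. Subtracting the two identities then yields $[X,Y]=[Z_Y,X]+[Z_X,Y]$ with $Z_X=\tfrac{\mu}{2(\lambda-\mu)}(Z+Z')$ and $Z_Y=\tfrac{\lambda}{2(\lambda-\mu)}(Z-Z')$, and uniqueness follows exactly as you indicated from $\widetilde C_{\mathfrak h}(X+Y)\cap C_{\mathfrak h}(X+Y)=0$. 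This is the step your proposal is missing.
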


\begin{proof}
The implication $(2)\Longrightarrow (1)$ is obvious. Let us prove $(3)\Longrightarrow (2)$. Clear that for  $\lambda = \mu$ we have
a geodesic orbit metric. Now, fix some positive $\lambda \neq \mu$.
According to Lemma~\ref{GO-criterion1}, it suffices to find $Z \in \mathfrak{h}$ such that
$[X,Y]=\frac{\lambda}{\lambda-\mu}[Z,X]+\frac{\mu}{\lambda-\mu}[Z,Y]$. In fact,  for $\lambda X$ and $\mu Y$, there is a unique
$Z_X \in \widetilde{C}_{\mathfrak{h}}(X+Y)\cap C_{\mathfrak{h}}(X)$ and a unique
$Z_Y \in \widetilde{C}_{\mathfrak{h}}(X+Y)\cap C_{\mathfrak{h}}(Y)$
such that
$[\lambda X,\mu Y]=[Z_Y,\lambda X]+[Z_X,\mu Y]=[Z_X+Z_Y,\lambda X]+[Z_X+Z_Y, \mu Y]$ by~(3). Now, it suffices to take
$Z:=\frac{\lambda-\mu}{\lambda \cdot \mu}(Z_X+Z_Y)$ and apply Lemma \ref{GO-criterion1}.
\smallskip

Finally, we prove the implication $(1)\Longrightarrow (3)$. We will use the idea from the proof of Theorem 3 in \cite{Nik2016}.
By Lemma \ref{GO-criterion2}, for any $X \in \mathfrak{m}_1$ and any $Y \in \mathfrak{m}_2$ there is a unique
$Z \in \widetilde{C}_{\mathfrak{h}}(X+Y)$ such that
$[X,Y]=\frac{\lambda}{\lambda-\mu}[Z,X]+\frac{\mu}{\lambda-\mu}[Z,Y]$. By the same reason, for $X \in \mathfrak{m}_1$ and $-Y \in \mathfrak{m}_2$
there is a unique
$Z' \in \widetilde{C}_{\mathfrak{h}}(X+Y)$ such that
$[X,-Y]=\frac{\lambda}{\lambda-\mu}[Z',X]+\frac{\mu}{\lambda-\mu}[Z',-Y]$. From these two equalities we easily get
$\lambda[Z+Z',X]+\mu[Z-Z',Y]=0$, i.~e. $Z+Z' \in {C}_{\mathfrak{h}}(X)$ and $Z-Z' \in {C}_{\mathfrak{h}}(Y)$. On the other hand,
$Z+Z', Z-Z' \in \widetilde{C}_{\mathfrak{h}}(X+Y)$, since $Z$ and $Z'$ have this property.
Then $Z_X:=\frac{\mu}{2(\lambda-\mu)}(Z+Z')\in {C}_{\mathfrak{h}}(X) \cap \widetilde{C}_{\mathfrak{h}}(X+Y)$ and
$Z_Y:=\frac{\lambda}{2(\lambda-\mu)}(Z-Z')\in {C}_{\mathfrak{h}}(Y) \cap \widetilde{C}_{\mathfrak{h}}(X+Y)$. It is easy to check that
$Z=\frac{\lambda-\mu}{\mu}Z_X+\frac{\lambda-\mu}{\lambda}Z_Y$ and $[X,Y]=[Z_Y,X]+[Z_X,Y]$. The following is to prove the uniqueness of $Z_X$ and $Z_Y$.
If $[X,Y]=[Z'_Y,X]+[Z'_X,Y]$ for some
$Z'_X\in {C}_{\mathfrak{h}}(X) \cap \widetilde{C}_{\mathfrak{h}}(X+Y)$ and
$Z'_Y\in {C}_{\mathfrak{h}}(Y) \cap \widetilde{C}_{\mathfrak{h}}(X+Y)$, then $[Z_Y-Z'_Y,X]+[Z_X-Z'_X,Y]=0$, hence $Z_Y-Z'_Y\in {C}_{\mathfrak{h}}(X)$ and
$Z_X-Z'_X \in {C}_{\mathfrak{h}}(Y)$. On the other hand, $Z_X-Z'_X\in {C}_{\mathfrak{h}}(X) \cap \widetilde{C}_{\mathfrak{h}}(X+Y)$ and
$Z_Y-Z'_Y\in {C}_{\mathfrak{h}}(Y) \cap \widetilde{C}_{\mathfrak{h}}(X+Y)$, hence,
$Z_X-Z'_X,Z_Y-Z'_Y\in {C}_{\mathfrak{h}}(X+Y)\cap\widetilde{C}_{\mathfrak{h}}(X+Y)=\{0\}$, i.~e. the theorem holds.
\end{proof}

\begin{corollary}\label{cor2}
Suppose that the metric with $A_{\lambda,\mu}$, $\lambda \neq \mu$ is geodesic orbit. If $C_{\mathfrak{h}}(X)\subset C_{\mathfrak{h}}(Y)$ for some $X \in \mathfrak{m}_i$ and $Y \in \mathfrak{m}_j$ ($i\neq j$), then $[X,Y] \in \mathfrak{m}_i$. In particular, if $C_{\mathfrak{h}}(X)=0$, then
$[X,\mathfrak{m}_j] \subset \mathfrak{m}_i$.
\end{corollary}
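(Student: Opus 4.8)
The plan is to obtain the conclusion directly from the third, ``geodesic-graph'', characterization in Theorem~\ref{two modules}. Since by hypothesis the metric with $A_{\lambda,\mu}$, $\lambda\neq\mu$, is geodesic orbit, condition (3) of that theorem is available. The whole argument then uses only two facts beyond (3): that $\mathfrak{m}_1$ and $\mathfrak{m}_2$ are $\ad(\mathfrak{h})$-invariant, and that a vector of $\mathfrak{h}$ which centralizes $X$ automatically centralizes $Y$ once $C_{\mathfrak{h}}(X)\subset C_{\mathfrak{h}}(Y)$.

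First I would reduce to $i=1$, $j=2$. The statement is symmetric under interchanging the labels $1$ and $2$, and swapping $X$ and $Y$ only changes $[X,Y]$ by a sign, so without loss of generality $X\in\mathfrak{m}_1$ and $Y\in\mathfrak{m}_2$. Then Theorem~\ref{two modules}(3) yields (unique) vectors $Z_X\in\widetilde{C}_{\mathfrak{h}}(X+Y)\cap C_{\mathfrak{h}}(X)$ and $Z_Y\in\widetilde{C}_{\mathfrak{h}}(X+Y)\cap C_{\mathfrak{h}}(Y)$ with $[X,Y]=[Z_Y,X]+[Z_X,Y]$. Now $Z_X\in C_{\mathfrak{h}}(X)\subset C_{\mathfrak{h}}(Y)$, so $[Z_X,Y]=0$ and the identity reduces to $[X,Y]=[Z_Y,X]$. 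Since $Z_Y\in\mathfrak{h}$ and $\mathfrak{m}_1$ is $\Ad(H)$-invariant, hence $\ad(\mathfrak{h})$-invariant, this gives $[X,Y]=[Z_Y,X]\in[\mathfrak{h},\mathfrak{m}_1]\subset\mathfrak{m}_1=\mathfrak{m}_i$, as required. For the last assertion, if $C_{\mathfrak{h}}(X)=0$ then the inclusion $C_{\mathfrak{h}}(X)\subset C_{\mathfrak{h}}(Y)$ holds trivially for every $Y\in\mathfrak{m}_j$, whence $[X,\mathfrak{m}_j]\subset\mathfrak{m}_i$.

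There is no real obstacle in this argument; the only delicate point is the bookkeeping in the symmetry reduction. When $X\in\mathfrak{m}_2$ and $Y\in\mathfrak{m}_1$ one must feed the ordered pair $(Y,X)$ into Theorem~\ref{two modules}(3) and keep track of which of the two auxiliary vectors lies in $C_{\mathfrak{h}}(X)$ and which in $C_{\mathfrak{h}}(Y)$, so as to be sure that it is the summand of the form $[\,\cdot\,,Y]$ --- and not $[\,\cdot\,,X]$ --- that is annihilated by the hypothesis; after that the $\ad(\mathfrak{h})$-invariance of $\mathfrak{m}_2$ finishes the argument exactly as above.
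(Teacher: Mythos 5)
Your proof is correct and follows essentially the same route as the paper: both invoke condition (3) of Theorem~\ref{two modules} and observe that the hypothesis $C_{\mathfrak{h}}(X)\subset C_{\mathfrak{h}}(Y)$ kills the summand $[Z_X,Y]$, leaving $[X,Y]=[Z_Y,X]\in\mathfrak{m}_i$ by $\ad(\mathfrak{h})$-invariance. The only cosmetic difference is that the paper notes $C_{\mathfrak{h}}(X)=C_{\mathfrak{h}}(X+Y)$ and concludes $Z_X=0$ outright, while you conclude only $[Z_X,Y]=0$, which is all that is needed.
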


\begin{proof} Without loss of generality we assume that $i=1$ and $j=2$.
Note that $C_{\mathfrak{h}}(X)\subset C_{\mathfrak{h}}(Y)$ is equivalent to $C_{\mathfrak{h}}(X)= C_{\mathfrak{h}}(X+Y)$.
By (3) in Theorem \ref{two modules}, we have that $Z_X=0$ and $[X,Y] \in \mathfrak{m}_1$.
\end{proof}
\smallskip

\begin{corollary}\label{cor3}
Suppose that the metric with $A_{\lambda,\mu}$, $\lambda \neq \mu$, is geodesic orbit. If there exists $X \in \mathfrak{m}_1$ and $Y \in \mathfrak{m}_2$ such that $[X,Y] \not\in  \mathfrak{m}_1$ and $[X,Y] \not\in  \mathfrak{m}_2$, then
$C_{\mathfrak{h}}(X)\neq 0$, $C_{\mathfrak{h}}(Y)\neq 0$, and $\dim \Bigl(\widetilde{C}_{\mathfrak{h}}(X+Y)\Bigr) \geq 2$.
\end{corollary}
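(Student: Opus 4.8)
The plan is to apply directly part~(3) of Theorem~\ref{two modules}, which is available precisely because the metric with $A_{\lambda,\mu}$, $\lambda\neq\mu$, is assumed geodesic orbit. For the given $X\in\mathfrak{m}_1$ and $Y\in\mathfrak{m}_2$ it furnishes unique vectors $Z_X\in\widetilde{C}_{\mathfrak{h}}(X+Y)\cap C_{\mathfrak{h}}(X)$ and $Z_Y\in\widetilde{C}_{\mathfrak{h}}(X+Y)\cap C_{\mathfrak{h}}(Y)$ with $[X,Y]=[Z_Y,X]+[Z_X,Y]$. Since $\mathfrak{m}_1$ and $\mathfrak{m}_2$ are $\Ad(H)$-invariant, hence $\ad(\mathfrak{h})$-invariant, one has $[Z_Y,X]\in\mathfrak{m}_1$ and $[Z_X,Y]\in\mathfrak{m}_2$; thus this equality is nothing but the decomposition of $[X,Y]$ along $\mathfrak{m}=\mathfrak{m}_1\oplus\mathfrak{m}_2$, with $\mathfrak{m}_1$-component $[Z_Y,X]$ and $\mathfrak{m}_2$-component $[Z_X,Y]$.

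The first two conclusions are then read off. The hypothesis $[X,Y]\notin\mathfrak{m}_1$ means the $\mathfrak{m}_2$-component is nonzero, i.e. $[Z_X,Y]\neq 0$, which forces $Z_X\neq 0$; since $Z_X\in C_{\mathfrak{h}}(X)$ this gives $C_{\mathfrak{h}}(X)\neq 0$. Symmetrically, $[X,Y]\notin\mathfrak{m}_2$ gives $[Z_Y,X]\neq 0$, hence $Z_Y\neq 0$ and $C_{\mathfrak{h}}(Y)\neq 0$.

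For the dimension bound I would show that $Z_X$ and $Z_Y$ are linearly independent; as both lie in $\widetilde{C}_{\mathfrak{h}}(X+Y)$, this immediately yields $\dim\bigl(\widetilde{C}_{\mathfrak{h}}(X+Y)\bigr)\geq 2$. Both are nonzero by the previous step, so a linear dependence would mean $Z_Y=c\,Z_X$ with $c\neq 0$; then $Z_Y\in C_{\mathfrak{h}}(X)$ as well, so $Z_Y\in C_{\mathfrak{h}}(X)\cap C_{\mathfrak{h}}(Y)=C_{\mathfrak{h}}(X+Y)$, while also $Z_Y\in\widetilde{C}_{\mathfrak{h}}(X+Y)$. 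But $C_{\mathfrak{h}}(X+Y)\cap\widetilde{C}_{\mathfrak{h}}(X+Y)=\{0\}$ by~(\ref{normcentr}), so $Z_Y=0$, a contradiction.

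I do not expect a genuine obstacle here: the statement is just a repackaging of Theorem~\ref{two modules}(3) together with the $\ad(\mathfrak{h})$-invariance of $\mathfrak{m}_1,\mathfrak{m}_2$ and the direct-sum relation~(\ref{normcentr}). The only thing to keep straight is the bookkeeping of which summand ($\mathfrak{m}_1$ or $\mathfrak{m}_2$) each bracket term lands in, and, for the last claim, the observation that any linear dependence of $Z_X$ and $Z_Y$ would push a nonzero element of $\widetilde{C}_{\mathfrak{h}}(X+Y)$ into the trivial intersection $C_{\mathfrak{h}}(X+Y)\cap\widetilde{C}_{\mathfrak{h}}(X+Y)$.
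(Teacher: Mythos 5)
Your proof is correct and follows exactly the approach the paper intends: invoke Theorem~\ref{two modules}(3), observe that $[Z_Y,X]\in\mathfrak{m}_1$ and $[Z_X,Y]\in\mathfrak{m}_2$ so the hypothesis forces both $Z_X$ and $Z_Y$ to be nonzero, and then note linear dependence would place a nonzero vector in $C_{\mathfrak{h}}(X+Y)\cap\widetilde{C}_{\mathfrak{h}}(X+Y)=\{0\}$. The paper's proof is the one-line remark that $Z_X$ and $Z_Y$ are non-trivial and linearly independent; you have simply supplied the details it leaves implicit.
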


\begin{proof} It suffices to note that
$Z_X$ and $Z_Y$ (see (3)) in Theorem \ref{two modules}) are non-trivial and linear independent.
\end{proof}

\begin{remark}\label{remdifmetgo}
According to Theorem \ref{two modules}, we call a compact homogeneous space $G/H$ with simple group $G$, connected $H$,  and two
irreducible components in the isotropy representation, {\it geodesic orbit} if it is geodesic orbit
with respect to any (hence, all) $G$-invariant Riemannian metric, that is not normal with respect to $G$ (i.~e. $\lambda \neq \mu$).
\end{remark}

Theorem \ref{two modules} allows us to apply special tools to study geodesic orbit Riemannian manifolds (compare with section 5 in \cite{Nik2016}).
These tools related to the problem of classifying principal orbit types for linear actions of
compact Lie groups. If a compact linear Lie group $K$ acts on some finite-dimensional vector space $V$
(in other terms, we have a representation of a Lie group on the space~$V$),
then almost all points of $V$ are situated on the orbits of $K$, that are pairwise isomorphic as $K$-manifolds.
Such orbits are called orbits in general position. The isotropy groups  of all points on such orbits are conjugate
in $K$, and the class of these isotropy groups is called a {\it principal isotropy group} for the linear group $K$
and the corresponding Lie algebra is called a {\it principal isotropy algebra} or a {\it stationary subalgebra of points in general position}.
Roughly speaking, the principal isotropy algebra is trivial for general linear Lie groups $K$, but it is not the case for some special linear groups.
\smallskip

The classification of linear actions of simple compact connected Lie groups with
non-trivial connected principal isotropy subgroups, has already
been carried out in \cite{Ela1}, \cite{Hsiang}, and  \cite{Kram}.
See details in \S 3 of Chapter 5 in \cite{Hsiangbook}.
In \S 4 and \S 5 of Chapter 5 in \cite{Hsiangbook}, one can find
a description of more general compact connected Lie groups with
non-trivial connected principal isotropy subgroups (see e.~g. Theorem (V.7') in \cite{Hsiangbook}).
\smallskip

Let $\chi: H \to O(\mathfrak{m})$ be the isotropy representation of a simply connected compact homogeneous space $G/H$ with simple $G$.
We suppose that $\chi =\chi_1\oplus \chi_2$, where $\chi_1$ and $\chi_2$ are irreducible subrepresentations on $\mathfrak{m}_i$ with
$\langle \mathfrak{m}_1, \mathfrak{m}_2 \rangle=0$.
If $G/H$ with the metric operator $A_{\lambda,\mu}$, $\lambda\neq \mu$, is geodesic orbit, then we have {\it non-trivial information on
the orbit types} of representations $\chi_1$ and $\chi_2$.
\smallskip

\begin{lemma}\label{lemmasimple}
In the above assumptions, we have $[\mathfrak{m}_1,\mathfrak{m}_2]\neq 0$.
\end{lemma}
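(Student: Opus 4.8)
The plan is to argue by contradiction: suppose $[\mathfrak{m}_1,\mathfrak{m}_2]=0$. First I would examine the structure this forces on $\mathfrak{g}$. Set $\mathfrak{k}:=\mathfrak{h}\oplus\mathfrak{m}_1$ and $\mathfrak{l}:=\mathfrak{h}\oplus\mathfrak{m}_2$. Using the general bracket relations for a reductive decomposition together with $[\mathfrak{m}_1,\mathfrak{m}_2]=0$, I would check that both $\mathfrak{k}$ and $\mathfrak{l}$ are subalgebras of $\mathfrak{g}$: indeed $[\mathfrak{m}_1,\mathfrak{m}_1]\subset\mathfrak{h}\oplus\mathfrak{m}_1$ because the $\mathfrak{m}_2$-component of $[\mathfrak{m}_1,\mathfrak{m}_1]$ would, by $\Ad(H)$-invariance and irreducibility, have to vanish or else produce extra isotropy summands — more directly, $[\mathfrak{m}_2,[\mathfrak{m}_1,\mathfrak{m}_1]]\subset[[\mathfrak{m}_2,\mathfrak{m}_1],\mathfrak{m}_1]+[\mathfrak{m}_1,[\mathfrak{m}_2,\mathfrak{m}_1]]=0$, so the $\mathfrak{m}_2$-part of $[\mathfrak{m}_1,\mathfrak{m}_1]$ centralizes $\mathfrak{m}_2$; since $\chi_2$ is a nontrivial irreducible representation of $H$ and that $\mathfrak{m}_2$-part is $\Ad(H)$-invariant, it must be zero. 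Symmetrically for $[\mathfrak{m}_2,\mathfrak{m}_2]$.

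Next I would observe that $\mathfrak{k}=\mathfrak{h}\oplus\mathfrak{m}_1$ and $\mathfrak{l}=\mathfrak{h}\oplus\mathfrak{m}_2$ are ideals in $\mathfrak{k}+\mathfrak{l}=\mathfrak{g}$ modulo their intersection $\mathfrak{h}$, and in fact $[\mathfrak{k},\mathfrak{l}]\subset\mathfrak{h}$. From $[\mathfrak{m}_1,\mathfrak{m}_2]=0$ and the subalgebra property one gets that the subspace $\mathfrak{m}_1$ together with the part of $\mathfrak{h}$ that acts nontrivially on it generates a proper ideal-like piece; the upshot is that $\mathfrak{g}$ decomposes as a sum of two commuting pieces (up to the common $\mathfrak{h}$), which contradicts the simplicity of $\mathfrak{g}$. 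More carefully: let $\mathfrak{h}_1$ be the $\langle\cdot,\cdot\rangle$-orthogonal complement in $\mathfrak{h}$ of the ineffective kernel on $\mathfrak{m}_1$, etc.; one shows $\mathfrak{k}$ is an ideal of $\mathfrak{g}$ unless $\mathfrak{k}=\mathfrak{h}$ or $\mathfrak{k}=\mathfrak{g}$, and the intermediate possibilities are excluded because $\mathfrak{m}_1\neq 0$ and $\mathfrak{m}_2\neq 0$. Since $\mathfrak{g}$ is simple, no proper nonzero ideal exists, giving the contradiction.

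The cleanest route, and the one I would actually write, uses Lemma~\ref{GO-criterion} or Corollary~\ref{cor2} to say nothing — rather it is purely structural and does not use the GO hypothesis at all, only simplicity of $\mathfrak{g}$ and the fact that the isotropy representation has exactly the two irreducible pieces $\mathfrak{m}_1,\mathfrak{m}_2$. Indeed if $[\mathfrak{m}_1,\mathfrak{m}_2]=0$ then $\mathfrak{h}\oplus\mathfrak{m}_1$ is a subalgebra and also an ideal of $\mathfrak{g}$: for $\mathfrak{g}$-invariance one checks $[\mathfrak{m}_2,\mathfrak{h}\oplus\mathfrak{m}_1]\subset[\mathfrak{m}_2,\mathfrak{h}]+[\mathfrak{m}_2,\mathfrak{m}_1]\subset\mathfrak{m}_2+0$, so actually $[\mathfrak{m}_2,\mathfrak{h}\oplus\mathfrak{m}_1]\subset\mathfrak{m}_2$, which is not contained in $\mathfrak{h}\oplus\mathfrak{m}_1$ — so I must instead consider the ideal generated by $\mathfrak{m}_1$. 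That ideal is contained in $\mathfrak{h}\oplus\mathfrak{m}_1$ (since $[\mathfrak{m}_1,\mathfrak{m}_1]\subset\mathfrak{h}\oplus\mathfrak{m}_1$, $[\mathfrak{h},\mathfrak{m}_1]\subset\mathfrak{m}_1$, $[\mathfrak{m}_2,\mathfrak{m}_1]=0$, and $[\mathfrak{m}_2,\mathfrak{h}_{(1)}]$ lies in $\mathfrak{m}_2$ but the relevant $\mathfrak{h}_{(1)}$ — the part acting on $\mathfrak{m}_1$ — is absorbed correctly), and it is nonzero and proper because $\mathfrak{m}_2\neq 0$ is not in it; contradiction with $\mathfrak{g}$ simple. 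The main obstacle is the bookkeeping with the center/ineffective part of $\mathfrak{h}$ and making the "ideal generated by $\mathfrak{m}_1$" argument airtight, i.e. verifying that this ideal really stays inside $\mathfrak{h}\oplus\mathfrak{m}_1$ and is proper; once that is pinned down, simplicity of $\mathfrak{g}$ finishes it immediately.
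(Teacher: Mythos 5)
Your instinct is the same as the paper's: suppose $[\mathfrak{m}_1,\mathfrak{m}_2]=0$, show that the ideal generated by $\mathfrak{m}_1$ is proper and nonzero, and contradict simplicity of $\mathfrak{g}$. The paper records exactly this (citing Lemma~7 of \cite{Nik2015}): $\mathfrak{m}_1+[\mathfrak{m}_1,\mathfrak{m}_1]$ and $\mathfrak{m}_2+[\mathfrak{m}_2,\mathfrak{m}_2]$ are ideals, impossible in simple $\mathfrak{g}$.

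There are two places where your write-up has real gaps. First, your argument that the $\mathfrak{m}_2$-component of $[\mathfrak{m}_1,\mathfrak{m}_1]$ vanishes is fragile: from $[\mathfrak{m}_2,[\mathfrak{m}_1,\mathfrak{m}_1]]=0$ you cannot directly split off the $\mathfrak{m}_2$-component and say that \emph{it} centralizes $\mathfrak{m}_2$ (the various components of a single vector $v\in[\mathfrak{m}_1,\mathfrak{m}_1]$ can mix when bracketed with $\mathfrak{m}_2$), and your appeal to ``$\chi_2$ nontrivial'' is an unproven extra hypothesis. The paper instead uses the hypothesis $\langle\mathfrak{m}_1,\mathfrak{m}_2\rangle=0$ together with $\Ad(G)$-invariance of $\langle\cdot,\cdot\rangle$: $\langle[\mathfrak{m}_1,\mathfrak{m}_1],\mathfrak{m}_2\rangle=-\langle\mathfrak{m}_1,[\mathfrak{m}_1,\mathfrak{m}_2]\rangle=0$, so $[\mathfrak{m}_1,\mathfrak{m}_1]\subset\mathfrak{h}\oplus\mathfrak{m}_1$ immediately, with no case analysis on $\chi_2$.

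Second, you explicitly flag but do not close the step you call ``the main obstacle'': why the ideal generated by $\mathfrak{m}_1$ does not leak into $\mathfrak{m}_2$ via the $\mathfrak{h}$-part of $[\mathfrak{m}_1,\mathfrak{m}_1]$. The resolution is to name the ideal outright as $\mathfrak{i}:=\mathfrak{m}_1+[\mathfrak{m}_1,\mathfrak{m}_1]$ (not $\mathfrak{h}\oplus\mathfrak{m}_1$) and verify $[\mathfrak{g},\mathfrak{i}]\subset\mathfrak{i}$ directly: $[\mathfrak{h},\mathfrak{m}_1]\subset\mathfrak{m}_1$; $[\mathfrak{m}_1,\mathfrak{m}_1]\subset\mathfrak{i}$; $[\mathfrak{m}_2,\mathfrak{m}_1]=0$; $[\mathfrak{h},[\mathfrak{m}_1,\mathfrak{m}_1]]\subset[\mathfrak{m}_1,\mathfrak{m}_1]$ by Jacobi; $[\mathfrak{m}_2,[\mathfrak{m}_1,\mathfrak{m}_1]]=0$ by Jacobi; and $[\mathfrak{m}_1,[\mathfrak{m}_1,\mathfrak{m}_1]]\subset[\mathfrak{m}_1,\mathfrak{h}\oplus\mathfrak{m}_1]\subset\mathfrak{i}$ using the containment established above. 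Then $\mathfrak{i}$ is a nonzero ideal inside $\mathfrak{h}\oplus\mathfrak{m}_1\subsetneq\mathfrak{g}$, and simplicity gives the contradiction. No iterated-bracket bookkeeping is needed. The paragraphs about $[\mathfrak{k},\mathfrak{l}]\subset\mathfrak{h}$ and about $\mathfrak{h}\oplus\mathfrak{m}_1$ being an ideal are false as stated (you partly noticed this yourself) and should simply be cut.
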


\begin{proof}
Otherwise, $[\mathfrak{m}_1,\mathfrak{m}_2]=0$. Together with $\langle \mathfrak{m}_1,\mathfrak{m}_2 \rangle =0$, we have that $\mathfrak{m}_1+[\mathfrak{m}_1,\mathfrak{m}_1]$ and
$\mathfrak{m}_2+[\mathfrak{m}_2,\mathfrak{m}_2]$ are ideals (see details in Lemma 7 of \cite{Nik2015})
in the simple Lie algebra $\mathfrak{g}$, that is impossible.
Hence, $[\mathfrak{m}_1,\mathfrak{m}_2]\neq 0$.
\end{proof}

\begin{prop}\label{piar1}
If $[\mathfrak{m}_1,\mathfrak{m}_2]\not\subset \mathfrak{m}_1$ and  $[\mathfrak{m}_1,\mathfrak{m}_2]\not\subset \mathfrak{m}_2$, then
the principal isotropy group of the representations $\chi_1$ and $\chi_2$
has dimensions $\geq 1$, in particular, the principal isotropy algebras of these representations are not trivial.
\end{prop}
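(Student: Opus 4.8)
The plan is to derive a nontrivial lower bound on the dimension of the principal isotropy algebra of each of $\chi_1$ and $\chi_2$ by exhibiting, for a vector in general position, a stabilizer in $\mathfrak{h}$ that is forced to be positive-dimensional. The key input is Corollary~\ref{cor3}: since the hypothesis $[\mathfrak{m}_1,\mathfrak{m}_2]\not\subset\mathfrak{m}_1$ and $[\mathfrak{m}_1,\mathfrak{m}_2]\not\subset\mathfrak{m}_2$ means there exist $X\in\mathfrak{m}_1$, $Y\in\mathfrak{m}_2$ with $[X,Y]$ lying in neither summand, we immediately get from that corollary that $C_{\mathfrak{h}}(X)\neq 0$ and $C_{\mathfrak{h}}(Y)\neq 0$ for such a pair. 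The task is then to upgrade ``there is a pair $(X,Y)$ with $C_{\mathfrak{h}}(X)\neq 0$'' to ``the generic $X\in\mathfrak{m}_1$ has $C_{\mathfrak{h}}(X)\neq 0$,'' i.e.\ that the principal isotropy algebra of $\chi_1$ is nontrivial (and symmetrically for $\chi_2$).

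First I would observe that the set of $X\in\mathfrak{m}_1$ with $C_{\mathfrak{h}}(X)\neq 0$ is determined by the rank of the linear map $\mathfrak{h}\to\mathfrak{m}_1$, $W\mapsto [W,X]$ (whose kernel is $C_{\mathfrak{h}}(X)$), so it is a Zariski-closed condition; dually, the set of $X$ with $C_{\mathfrak{h}}(X)=0$ is Zariski-open in $\mathfrak{m}_1$. Hence either the principal isotropy algebra of $\chi_1$ is nontrivial, or $C_{\mathfrak{h}}(X)=0$ for $X$ in a dense open subset $\mathcal{U}_1\subset\mathfrak{m}_1$. Assume for contradiction the latter, and likewise let $\mathcal{U}_2\subset\mathfrak{m}_2$ be the dense open set where $C_{\mathfrak{h}}(Y)=0$ (if $\chi_2$'s principal isotropy algebra were also trivial). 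The point is that Corollary~\ref{cor2} applies: if $C_{\mathfrak{h}}(X)=0$ then $[X,\mathfrak{m}_2]\subset\mathfrak{m}_1$, and if $C_{\mathfrak{h}}(Y)=0$ then $[Y,\mathfrak{m}_1]\subset\mathfrak{m}_2$. Taking $X\in\mathcal{U}_1$ and $Y\in\mathcal{U}_2$ we would get $[X,Y]\in\mathfrak{m}_1\cap\mathfrak{m}_2=0$ for all such $X,Y$; since $\mathcal{U}_1\times\mathcal{U}_2$ is dense in $\mathfrak{m}_1\times\mathfrak{m}_2$ and the bracket is continuous, $[\mathfrak{m}_1,\mathfrak{m}_2]=0$, contradicting Lemma~\ref{lemmasimple}. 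So at least one of the two principal isotropy algebras is nontrivial.

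To get the conclusion for \emph{both} representations, I would argue more carefully that the trivial-principal-isotropy case is impossible for \emph{either} one under the standing hypothesis. Suppose the principal isotropy algebra of $\chi_1$ is trivial, i.e.\ $C_{\mathfrak{h}}(X)=0$ for generic $X\in\mathfrak{m}_1$; by Corollary~\ref{cor2} this gives $[X,\mathfrak{m}_2]\subset\mathfrak{m}_1$ for generic $X$, hence by density $[\mathfrak{m}_1,\mathfrak{m}_2]\subset\mathfrak{m}_1$, directly contradicting the hypothesis $[\mathfrak{m}_1,\mathfrak{m}_2]\not\subset\mathfrak{m}_1$. Symmetrically, triviality of the principal isotropy algebra of $\chi_2$ forces $[\mathfrak{m}_1,\mathfrak{m}_2]\subset\mathfrak{m}_2$, again a contradiction. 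Therefore both principal isotropy algebras are nontrivial, and since the principal isotropy algebra has the same dimension as the principal isotropy group, both groups have dimension $\geq 1$.

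The main obstacle is making the ``generic $X$'' step rigorous: one needs that $\{X : C_{\mathfrak{h}}(X)=0\}$, when nonempty, is dense (Zariski-open and nonempty in the real vector space $\mathfrak{m}_1$, viewed through the polynomial condition on the rank of $W\mapsto[W,X]$), so that one may pass from a statement about a principal (generic) vector to a statement about \emph{all} vectors by continuity/density of the bracket. This is exactly where the language of principal orbit types enters — the principal isotropy algebra being trivial is precisely the assertion that $C_{\mathfrak{h}}(X)=0$ on a dense open set — and once that dictionary is in place the argument is the short chain of Corollary~\ref{cor2} plus Lemma~\ref{lemmasimple} above.
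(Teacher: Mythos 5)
Your proof is correct, but it takes a genuinely different route from the paper. The paper argues directly: it introduces the proper subspaces $\mathfrak{m}_{2,1}=\{X\in\mathfrak{m}_2 : [\mathfrak{m}_1,X]\subset\mathfrak{m}_1\}$ and $\mathfrak{m}_{2,2}=\{X\in\mathfrak{m}_2 : [\mathfrak{m}_1,X]\subset\mathfrak{m}_2\}$, picks any $X$ outside $\mathfrak{m}_{2,1}\cup\mathfrak{m}_{2,2}$, builds the analogous proper subspaces $\mathfrak{m}_{1,1}(X),\mathfrak{m}_{1,2}(X)\subset\mathfrak{m}_1$, and for any $Y$ outside their union concludes via Theorem~\ref{two modules}(3) that both $Z_X$ and $Z_Y$ are nonzero, hence $C_{\mathfrak{h}}(X)\neq 0$ and $C_{\mathfrak{h}}(Y)\neq 0$. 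Since the $X$ and $Y$ so obtained range over dense open complements of finite unions of proper subspaces, the principal isotropy is nontrivial. You instead run a contrapositive argument through Corollary~\ref{cor2}: if the principal isotropy algebra of $\chi_1$ were trivial, then $C_{\mathfrak{h}}(X)=0$ for a dense open set of $X\in\mathfrak{m}_1$, forcing $[X,\mathfrak{m}_2]\subset\mathfrak{m}_1$ for those $X$ and hence $[\mathfrak{m}_1,\mathfrak{m}_2]\subset\mathfrak{m}_1$ by density and continuity of the bracket — contradicting the hypothesis. Both approaches are sound; yours is slightly more economical because it reuses the already-established Corollary~\ref{cor2} and avoids re-invoking Theorem~\ref{two modules} directly, whereas the paper's is more explicit about which vectors work. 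One small point: your second paragraph (deriving that at least one of the two principal isotropy algebras is nontrivial, via Lemma~\ref{lemmasimple}) is superseded by your third paragraph, which handles each $\chi_i$ separately without Lemma~\ref{lemmasimple}; you could delete it. Also note your Zariski-openness observation (the set of $X$ with $C_{\mathfrak{h}}(X)=0$ is open) is exactly the identification that makes ``principal isotropy trivial'' equivalent to ``$C_{\mathfrak{h}}(X)=0$ on a dense open set,'' which is the dictionary needed to pass from the pointwise Corollary~\ref{cor2} to the statement about the whole module.
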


\begin{proof}
Let $\mathfrak{m}_{2,1}:=\{X\in \mathfrak{m}_2\,|\, [\mathfrak{m}_1, X]\subset \mathfrak{m}_1\}$ and
$\mathfrak{m}_{2,2}:=\{X\in \mathfrak{m}_2\,|\, [\mathfrak{m}_1, X]\subset \mathfrak{m}_2\}$. Clearly $\mathfrak{m}_{2,1}$ and $\mathfrak{m}_{2,2}$
are subspaces of $\mathfrak{m}_2$ of codimension
$\geq 1$. Fix $0\not=X\in \mathfrak{m}_2 \setminus (\mathfrak{m}_{2,1} \cup \mathfrak{m}_{2,2})$
and define $\mathfrak{m}_{1,1}(X):=\{Y\in \mathfrak{m}_1\,|\, [Y, X]\subset \mathfrak{m}_1\}$ and
$\mathfrak{m}_{1,2}(X):=\{Y\in \mathfrak{m}_1\,|\, [Y, X]\subset \mathfrak{m}_2\}$.
Similarly, we know that  $\mathfrak{m}_{1,1}$ and $\mathfrak{m}_{1,2}$
are subspaces of $\mathfrak{m}_1$ of codimension $\geq 1$.
Then for every $0\not=Y\in \mathfrak{m}_1 \setminus (\mathfrak{m}_{1,1} \cup \mathfrak{m}_{1,2})$, $[X,Y]\not \in \mathfrak{m}_1$
and $[X,Y]\not \in \mathfrak{m}_2$. By Theorem \ref{two modules}, there are a unique
$Z_X \in \widetilde{C}_{\mathfrak{h}}(X+Y)\cap C_{\mathfrak{h}}(X)$ and a unique
$Z_Y \in \widetilde{C}_{\mathfrak{h}}(X+Y)\cap C_{\mathfrak{h}}(Y)$
such that
$[X,Y]=[Z_Y,X]+[Z_X,Y]$. Clearly $Z_X\neq 0$ and $Z_Y \neq 0$. It implies that
$\dim (C_{\mathfrak{h}}(X))\geq 1$ and $\dim (C_{\mathfrak{h}}(Y))\geq 1$.
That is, the proposition holds.
\end{proof}

\begin{prop}\label{piar2}
If $[\mathfrak{m}_1,\mathfrak{m}_2]\subset \mathfrak{m}_2$ and $[\mathfrak{m}_1,\mathfrak{m}_2]\neq 0$, then
the principal isotropy group of the representation $\chi_1$
has dimensions $\geq 1$, in particular, the principal isotropy algebra of this representation is not trivial.
\end{prop}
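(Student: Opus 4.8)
The plan is to reduce to the situation handled by Proposition \ref{piar1}'s argument, but now with only the summand $\mathfrak{m}_1$ in play. The key observation is that the hypothesis $[\mathfrak{m}_1,\mathfrak{m}_2]\subset \mathfrak{m}_2$ forces $[\mathfrak{m}_2,\mathfrak{m}_2]\subset \mathfrak{h}\oplus\mathfrak{m}_1$ to be examined: since the metric with $A_{\lambda,\mu}$, $\lambda\ne\mu$, is geodesic orbit, Corollary \ref{cor2} applies whenever some $X\in\mathfrak{m}_1$ has $C_{\mathfrak{h}}(X)\subset C_{\mathfrak{h}}(Y)$ for $Y\in\mathfrak{m}_2$, giving $[X,Y]\in\mathfrak{m}_1$; combined with the hypothesis $[X,Y]\in\mathfrak{m}_2$ this yields $[X,Y]=0$. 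So the strategy is: first argue that if the principal isotropy algebra of $\chi_1$ were trivial, then $C_{\mathfrak{h}}(X)=0$ for $X$ in an open dense subset of $\mathfrak{m}_1$, and for such $X$ the inclusion $C_{\mathfrak{h}}(X)=0\subset C_{\mathfrak{h}}(Y)$ is automatic for every $Y\in\mathfrak{m}_2$.

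The main steps, in order, are as follows. First I would recall that the principal isotropy algebra of $\chi_1$ being trivial means exactly that $C_{\mathfrak{h}}(X)=0$ for all $X$ in some nonempty $\Ad(H)$-invariant open subset $U\subset\mathfrak{m}_1$ (the stationary subalgebra of points in general position is trivial). Second, by Corollary \ref{cor2} applied with $i=1$, $j=2$: for any such $X\in U$ we get $[X,\mathfrak{m}_2]\subset\mathfrak{m}_1$. But by hypothesis $[X,\mathfrak{m}_2]\subset[\mathfrak{m}_1,\mathfrak{m}_2]\subset\mathfrak{m}_2$, and $\mathfrak{m}_1\cap\mathfrak{m}_2=0$, so $[X,\mathfrak{m}_2]=0$ for all $X\in U$. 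Third, since $U$ is open in the vector space $\mathfrak{m}_1$ and the bracket $[\cdot,\mathfrak{m}_2]\colon\mathfrak{m}_1\to\End(\mathfrak{g})$ (or rather $\mathfrak{m}_1\otimes\mathfrak{m}_2\to\mathfrak{g}$) is linear (hence continuous), the vanishing on the open set $U$ propagates to all of $\mathfrak{m}_1$: $[\mathfrak{m}_1,\mathfrak{m}_2]=0$. This contradicts the standing hypothesis $[\mathfrak{m}_1,\mathfrak{m}_2]\neq 0$. Therefore the principal isotropy algebra of $\chi_1$ is nontrivial, and since $H$ is compact its principal isotropy group then has dimension $\geq 1$.

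The step I expect to be the only real subtlety is the passage from ``$C_{\mathfrak{h}}(X)=0$ on a dense open set'' to ``$[X,\mathfrak{m}_2]=0$ on that set'': one must make sure Corollary \ref{cor2} is being invoked with the correct roles of $\mathfrak{m}_i$ and $\mathfrak{m}_j$, namely $X\in\mathfrak{m}_1=\mathfrak{m}_i$ with trivial centralizer forces $[X,\mathfrak{m}_j]=[X,\mathfrak{m}_2]\subset\mathfrak{m}_i=\mathfrak{m}_1$, which is exactly the ``in particular'' clause of that corollary. Everything else — the density/openness of the set of points in general position, the linearity argument closing off $\mathfrak{m}_1$, and the compactness of $H$ turning a nontrivial isotropy algebra into a positive-dimensional isotropy group — is routine. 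One should also note at the outset that $[\mathfrak{m}_1,\mathfrak{m}_2]\not\subset\mathfrak{m}_1$ here (else by the hypothesis $[\mathfrak{m}_1,\mathfrak{m}_2]\subset\mathfrak{m}_1\cap\mathfrak{m}_2=0$, contradicting $[\mathfrak{m}_1,\mathfrak{m}_2]\neq0$), so the configuration is genuinely the ``mixed'' one and Corollary \ref{cor2} is the right tool rather than a triviality.
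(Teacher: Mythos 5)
Your proof is correct, and it is the contrapositive of the paper's direct argument, using Corollary \ref{cor2} as the workhorse instead of invoking Theorem \ref{two modules} from scratch. The paper proceeds directly: it writes $[X,Y]=[Z_Y,X]+[Z_X,Y]$ via Theorem \ref{two modules} with $X\in\mathfrak{m}_2$, $Y\in\mathfrak{m}_1$, observes that $[\mathfrak{m}_1,\mathfrak{m}_2]\subset\mathfrak{m}_2$ forces $Z_X\in C_{\mathfrak{h}}(Y)$ hence $Z_X=0$, then fixes a specific $X\in\mathfrak{m}_2$ with $[\mathfrak{m}_1,X]\neq 0$ and concludes that $Z_Y\neq 0$, so $C_{\mathfrak{h}}(Y)\neq 0$, for every $Y$ outside the proper subspace $\{Y\in\mathfrak{m}_1:[Y,X]=0\}$. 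You instead assume triviality of the principal isotropy algebra of $\chi_1$, use Corollary \ref{cor2} (which already has the $Z_X=0$ step baked in) to conclude $[X,\mathfrak{m}_2]\subset\mathfrak{m}_1\cap\mathfrak{m}_2=0$ for $X$ in a nonempty open set, and then use bilinearity to propagate to $[\mathfrak{m}_1,\mathfrak{m}_2]=0$, contradicting the hypothesis. Both rest on the same key fact, but your phrasing buys a small efficiency: you never need to fix a specific element of $\mathfrak{m}_2$, and the final contradiction drops out from linearity rather than from a pointwise count. The only point worth double-checking in your write-up — and you did flag it — is the orientation of indices in Corollary \ref{cor2}: with $X\in\mathfrak{m}_1=\mathfrak{m}_i$ and $C_{\mathfrak{h}}(X)=0$, the conclusion $[X,\mathfrak{m}_j]\subset\mathfrak{m}_i$ lands the bracket in $\mathfrak{m}_1$, exactly opposite to the standing hypothesis $[\mathfrak{m}_1,\mathfrak{m}_2]\subset\mathfrak{m}_2$, which is what makes the argument close.
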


\begin{proof}
By Theorem \ref{two modules}, for any $X\in \mathfrak{m}_2$, $Y\in \mathfrak{m}_1$,
there is a unique $Z_X \in \widetilde{C}_{\mathfrak{h}}(X+Y)\cap C_{\mathfrak{h}}(X)$ and a unique
$Z_Y \in \widetilde{C}_{\mathfrak{h}}(X+Y)\cap C_{\mathfrak{h}}(Y)$
such that
$[X,Y]=[Z_Y,X]+[Z_X,Y]$. Clearly $Z_X\in C_{\mathfrak{h}}(Y)$ (since $[X,Y]\subset \mathfrak{m}_2$), hence $Z_X=0$, and $[X,Y]=[Z_Y,X]$.
\smallskip

Let $\mathfrak{m}_2':=\{X\in \mathfrak{m}_2\,|\, [\mathfrak{m}_1, X]= 0\}$. Clearly $\mathfrak{m}_2'$ is a subspace of $\mathfrak{m}_2$ of codimension
$\geq 1$. Fix $X\in \mathfrak{m}_2 \setminus \mathfrak{m}_2'$ and define $\mathfrak{m}_1'=\mathfrak{m}_1(X):=\{Y\in \mathfrak{m}_1\,|\, [Y, X]= 0\}$.
Similarly $\mathfrak{m}_1'$ is a subspace of $\mathfrak{m}_1$ of codimension $\geq 1$.
Now for all $Y\in\mathfrak{m}_1 \setminus \mathfrak{m}_1'$, $[X,Y]\neq 0$. Thus $0\not=Z_Y \in  C_{\mathfrak{h}}(Y)$, therefore, $\dim (C_{\mathfrak{h}}(Y))\geq 1$.
It implies that the principal isotropy group of the representation $\chi_1$
has dimensions $\geq 1$, in particular, the principal isotropy algebra of this representation is not trivial.
\end{proof}

\begin{remark}
Note that Propositions \ref{piar1} and \ref{piar2}  follows also from Corollary 5 in \cite{Nik2016n}.
\end{remark}

Since the assertion of Proposition \ref{piar2} is less restrictive than assertion of Proposition~\ref{piar1}, we consider an additional construction.
Under the assumptions of Proposition \ref{piar2}, $\mathfrak{k}:=\mathfrak{h}\oplus \mathfrak{m}_1$ is a Lie subalgebra in $\mathfrak{g}$.
Denote by $K$ the corresponding connected subgroup in $G$. It could be defined as the unit component of the normalizer of $\mathfrak{k}$ in $G$,
hence, it is a closed subgroup.
Let us consider the isotropy representation $\eta: K \to O(\mathfrak{m}_2)$ for the homogeneous space $G/K$.

\begin{prop}\label{piar3}
If $[\mathfrak{m}_1,\mathfrak{m}_2]\subset \mathfrak{m}_2$ and $[\mathfrak{m}_1,\mathfrak{m}_2]\neq 0$, then
the principal isotropy group of the representation $\eta$
has dimension $\geq \dim (\mathfrak{m}_1)$, in particular, the principal isotropy algebra of this representation is not trivial.
\end{prop}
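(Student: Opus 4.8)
The plan is to show that $\mathfrak{m}_1$ sits inside the isotropy algebra of a point in general position for the representation $\eta$, so that the principal isotropy algebra has dimension at least $\dim(\mathfrak{m}_1)$. First I would record what $\eta$ is concretely: since $\mathfrak{k}=\mathfrak{h}\oplus\mathfrak{m}_1$ and $[\mathfrak{m}_1,\mathfrak{m}_2]\subset\mathfrak{m}_2$, the space $\mathfrak{m}_2$ is an $\ad(\mathfrak{k})$-invariant complement to $\mathfrak{k}$ in $\mathfrak{g}$, and $\eta$ is (up to equivalence) the representation $U\mapsto \ad(U)|_{\mathfrak{m}_2}$ for $U\in\mathfrak{k}$. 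The stationary subalgebra at a point $Y\in\mathfrak{m}_2$ is therefore $C_{\mathfrak{k}}(Y)=\{U\in\mathfrak{k}\,|\,[U,Y]=0\}$, and I want to prove that for $Y$ in general position this centralizer contains a copy of $\mathfrak{m}_1$ of full dimension.

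The key step is the following claim: for every $Y\in\mathfrak{m}_2$, the map $\mathfrak{m}_1\to\mathfrak{m}_2$, $W\mapsto[W,Y]$, has image contained in a fixed ``small'' subspace, and more precisely, one can use the geodesic-orbit structure to produce, for generic $Y$, a $\mathfrak{k}$-centralizing copy of $\mathfrak{m}_1$. Concretely: from the proof of Proposition~\ref{piar2}, for any $X\in\mathfrak{m}_2$ and $Y\in\mathfrak{m}_1$ the geodesic-orbit equation forces $Z_X=0$, i.e.\ $[X,Y]=[Z_Y,X]$ with $Z_Y\in C_{\mathfrak{h}}(Y)$; equivalently $[X,Y-Z_Y]=0$, so $Y-Z_Y\in\mathfrak{h}\oplus\mathfrak{m}_1=\mathfrak{k}$ centralizes $X$. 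Thus for each generic $X\in\mathfrak{m}_2$ the linear map $\mathfrak{m}_1\to\mathfrak{k}$ sending $Y\mapsto Y-\Xi(X+Y)$-type correction lands in $C_{\mathfrak{k}}(X)$; I would check this map is injective (its kernel consists of $Y\in\mathfrak{m}_1$ with $Y\in\mathfrak{h}$, which is $0$), so $\dim C_{\mathfrak{k}}(X)\ge\dim(\mathfrak{m}_1)$ for all $X$ outside the proper subset where the geodesic graph is not linear in $Y$. Since the principal isotropy algebra of $\eta$ is the minimal one among $\{C_{\mathfrak{k}}(X)\}_{X\in\mathfrak{m}_2}$ and is attained on a dense open set, this lower bound on the dimension passes to the principal isotropy algebra.

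The main obstacle I anticipate is making the passage from ``$\dim C_{\mathfrak{k}}(X)\ge\dim(\mathfrak{m}_1)$ for $X$ in a dense set'' to ``the principal isotropy algebra has dimension $\ge\dim(\mathfrak{m}_1)$'' fully rigorous: the dimension of the stabilizer is upper semicontinuous, so it could a priori \emph{drop} on the principal (open dense) stratum rather than on the exceptional one. I would resolve this by arguing in the other direction: the linear-in-$Y$ correction $Y\mapsto Y-Z_Y$ is defined for \emph{all} $X\in\mathfrak{m}_2$ (Theorem~\ref{two modules} gives the required $Z_Y\in C_{\mathfrak{h}}(Y)$ for every $X$, not merely generic $X$, once $[\mathfrak{m}_1,\mathfrak{m}_2]\subset\mathfrak{m}_2$), so in fact $\dim C_{\mathfrak{k}}(X)\ge\dim(\mathfrak{m}_1)$ holds for \emph{every} $X\in\mathfrak{m}_2$. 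A uniform lower bound over all points is automatically a lower bound on the principal isotropy algebra, which finishes the proof; the non-triviality statement then follows because $\mathfrak{m}_1\neq 0$ by Lemma~\ref{lemmasimple} (as $[\mathfrak{m}_1,\mathfrak{m}_2]\neq0$ forces $\mathfrak{m}_1\neq0$).
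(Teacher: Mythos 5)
Your proposal follows the paper's proof in all essentials: apply the degenerate form of Theorem~\ref{two modules} from the proof of Proposition~\ref{piar2} (with $Z_X=0$), use the resulting identity to absorb $Y$ into the centralizer $C_{\mathfrak{k}}(X)$, and conclude $\dim C_{\mathfrak{k}}(X)\ge\dim\mathfrak{m}_1$ for every $X\in\mathfrak{m}_2$, hence in particular for $X$ in general position. Your explicit remark that the bound holds for \emph{all} $X$, not just generic $X$, is exactly the point that makes the passage to the principal isotropy algebra immediate; the paper leaves this implicit.

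Two small local issues worth fixing. First, a sign slip: from $[X,Y]=[Z_Y,X]$ one gets $[X,Y]+[X,Z_Y]=0$, i.e.\ $[X,\,Y+Z_Y]=0$, so it is $Y+Z_Y$ (as in the paper), not $Y-Z_Y$, that lies in $C_{\mathfrak{k}}(X)$; the dimension count is unaffected. Second, you frame the dimension bound via injectivity of the map $Y\mapsto Y\mp Z_Y$, implicitly treating it as linear; in fact the uniqueness in Theorem~\ref{two modules} gives a well-defined $Z_Y=Z_Y(X,Y)\in\mathfrak{h}$ but not automatically a linear dependence on $Y$. This does not matter: the image of the map lies in the \emph{subspace} $C_{\mathfrak{k}}(X)$, and since each $Y+Z_Y$ has $\mathfrak{m}_1$-component exactly $Y$, the image projects onto all of $\mathfrak{m}_1$, so its span inside $C_{\mathfrak{k}}(X)$ already has dimension at least $\dim\mathfrak{m}_1$. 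With these two points tightened, the argument is the same as the paper's.
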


\begin{proof} For any $X \in \mathfrak{m}_2$, $Y \in \mathfrak{m}_1$, by the proof of Proposition \ref{piar2}, there is a unique
$Z_Y \in \widetilde{C}_{\mathfrak{h}}(X+Y)\cap C_{\mathfrak{h}}(Y)$ such that $[X,Y]=[Z_Y,X]$.
Therefore, $[Y+Z_Y, X]=0$ and $Y+Z_Y \in C_{\mathfrak{k}} (X)$, where $C_{\mathfrak{k}} (X)$ is the centralizer of $X$ in
the Lie algebra $\mathfrak{k}=\mathfrak{h}\oplus \mathfrak{m}_1$. It follows that $\dim C_{\mathfrak{k}} (X) \geq \dim(\mathfrak{m}_1)$.
That is, the principal isotropy group of the representation $\eta$ has dimension $\geq \dim (\mathfrak{m}_1)$.
\end{proof}

\begin{theorem}\label{theolast}
Assume that $G/H$ is a compact simply connected homogeneous space with $G$ simple and there are two components in the isotropy representation.
If $G/H$ is geodesic orbit, there exists $K\subset G$ such that $H\subset K\subset G$, $\dim H <\dim K <\dim G$, and $G/K$ is symmetric.
\end{theorem}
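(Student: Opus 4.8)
The plan is to split into cases according to how $[\mathfrak{m}_1,\mathfrak{m}_2]$ sits relative to $\mathfrak{m}_1$ and $\mathfrak{m}_2$, then in each case produce an intermediate subalgebra $\mathfrak{k}$ with $\mathfrak{h}\subsetneq\mathfrak{k}\subsetneq\mathfrak{g}$ and $(\mathfrak{g},\mathfrak{k})$ symmetric. By Lemma \ref{lemmasimple} we know $[\mathfrak{m}_1,\mathfrak{m}_2]\neq 0$, so the only possibilities are: (a) $[\mathfrak{m}_1,\mathfrak{m}_2]\subset\mathfrak{m}_2$ (or symmetrically $\subset\mathfrak{m}_1$); (b) $[\mathfrak{m}_1,\mathfrak{m}_2]\not\subset\mathfrak{m}_1$ and $[\mathfrak{m}_1,\mathfrak{m}_2]\not\subset\mathfrak{m}_2$. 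First I would dispose of case (a). If, say, $[\mathfrak{m}_1,\mathfrak{m}_2]\subset\mathfrak{m}_2$ and $[\mathfrak{m}_1,\mathfrak{m}_1]\subset\mathfrak{h}\oplus\mathfrak{m}_1$ (which one checks, since $[\mathfrak{m}_1,\mathfrak{m}_1]\cap\mathfrak{m}_2$ would generate an ideal by the Lemma 7 of \cite{Nik2015} argument used in Lemma \ref{lemmasimple}), then $\mathfrak{k}:=\mathfrak{h}\oplus\mathfrak{m}_1$ is a subalgebra; $[\mathfrak{m}_2,\mathfrak{m}_2]\subset\mathfrak{k}$ would then make $(\mathfrak{g},\mathfrak{k})$ symmetric, and the inclusions are proper because both $\mathfrak{m}_1,\mathfrak{m}_2\neq 0$ and $\mathfrak{m}_2\neq 0$ respectively. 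If instead $[\mathfrak{m}_2,\mathfrak{m}_2]\not\subset\mathfrak{k}$, one uses $\Ad(H)$-irreducibility of $\mathfrak{m}_2$ to show $[\mathfrak{m}_2,\mathfrak{m}_2]\cap\mathfrak{m}_1$ is all of $\mathfrak{m}_1$ or $0$, and handles the residual configuration directly (it forces $\mathfrak{g}$ to split, contradicting simplicity unless one is back in the symmetric situation).

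The substantive case is (b). Here Proposition \ref{piar1} gives that both isotropy subrepresentations $\chi_1,\chi_2$ have principal isotropy algebra of dimension $\geq 1$. The idea is to exploit the nontrivial principal stabilizer of $\chi_1$: let $\mathfrak{s}$ be a principal isotropy algebra for $\chi_1$, so $\mathfrak{s}\subset\mathfrak{h}$ acts trivially on a nonempty open set of $\mathfrak{m}_1$, hence $[\mathfrak{s},\mathfrak{m}_1]=0$. Then $\mathfrak{s}\oplus\mathfrak{m}_1$ is $\ad(\mathfrak{s})$-invariant and in fact $[\mathfrak{m}_1,\mathfrak{m}_1]\subset C_{\mathfrak{h}}(\mathfrak{s})\oplus\mathfrak{m}_1$. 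More usefully, I would pass to $\mathfrak{k}:=N_{\mathfrak{g}}(\mathfrak{m}_1+[\mathfrak{m}_1,\mathfrak{m}_1])$ or consider the subalgebra generated by $\mathfrak{h}$ and $\mathfrak{m}_1$; the point is that the nontrivial principal stabilizer, combined with the classification in \cite{Hsiang} and \cite{Hsiangbook} (\S3--5 of Chapter 5) of representations with nontrivial principal isotropy, severely restricts $(\mathfrak{g},\mathfrak{h},\chi_1,\chi_2)$, and every surviving configuration is one where $\mathfrak{h}$ embeds in a proper symmetric subalgebra $\mathfrak{k}$. Concretely, Tamaru's classification \cite{Ta} of GO-spaces fibered over irreducible symmetric spaces provides the target list, and the task is to show that case (b) always lands there.

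The main obstacle I anticipate is case (b): ruling out the possibility that $\mathfrak{h}$ is \emph{maximal} in $\mathfrak{g}$ (class (i) in the Dickinson--Kerr dichotomy) while still being geodesic orbit. The geodesic-orbit hypothesis only yields the \emph{dimension} bound $\dim C_{\mathfrak{h}}(X)\geq 1$ for generic $X$, not an a priori subalgebra; turning this representation-theoretic constraint into the existence of an intermediate $K$ is where the real work lies. I would argue it by contradiction: assume no such $K$ exists, so by \cite{DickKerr} the pair is one of the finitely many with $H$ maximal; then for each such pair, either the principal isotropy algebra of $\chi_1$ (or $\chi_2$) is actually trivial — contradicting Proposition \ref{piar1} (case (b)) or Proposition \ref{piar2} (case (a)) — or a more refined use of Theorem \ref{two modules} and Corollary \ref{cor3}, together with the explicit brackets, produces a contradiction. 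Thus every geodesic orbit $G/H$ of class (i) is excluded, so $H$ lies in a proper intermediate $K$; a final check, again using Theorem \ref{two modules} (specifically that $[\mathfrak{m}_2,\mathfrak{m}_2]\subset\mathfrak{h}\oplus\mathfrak{m}_1=\mathfrak{k}$ when the geodesic graph forces the relevant $Z$-components to vanish on a generic pair), shows $(\mathfrak{g},\mathfrak{k})$ is symmetric, and the strict inequalities $\dim H<\dim K<\dim G$ hold because $\mathfrak{m}_1\neq 0$ and $\mathfrak{m}_2\neq 0$.
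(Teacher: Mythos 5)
Your overall framework is close to the paper's: use the Dickinson--Kerr dichotomy (maximal $H$ versus intermediate $K$), rule out the maximal case with Proposition~\ref{piar1} plus the Hsiang classification of nontrivial principal isotropy algebras, and then pass to an intermediate $K$. The paper proceeds the same way, discharging the eighteen maximal pairs in its Table~1 exactly as you anticipate.

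However, there is a genuine gap in how you handle the non-maximal case. You assert that once $\mathfrak{k}=\mathfrak{h}\oplus\mathfrak{m}_1$ is a subalgebra, either $[\mathfrak{m}_2,\mathfrak{m}_2]\subset\mathfrak{k}$ (so $(\mathfrak{g},\mathfrak{k})$ is symmetric), or the $\Ad(H)$-irreducibility of $\mathfrak{m}_2$ ``forces $\mathfrak{g}$ to split.'' The second alternative is false: if the $\mathfrak{m}_2$-component of $[\mathfrak{m}_2,\mathfrak{m}_2]$ is nonzero, irreducibility of $\mathfrak{m}_2$ under $\Ad(H)$ only tells you that this component spans $\mathfrak{m}_2$, and one simply obtains a non-symmetric isotropy-irreducible pair $(\mathfrak{g},\mathfrak{k})$ with $\mathfrak{g}$ still simple. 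The paper's Table~2 lists twenty-one concrete such configurations (e.g.\ $SO(3)\subset G_2\subset SO(7)$, $G_2\times Sp(1)\subset SO(7)\times Sp(1)\subset Sp(7)$, $SO(3)SU(3)\subset(SU(3))^2\subset F_4$, etc.), and each of them must be eliminated individually. The tool for that is \emph{not} a generic vanishing of $Z$-components forced by Theorem~\ref{two modules}, as you suggest; rather it is Propositions~\ref{piar2} and~\ref{piar3}, which give explicit lower bounds $\dim C_{\mathfrak{h}}(Y)\geq 1$ and, more sharply, $\dim C_{\mathfrak{k}}(X)\geq\dim\mathfrak{m}_1$ for the $K$-representation on $\mathfrak{m}_2$, combined with the Hsiang tables to bound the possible principal isotropy dimensions from above. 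This quantitative comparison of $\dim\mathfrak{m}_1$ against the Hsiang principal isotropy dimensions is what actually kills every row of Table~2, and it is the piece of the argument your proposal does not supply.
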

\begin{proof}
Firstly, we will prove that $H$ is not a maximal connected subgroup in $G$ under the assumption.
In Table 1, we list the cases given in \cite{DickKerr} such that
$H$ is maximal in $G$. These homogeneous spaces satisfy the conditions of Proposition~\ref{piar1}.
By principal isotropy algebras corresponding to the tables in Pages 197 and 202 of the paper \cite{Hsiang}, it is easy to see that
at least one of the principal isotropy algebras of the representations $\chi_1$ and $\chi_2$ is trivial. By Proposition~\ref{piar1}, it is impossible.
\smallskip

{\small
\renewcommand{\arraystretch}{1.5}
\begin{table}[t]
{\bf Table 1. }
\begin{center}
\begin{tabular}{|c|c|c|}
\hline
N & $H\subset G$ & $\chi=\chi_1+\chi_2 $   \\
\hline\hline
V.1 & $SO(m)\times SO(n)\subset SO(mn)$   & $\pi_2\otimes \pi_1^2\oplus \pi_1^2\otimes\pi_2$   \\
 & $(n,m\geq 3,n,m\not=4)$  &   \\ \hline
V.2 & $Sp(m)\times Sp(n)\subset SO(4mn)$   & $\pi_1^2\otimes\pi_2\oplus\pi_2\otimes \pi_1^2$  \\
 & $(n,m\geq 2)$  &   \\ \hline
V.3 & $Sp(6)\subset SO(429)$   & $\pi_5^2\oplus\pi_3^2$   \\ \hline
V.4 & $SU(12)\subset SO(924)$   & $\pi_3\pi_9\oplus\pi_5\pi_7$  \\ \hline
V.5 & $Spin(24)\subset SO(2048)$   & $\pi_6\oplus\pi_{10}$  \\ \hline
V.6 & $SU(n)\subset SU\left(\frac{n(n+1)(n+2)}{6}\right)$   & $\pi_1^2\pi^2_{n-1}\oplus \pi_1^3\pi^3_{n-1}$  \\
 & $n\geq 2$  &   \\ \hline
V.7 & $SU(n)\subset SU\left(\frac{n(n-1)(n-2)}{6}\right)$   & $\pi_2\pi_{n-2}\oplus \pi_3\pi_{n-3}$   \\
 & $n\geq 6$  &   \\ \hline
V.8 & $Spin(14)\subset SU(64)$   & $\pi_4\oplus\pi_6\pi_7$ \\ \hline
V.9 & $SO(m)\times Sp(n)\subset Sp(mn)$   & $\pi_1^2\otimes\pi_1^2\oplus\pi_2\otimes \pi_2$  \\
 & $(m\geq 3,m\not=4,n\geq 2)$  &   \\ \hline
V.10 & $Sp(1)\subset Sp(3)$   & $\pi^6\oplus\pi^{10}$  \\ \hline
V.11 & $Spin(13)\subset Sp(32)$   & $\pi_3\oplus\pi_{6}^2$  \\ \hline
V.12 & $Spin(20)\subset Sp(256)$   & $\pi_6\oplus\pi_{10}^2$   \\ \hline
V.13 & $SU(10)\subset Sp(126)$   & $\pi_5^2\oplus\pi_{3}\pi_7$  \\ \hline
V.14 & $Sp(4)\subset Sp(24)$   & $\pi_3^2\oplus\pi_{1}\pi_3$   \\ \hline
V.15 & $Sp(5)\subset Sp(66)$   & $\pi_3^2\oplus\pi_{5}^2$   \\ \hline
V.16 & $SU(2)\times G_2\subset E_7$   & $\pi^2\otimes\pi_2^2\oplus\pi^4\otimes\pi_{2}$  \\ \hline
V.17 & $SU(5)SU(5)\subset E_8$   & $[\pi_1\otimes\pi_3]_{\mathbb R}\oplus[\pi_2\otimes\pi_1]_{\mathbb R}$  \\ \hline
V.18 & $Sp(2)\subset E_8$   & $\pi_1^2\pi_2^3\oplus\pi_1^6$  \\ \hline
\end{tabular}
\end{center}
\end{table}
}

That is, $H$ is not maximal connected in $G$. Then there exists connected subgroup $K\subset G$ such that $H\subset K\subset G$,
$\dim H <\dim K <\dim G$.
In Table 2, we will list the cases in \cite{DickKerr} satisfying $\mathfrak{k}:= \mathfrak{h}\oplus \mathfrak{m}_1$ is a subalgebra
of $\mathfrak{g}$ and $(\mathfrak{g},\mathfrak{k})$ is not a symmetric pair but irreducible.

{\small
\renewcommand{\arraystretch}{1.4}
\begin{table}[t]
{\bf Table 2.}
\begin{center}
\begin{tabular}{|c|c|c|}
\hline
N & $H\subset K\subset G$ &  $\dim {\mathfrak m_1}$  \\
\hline\hline
I.20 & $Sp(1)\times Sp(3)\subset  Sp(1)\times Sp(7)\subset SO(28)$ &    84 \\ \hline
I.21 & $Sp(1)\times SU(6)\subset  Sp(1)\times Sp(10)\subset SO(40)$ &   175  \\ \hline
I.22 & $Sp(1)\times Spin(12)\subset  Sp(1)\times Sp(16)\subset SO(64)$ &    462  \\ \hline
I.23 & $Sp(1)\times E_7\subset  Sp(1)\times Sp(28)\subset SO(112)$ &   1463  \\ \hline
I.29 & $SO(3)\subset  G_2\subset SO(7)$ &   11 \\ \hline
III.9 & $G_2\times Sp(1)\subset  SO(7)\times Sp(1)\subset  Sp(7) $    & 7  \\ \hline
III.10 & $Spin(7)\times Sp(1)\subset  SO(8)\times Sp(1)\subset  Sp(8) $   & 7 \\ \hline
III.11 & $Spin(11)\subset  Spin(12)\subset  Sp(16) $ &  11 \\ \hline
IV.3 & $SO(3)SU(3)\subset  (SU(3))^2\subset  F_4$ & 5 \\ \hline
IV.6 & $SU(2)\times SO(3)\subset  SU(2)\times G_2\subset  F_4$ & 11 \\ \hline
IV.13 & $SO(3)(SU(3))^2\subset  (SU(3))^3\subset  E_6$ &    5 \\ \hline
IV.18 & $SU(3)SO(3)\subset  SU(3)G_2\subset  E_6$ &   11 \\ \hline
IV.30 & $SO(3)SU(6)\subset  SU(3)SU(6)\subset  E_7$ &    5 \\ \hline
IV.31 & $SU(3)SO(6)\subset  SU(3)SU(6)\subset  E_7$ &    20 \\ \hline
IV.32 & $SU(3)SU(3)\subset  SU(3)SU(6)\subset  E_7$ &   27  \\ \hline
IV.33 & $SO(3)Sp(3)\subset  G_2Sp(3)\subset  E_7$ &   11 \\ \hline
IV.41 & $SO(3)E_6\subset  SU(3)E_6\subset  E_8$ &     5\\ \hline
IV.42 & $SU(3)\times Sp(4)\subset  SU(3)\times E_6\subset  E_8$ &    42 \\ \hline
IV.43 & $SU(3)\times G_2\subset  SU(3)\times E_6\subset  E_8$ &   64
 \\  \hline
IV.44 & $SU(3)\times SU(3)\subset  SU(3)\times E_6\subset  E_8$   & 70 \\ \hline
IV.48 & $SO(3)\times F_4\subset  G_2\times F_4\subset  E_8$ & 11 \\ \hline
\end{tabular}
\end{center}
\end{table}
}
\smallskip

Directly by Propositions~\ref{piar2} and~\ref{piar3}, $\dim \mathfrak m_1$ in Table 2 and the principal isotropy group of the representation in Pages 197 and 202
of the paper \cite{Hsiang}, it is easy to know that all the cases in Table 2 are not geodesic orbit.
\smallskip

In fact, by Proposition~\ref{piar2} and the principal isotropy group of the representation of $Spin(11)$ in Pages 197 of the paper \cite{Hsiang}, we know that
$III.11$ is not geodesic orbit.
\smallskip

If $K=Sp(n)\times Sp(1)$, the maximal dimension of possible principal isotropy group of the representation of $K$ is $(n-1)(2n-1)+3=\dim Sp(n-1)+3+\dim Sp(1)$.
Comparing to $\dim \mathfrak m_1$ in Table 2, by proposition~\ref{piar3}, $I.20$ and $I.21$ are not geodesic orbit. For this compact semisimple Lie group,
if the maximal dimension of possible principal isotropy group of the representation of $K$ is non-trivial. Then the representation of $Sp(n)$ must be $\mu_n$
with the dimension $2n$. Thus the representation of $K$ on $\mathfrak m_2$ must the multiple of $2n$. For $I.22$ and $I.23$, the dimensions are 1485 and 4617.
That is, $I.22$ and $I.23$ are not geodesic orbit.
\smallskip

If $K=G_2$, the maximal dimension of possible principal isotropy group of the representation of $K$ is $8=\dim SU(3)$. But $\dim {\mathfrak m_1}=11$ by Table 2.
By proposition~\ref{piar3}, $I.29$ is not geodesic orbit.
\smallskip

If $K=SO(n)\times SO(3)$, the maximal dimension of possible principal isotropy group of the representation of $K$ is $3=\dim SU(2)$. Comparing to
$\dim \mathfrak m_1$ in Table 2, by proposition~\ref{piar3}, $III.9$ and $III.10$ are not geodesic orbit.
\smallskip

If $K=(SU(3))^3$, the maximal possible principal isotropy group of the representation of $K$ is $SU(2)$, which is 3-dimensional.
Comparing to $\dim \mathfrak m_1$ in Table 2, by proposition~\ref{piar3}, $IV.13$ is not geodesic orbit.
\smallskip

If $K=SU(n)\times G_2$, the maximal possible principal isotropy group of the representation of $K$ is $SU(n-1)$. Comparing to $\dim \mathfrak m_1$ in Table 2,
by proposition~\ref{piar3}, $IV.6$ and $IV.18$ are not geodesic orbit.
\smallskip

If $K=SU(3)\times SU(6)$, the maximal possible principal isotropy group of the representation of $K$ is $SU(3)\times T^2$, which is 10-dimensional.
Comparing to $\dim \mathfrak m_1$ in Table 2, by proposition~\ref{piar3}, $IV.30$, $IV.31$ and $IV.32$ are not geodesic orbit.
\smallskip

If $K=G_2\times Sp(3)$, the maximal possible principal isotropy group of the representation of $K$ is $Sp(2)$, which is 10-dimensional.
Comparing to $\dim \mathfrak m_1$ in Table 2, by proposition~\ref{piar3}, $IV.33$ is not geodesic orbit.
\smallskip

If $K=SU(3)\times E_6$, the maximal possible principal isotropy group of the representation of $K$ is $SU(2)$, which is 3-dimensional.
Comparing to $\dim \mathfrak m_1$ in Table 2, by proposition~\ref{piar3}, $IV.41$, $IV.42$, $IV.43$ and $IV.44$ are not geodesic orbit.
\smallskip

Since $G_2\times F_4$ doesn't appear in the table given in paper \cite{Hsiang}, we know that the principal isotropy group must be trivial.
It follows that $IV.48$ is not geodesic orbit.

In summary, we have the theorem.
\end{proof}

\smallskip

{\small
\renewcommand{\arraystretch}{1.3}
\begin{table}[t]
{\bf Table 3. The classification in \cite{Ta}.}
\begin{center}
\begin{tabular}{|c|c|c|c|}
\hline
$\mathfrak g$ & $\mathfrak k$ & $\mathfrak h$ &  \\
\hline\hline
$so(2n+1)$ & $so(2n)$ & $u(n)$ & $n\geq 2$   \\ \hline
$so(4n+1)$ & $so(4n)$ & $su(2n)$ & $n\geq 1$   \\ \hline
$so(8)$ & $so(7)$ & $g_2$ &    \\ \hline
$so(9)$ & $so(8)$ & $so(7)$ &    \\ \hline
$su(n+1)$ & $u(n)$ & $su(n)$ & $n\geq 2$   \\ \hline
$su(2n+1)$ & $u(2n)$ & $u(1)\oplus sp(n)$ & $n\geq 2$   \\ \hline
$su(2n+1)$ & $u(2n)$ & $sp(n)$ & $n\geq 2$   \\ \hline
$sp(n+1)$ & $sp(1)\oplus sp(n)$ & $u(1)\oplus sp(n)$ & $n\geq 1$   \\ \hline
$sp(n+1)$ & $sp(1)\oplus sp(n)$ & $sp(n)$ & $n\geq 1$   \\ \hline
$su(2r+n)$ & $su(r)\oplus su(r+n)\oplus \mathbb{R}$ & $su(r)\oplus su(r+n)$ & $r\geq 2,n\geq 1$   \\ \hline
$so(4r+2)$ & $u(2r+1)$ & $su(2r+1)$ & $r\geq 2$   \\ \hline
$e_6$ & $so(10)\oplus so(2)$ & $so(10)$ &    \\ \hline
$so(9)$ & $so(7)\oplus so(2)$ & $g_2\oplus so(2)$ &    \\ \hline
$so(10)$ & $so(8)\oplus so(2)$ & $spin(7)\oplus so(2)$ &    \\ \hline
$so(11)$ & $so(8)\oplus so(3)$ & $spin(7)\oplus so(3)$ &    \\ \hline
\end{tabular}
\end{center}
\end{table}
}
\smallskip

By Theorem~\ref{theolast}, we only need to discuss the case that there exists $K\subset G$ such that $H\subset K\subset G$ and $G/K$ is symmetric. In general, let $\mathfrak h\subset\mathfrak k$ be Lie algebras of $H\subset K$ respectively. Then we have the following
decomposition: $$\mathfrak g=\mathfrak k\oplus \mathfrak m_2=\mathfrak h\oplus \mathfrak m_1\oplus \mathfrak m_2.$$
Consider the following $G$-invariant inner product on $G/H$:
$$
g_{a,b}:=a\,\langle\cdot,\cdot\rangle|_{\mathfrak m_1}+b\,\langle\cdot,\cdot\rangle|_{\mathfrak m_2},
$$
which is as in (\ref{twopar1}).
Here if $g_{a,b}$ is geodesic orbit for $a\not=b$, then $g_{a,b}$ is geodesic orbit for any $a,b>0$ \cite{Ta} (this is generalized in Theorem \ref{two modules}).
Also the author gives in \cite{Ta}
the classification of $G/H$ satisfying that $G/H$ is a compact effective irreducible symmetric pair and the
metric $g_{a,b}$ is geodesic orbit for any $a,b>0$. The classification in \cite{Ta} doesn't need the condition that there are only two components
in the isotropy representation. Here we list the classification in \cite{Ta} in Table~3.
\smallskip

{\bf The proof of Theorem~\ref{maimres}.} According to Theorem \ref{theolast}, we only need to discuss the case that there exists $K\subset G$ such that
$H\subset K\subset G$ and $G/K$ is symmetric.
In order to complete Theorem~\ref{maimres}, we need to choose the homogeneous spaces from Table 3 such that there are only two components in the isotropy
representation. Comparing with the classification given in \cite{DickKerr} and taking into account Theorem \ref{two modules} and Proposition \ref{prop.zil},
we have Theorem~\ref{maimres}.
\medskip

{\bf Acknowledgements.}
The authors are indebted to Prof. Megan Kerr for helpful discussions concerning this paper. The work is partially supported by Grant 1452/GF4 of Ministry of Edu\-ca\-tion and Sciences of the Republic of Kazakhstan for 2015\,-2017 and NSF of China (No.~11571182).

\bigskip

\bibliographystyle{amsunsrt}

\vspace{10mm}

\end{document}